\newtheorem{theorem}{Theorem}[section]
\newtheorem{prop}[theorem]{Proposition}
\newtheorem{lemma}[theorem]{Lemma}
\newtheorem{cor}[theorem]{Corollary}
\theoremstyle{definition}
\newtheorem{defn}[theorem]{Definition}
\newtheorem{example}[theorem]{Example}
\newtheorem{remark}[theorem]{Remark}
\newtheorem{notation}[theorem]{Notation}
\newtheorem{conjecture}[theorem]{Conjecture}
\newtheorem{question}[theorem]{Question}
\def\X{\mathcal X}
\def\m{\mathcal M}
\def\R{\mathbb R}
\def\phi{\varphi}
\def\rho{\varrho}
\def\epsilon{\varepsilon}
\begin{document}

\newcounter{algnum}
\newcounter{step}
\newtheorem{alg}{Algorithm}

\newenvironment{algorithm}{\begin{alg}\end{alg}}

\title{How many inflections are there in the Lyapunov spectrum?}

\author{O. Jenkinson, M. Pollicott \& P. Vytnova}

\begin{abstract}
Iommi \& Kiwi \cite{iommikiwi} showed that the Lyapunov spectrum of an expanding map need not be concave,
and posed various problems concerning the possible number of inflection points.
In this paper we answer a conjecture in \cite{iommikiwi} by proving that the Lyapunov spectrum of a two branch piecewise linear map has at most two points of inflection.
We then answer a question in  \cite{iommikiwi} by proving that there exist finite branch piecewise linear maps whose Lyapunov spectra have arbitrarily many points of inflection.
This approach is used to 
exhibit a countable branch piecewise linear map whose Lyapunov spectrum has infinitely many points of inflection.
\end{abstract}

\thanks{The authors would like to thank Victor Klepsyn for his very helpful suggestions.
The second author was partly supported by the ERC Grant
833802-Resonances.  The third author was partly supported by EPSRC grant EP/T001674/1.
}
\maketitle

\section{Introduction}\label{introsection}

For a differentiable dynamical system $T:X\to X$, 
where for simplicity $X$ is a subset of the unit interval,
the \emph{Lyapunov exponent}
of a point $x\in X$ is given by
$$\lambda(x)=\lim_{n\to\infty} \frac{1}{n}\log |(T^n)'(x)|
$$
whenever this limit exists.
Typically the set of all Lyapunov exponents for a given map $T$ is a closed interval of positive length. 
An investigation into the size of the set of points $x$ corresponding to a given Lyapunov exponent
$\alpha$ in this interval leads to the notion,
introduced by Eckmann \& Procaccia \cite{eckmannprocaccia},
 of the associated \emph{Lyapunov spectrum} $L$,
being a map given by defining $L(\alpha)$ as the Hausdorff dimension of the level set $\{x\in X: \lambda(x)=\alpha\}$.

The Lyapunov spectrum was studied rigorously by Weiss \cite{weiss}, continuing a broader programme with Pesin
(see e.g.~\cite{pesinbook, pesinweiss}).  
In the setting of conformal expanding maps with finitely many branches, 
Weiss \cite{weiss} proved the real analyticity of $L$,
and also claimed that $L$ is always concave\footnote{In fact the word \emph{convex} (rather than \emph{concave}) is used in the claim \cite[Thm.~2.4 (1)]{weiss}, though 
%since $L$ is never convex except in the trivial instance when it is constant,
 the interpretation is that of  \emph{concave}  in the sense that we use it  (see \cite[p.~536]{iommikiwi}); all specific examples of Lyapunov spectra known at the time of \cite{weiss} were indeed concave.}.
By contrast for expanding maps (on a subset of $[0,1]$, say) with \emph{infinitely} many branches
the Lyapunov spectrum $L$ can never be concave (see e.g.~\cite[Thm.~4.3]{iommi}), a simple consequence of the non-negativity of $L$ and the unboundedness of its domain (i.e.~the interval of all Lyapunov exponents);
% in the infinite branch setting;
the Lyapunov spectrum in the specific case of the Gauss map has been analysed by  Kesseb\"ohmer \& Stratmann \cite{ks},
and in the case of the R\'enyi map by Iommi \cite{iommi}.

Motivated by these examples, Iommi \& Kiwi \cite{iommikiwi} revisited the case of finite branch expanding maps, and discovered that in fact the Lyapunov spectrum is \emph{not} always concave;
indeed 
%they showed that 
even in the simplest possible setting of \emph{two-branch piecewise linear maps} 
(see Definition \ref{piecewiselinearmapdefn})
there exist examples with non-concave Lyapunov spectra (so such examples have points of inflection, i.e.~points at which the second derivative vanishes). 
For finite branch maps the number of inflection points is necessarily even (cf.~\cite[p.539]{iommikiwi}), and all examples of non-concave Lyapunov spectra exhibited in \cite{iommikiwi} have precisely two points of inflection.

The natural problem suggested by the work of Iommi \& Kiwi is the extent to which it is possible to find Lyapunov spectra with strictly more than two points of inflection.
% in the Lyapunov spectrum.
Specifically, the following conjecture and question are contained in \cite[p.539]{iommikiwi}:

\begin{conjecture}\label{ikconjecture} (Iommi \& Kiwi \cite{iommikiwi})
The Lyapunov spectrum of a 2-branch expanding map has at most two points of inflection.
\end{conjecture}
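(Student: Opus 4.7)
The plan is to convert the inflection-counting question into an analytic statement about the pressure function and then to identify the single property that would need to be verified for a general 2-branch expanding map. Let $P(\beta)$ denote the topological pressure of the potential $-\beta\log|T'|$. By the multifractal formalism for conformal expanding maps, $L(\alpha)=H(\alpha)/\alpha$ where $H(\alpha):=\inf_{\beta\in\R}(P(\beta)+\beta\alpha)$ is the concave Legendre transform of the strictly convex, real-analytic function $P$. Parametrising the spectrum by $\beta$ via $\alpha(\beta)=-P'(\beta)$, the standard Legendre identities read $H(\alpha(\beta))=P(\beta)-\beta P'(\beta)$, $H'(\alpha(\beta))=\beta$, and $H''(\alpha(\beta))=-1/P''(\beta)$.

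Direct differentiation gives $\alpha^{3}L''(\alpha)=\alpha^{2}H''(\alpha)-2\alpha H'(\alpha)+2H(\alpha)$; substituting the identities above converts this into
$$G(\beta):=2P(\beta)-\frac{P'(\beta)^{2}}{P''(\beta)},$$
so the inflection points of $L$ are in bijection with the zeros of $G$. A further differentiation yields the compact formula
$$G'(\beta)=\frac{P'(\beta)^{2}}{P''(\beta)^{2}}\,P'''(\beta).$$
On the complement of the (at most one) zero of $P'$, the sign of $G'$ agrees with the sign of $P'''(\beta)$, and the conjecture reduces to the purely analytic claim: \emph{for every 2-branch expanding map, $P'''(\beta)$ has at most one sign change on $\R$.} Granted this, $G$ has a single local extremum and hence at most two zeros, giving at most two inflection points of $L$.

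The real work lies in the nonlinear case of the third-derivative claim. For 2-branch piecewise linear maps the pressure collapses to $P(\beta)=\log(a^{-\beta}+b^{-\beta})$, and one computes $P'''(\beta)=p(1-p)(1-2p)(\log a-\log b)^{3}$ with $p=a^{-\beta}/(a^{-\beta}+b^{-\beta})$, which vanishes uniquely at $p=\tfrac12$; this is the easy case. For a general $C^{1+\alpha}$ 2-branch map I would work in the symbolic model of the full 2-shift, expressing $P$ through the leading eigenvalue of the transfer operator $\l_\beta f(x)=\sum_{i=1,2}|\psi_{i}'(x)|^{\beta}f(\psi_{i}(x))$ on H\"older functions, so that $P'''(\beta)$ is realised as the asymptotic third cumulant of $\log|T'|$ under the equilibrium state $\mu_{\beta}$. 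Two plausible routes to the single-sign-change statement present themselves: (a) a perturbative argument showing that this property is stable under $C^{1+\alpha}$ perturbations of a piecewise linear reference map which preserve the 2-branch structure; or (b) a direct Gibbsian identity for the third cumulant, exploiting that the one-parameter family $\{\mu_{\beta}\}$ on the full 2-shift is governed by a single ``tilt'' between the two symbols, which should force the skewness to be a unimodal function of $\beta$.

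I expect route (b) to be the principal obstacle: producing a workable closed-form expression for the third cumulant in the nonlinear setting and showing that it has exactly one zero. The Legendre-transform reduction above localises the entire conjecture to this single analytic statement about $P'''$; the piecewise linear subcase, which is what the present paper actually establishes, then falls out as an immediate corollary.
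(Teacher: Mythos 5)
Your proposal establishes exactly what the paper establishes, namely the piecewise-linear case of the conjecture (Theorem \ref{twotheorem}), but by a genuinely different route; and, like the paper, it leaves the nonlinear case open (the authors state explicitly that they do not know whether a nonlinear 2-branch expanding map can have more than two Lyapunov inflections, so your unexecuted routes (a) and (b) leave you in the same position as the paper). The paper works in the weight variable $x=\frac{\log b-\alpha}{\log(b/a)}$: it writes $L$ via the explicit Bernoulli entropy formula as $M(x)/\log(b/a)$ with $M(x)=\phi(x)/(x-c)$ and $\phi$ the binary entropy, solves the quadratic $\phi''(x_0)(x_0-c)^2-2\phi'(x_0)(x_0-c)+2\phi(x_0)=0$ for $x_0-c$, discards one root (since $\Phi^+\ge -1$ while $-c<-1$), and shows the surviving root function $\Phi^-$ is unimodal because its derivative is $\phi'''/(\phi'')^2$ times a quantity shown to be positive. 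You work instead in the inverse-temperature variable: inflections correspond to zeros of $G(\beta)=2P-P'^2/P''$ (this matches the paper's equation (\ref{inflectionequationnumerator}) under $t=-\beta$), and your identity $G'=(P'/P'')^2P'''$, which I have checked, makes $G$ unimodal as soon as $P'''$ changes sign once; for the two-branch linear map this is the vanishing of the skewness of a two-point distribution exactly at $p=1/2$. The two arguments turn on the same underlying fact (compare the paper's hypothesis $\phi'''<0$ on $(0,1/2)$, $\phi'''>0$ on $(1/2,1)$ with your $P'''\propto p(1-p)(1-2p)$ and the monotonicity of $p$ in $\beta$), but yours avoids the square roots and the elimination of $\Phi^+$ altogether, and, unlike the paper's Theorem \ref{generaltheorem}, which is tied to the two-branch entropy formula, your criterion that a single sign change of $P'''$ forces at most two inflections makes sense for any number of branches and any expanding map, which is why it cleanly isolates the remaining obstacle in the nonlinear case.

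Three small points. Your formula for $P'''$ should carry $(\log b-\log a)^3$ rather than $(\log a-\log b)^3$; this is immaterial to the count of sign changes. The degenerate case $a=b$, where $P''\equiv 0$ and your formulae are undefined, must be dispatched separately (the domain of $L$ is then a single point, so there is nothing to prove). Finally, ``reduces to'' slightly overstates the logic: a single sign change of $P'''$ is sufficient for, not equivalent to, the two-inflection bound, since $G$ could in principle have several local extrema yet only two zeros.
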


\begin{question}\label{ikquestion} (Iommi \& Kiwi \cite{iommikiwi})
Is there an upper bound on the number of inflection points of the Lyapunov spectrum for
piecewise expanding maps?
\end{question}

More broadly, the work of Iommi \& Kiwi provokes interest in constructing maps whose Lyapunov spectra have more than two inflection points, and in understanding the general properties responsible for producing such inflection points.
Henceforth for brevity we shall often use the term \emph{Lyapunov inflection} (of a map) to denote a point of inflection in the Lyapunov spectrum of that map.

In this article we address both Conjecture \ref{ikconjecture} and Question \ref{ikquestion}, as well as the more general issue of understanding maps with more than two Lyapunov inflections.
Specifically, we first give an affirmative answer to Conjecture \ref{ikconjecture} in the setting of 2-branch \emph{piecewise linear} maps:

\begin{theorem}\label{twotheorem}
The Lyapunov spectrum of a two branch piecewise linear map has at most two points of inflection.
\end{theorem}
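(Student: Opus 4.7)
The plan is to reduce the theorem to a clean one-variable calculus problem via thermodynamic formalism. Consider a two-branch piecewise linear expanding map with slopes $\beta_1 > \beta_2 > 1$, and set $u = \log\beta_1$, $v = \log\beta_2$. The natural branch coding identifies the invariant repeller with the full shift on two symbols, and conformality together with the variational principle gives the explicit closed form
\[
L(\alpha) \;=\; \frac{h(p(\alpha))}{\alpha}, \qquad \alpha \in [v,u],
\]
where $h(p) = -p\log p - (1-p)\log(1-p)$ is the binary entropy and $p(\alpha) = (\alpha - v)/(u-v)$ is the Bernoulli weight attaining Lyapunov exponent $\alpha$. The key structural observation is that $L$ depends only on the two slopes, not on the positions or lengths of the branch intervals, so the whole theorem is a statement about one function of a single variable, parameterised by the ratio encoded in $a := v/(u-v) > 0$.

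A short computation (using that $p(\alpha)$ is affine) shows that on $(v,u)$ the zeros of $L''$ are precisely the zeros of
\[
F(p) \;=\; -\frac{(p+a)^2}{p(1-p)} + 2a\log p - 2(a+1)\log(1-p), \qquad p \in (0,1).
\]
The boundary behaviour is straightforward: as $p \to 0^+$ and as $p \to 1^-$ the rational term dominates the logarithmic one, so $F \to -\infty$ at both endpoints. Hence the number of zeros of $F$ is controlled by the number and heights of its local maxima in $(0,1)$, and the theorem will follow once I show that $F$ has exactly one critical point in $(0,1)$.

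This is the main step and the principal obstacle. Clearing denominators in $F'$ produces a cubic
\[
P(p) \;=\; -2p^3 + (1-4a)p^2 + 2a(1-a)p + a^2,
\]
whose root structure in $(0,1)$ must be pinned down. The fortunate algebraic feature that makes the argument go through is that $P'(p) = -6p^2 + (2-8a)p + 2a(1-a)$ has discriminant $(1+2a)^2$, so its roots are the explicit values $-a$ and $(1-a)/3$. One then splits into two cases. If $a \ge 1$, both roots of $P'$ are non-positive, so $P$ is strictly decreasing on $(0,1)$ from $P(0) = a^2 > 0$ to $P(1) = -(1+a)^2 < 0$, yielding one zero. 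If $0 < a < 1$, then on $(0,(1-a)/3)$ the function $P$ is increasing and starts positive, so it has no zero there; on $((1-a)/3, 1)$ it decreases from a positive to a negative value, contributing exactly one zero. Either way $F'$ has a single zero, which is necessarily a maximum since $F(0^+) = F(1^-) = -\infty$.

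Combining the two ingredients, $F$ has a unique local maximum on $(0,1)$ with $F \to -\infty$ at both endpoints, so $\{F = 0\}$ has at most two elements and hence $L$ has at most two inflection points, proving the theorem. The Iommi--Kiwi examples with exactly two inflections correspond to the regime in which the unique maximum value of $F$ is strictly positive; the two boundary cases (maximum equal to zero, or strictly negative) would give one or zero inflections respectively.
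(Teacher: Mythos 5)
Your proof is correct, and it takes a genuinely different route from the paper's. Both arguments start from the same closed form $L(\alpha)=h(p)/\alpha$ and reduce to counting zeros of the numerator of $M''$ where $M(p)=h(p)/(p+a)$; the divergence is in how that count is done. The paper proves a general statement (its Theorem 3.3): for \emph{any} negative, convex, symmetric $\phi$ with $\phi'''<0$ on $(0,1/2)$, the function $\phi(x)/(x-c)$ with $c>1$ has at most two inflections. It does this by solving the quadratic $\phi''\cdot(x-c)^2-2\phi'\cdot(x-c)+2\phi=0$ for $x-c$, discarding the ``$+$'' root, and showing the remaining function $\Phi^-(x)$ is decreasing on $(0,1/2)$ and increasing on $(1/2,1)$, so each level $-c<-1$ is hit at most twice; the symmetry of $\phi$ and the sign of $\phi'''$ are the engines. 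You instead fix $a$ and study the numerator $F(p)=(p+a)^3M''(p)$ directly, using the specific form of the binary entropy: the decisive fact is that $F'$ has numerator $(p+a)\bigl(-2p^2+(1-2a)p+a\bigr)$, and the quadratic factor manifestly has exactly one root in $(0,1)$ (it is positive at $0$ and negative at $1$) --- note you could have skipped the detour through the cubic $P$ and its derivative entirely, and your stated discriminant $(1+2a)^2$ should be $4(1+2a)^2$, though the roots $-a$ and $(1-a)/3$ are correct. Your approach is more elementary and computational, buys the clean trichotomy (zero, one, or two inflections according to the sign of $\max F$, which connects naturally to the explicit threshold in the paper's Theorem 3.2), but is tied to the explicit entropy function; the paper's argument is less computational and isolates the structural hypotheses (symmetry, convexity, sign of the third derivative) under which the conclusion holds.
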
 

We do not know, however, whether there are nonlinear 2-branch expanding maps with more than two Lyapunov inflections.

Secondly, we construct explicit examples of 
%piecewise linear 
maps with more than two Lyapunov inflections
(see e.g.~Figures \ref{4figure} and \ref{6figure} below), and
resolve Question \ref{ikquestion} as follows, proving that there is \emph{no upper bound} on the number of Lyapunov inflections, and that indeed this can be established within the class of piecewise linear maps:

\begin{theorem}\label{arbitrarilymanytheorem}
For any integer $n\ge 0$, there is a piecewise linear map whose Lyapunov spectrum has at least $n$
points of inflection.
\end{theorem}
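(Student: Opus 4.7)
\emph{Proof plan.}
My approach combines the standard thermodynamic description of the Lyapunov spectrum with an inductive scale-separation construction starting from the two-branch example of Iommi--Kiwi. For a piecewise linear map coded by a full Bernoulli shift on $k$ symbols with slopes $\lambda_{1},\dots,\lambda_{k}>1$, the associated pressure is $P(q)=\log\sum_{i=1}^{k}\lambda_{i}^{-q}$, and the Lyapunov spectrum has the parametric form $\alpha(q)=-P'(q)$, $L(\alpha(q))=q+P(q)/\alpha(q)$. Writing $L=h/\alpha$ with $h(\alpha)=P(q)+q\alpha$, and using the Legendre identities $h'(\alpha)=q$ and $h''(\alpha)=-1/P''(q)$, the condition $L''(\alpha)=0$ reduces to
\[
\psi(q)\ :=\ P'(q)^{2}-2\,P(q)\,P''(q)\ =\ 0.
\]
Hence the task becomes: for every $n$, exhibit slopes $\lambda_{1},\dots,\lambda_{k}$ for which the real-analytic function $\psi$ has at least $n$ simple zeros (simple zeros giving genuine inflections since $L''=-\psi/(P''\alpha^{3})$ and $P''>0$).

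I would argue by induction on $m$ with hypothesis: \emph{there is a finite-branch piecewise linear map whose $\psi$ has at least $2m$ simple zeros}. The base case $m=1$ is supplied by \cite{iommikiwi}, simplicity being verified directly. For the inductive step, given $(P_{k},\psi_{k})$ with $2m$ simple zeros, I would adjoin a new branch of a carefully chosen slope $\Lambda$, yielding
\[
P_{k+1}(q)\ =\ \log\!\bigl(e^{P_{k}(q)}+\Lambda^{-q}\bigr).
\]
For $\Lambda$ sufficiently separated from the existing slopes, there is a unique transition value $q_{c}=q_{c}(\Lambda)$ with $e^{P_{k}(q_{c})}=\Lambda^{-q_{c}}$. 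Outside a shrinking window around $q_{c}$, $P_{k+1}$ is exponentially $C^{3}$-close either to $P_{k}$ (so the $2m$ existing simple zeros of $\psi_{k}$ persist as zeros of $\psi_{k+1}$ by the implicit function theorem) or to the affine function $q\mapsto -q\log\Lambda$ (on which $\psi=(\log\Lambda)^{2}>0$, contributing no zeros). In the transition window itself, linearising $P_{k}$ at $q_{c}$ and using a Laplace-type expansion shows that $P_{k+1}$ is $C^{3}$-close to a \emph{shifted} two-branch model $a+\log(\mu^{-q}+\Lambda^{-q})$ where $\mu=\exp(-P_{k}'(q_{c}))$ and $a=P_{k}(q_{c})-P_{k}'(q_{c})q_{c}$. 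The inflection equation for this shifted model is a one-parameter deformation (by the shift $a$) of the two-branch equation analysed in \cite{iommikiwi} and in Theorem~\ref{twotheorem}, and its set of two-simple-zero parameters is non-empty and open. Tuning $\Lambda$ (and, if necessary, rescaling the existing slopes to adjust $a$) so that $(a,\mu,\Lambda)$ falls in that regime produces two additional simple zeros of $\psi_{k+1}$ in the transition window, so $\psi_{k+1}$ has at least $2(m+1)$ simple zeros. Iterating $\lceil n/2\rceil$ times yields the theorem.

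The main technical challenge is the transition-window analysis. First, one must establish a quantitative $C^{3}$-approximation of $P_{k+1}$ by its shifted two-branch model on the window, with errors uniformly small in $\Lambda$ and controlled by the modulus of $P_{k}'''$, so that simple zeros of the model $\psi$ survive the perturbation. Second, one must verify that the parameters $(a,\mu,\Lambda)$ can be tuned into the non-empty open regime admitting two simple zeros; since an explicit computation gives $a(\Lambda)\to P_{k}(0)=\log k$ as $\Lambda\to\infty$, this may require replacing the existing slopes $\lambda_{i}$ by $\lambda_{i}^{c}$ (an operation that preserves the existing $2m$ inflections since $\psi$ is invariant under the reparametrisation $q\mapsto cq$) at each inductive step in order to bring the geometry of $\psi_{k+1}$ in the transition window into the desired regime. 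The openness of the inflection regime is itself a structural-stability consequence of the simplicity of the base-case zeros, so once the first inductive step is set up correctly, all subsequent ones go through by the same argument.
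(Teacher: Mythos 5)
Your reduction of the problem to counting zeros of $\psi(q)=P'(q)^2-2P(q)P''(q)$ is exactly the paper's starting point (equations (\ref{inflectionequationnumerator}) and (\ref{keyequation_initial})), and the guiding idea of adjoining branches of much larger derivative is also the one used there. But the inductive step, as you describe it, has a fatal flaw rooted in your use of \emph{one} new branch per scale. With multiplicity one everywhere, the balance point between any two slopes $\mu,\Lambda$ is at $q=0$ (since $\mu^{-q}=\Lambda^{-q}$ forces $q=0$), and your own transition point $q_c(\Lambda)=-\log k/\log\Lambda\to 0^-$ as $\Lambda\to\infty$: all transition windows pile up at the origin rather than separating. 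This creates an irresolvable tension. To produce two new zeros in the window you need strong scale separation (for genuine two-branch systems the threshold is $\log|X_1|/\log|X_2|>12.27\ldots$, Theorem \ref{2branchquantitativetheorem}), which forces $\Lambda$ large and hence $q_c$ close to $0^-$; but the two-branch base case necessarily has one of its two inflections at $q<0$ (its inflections straddle its balance point $q=0$), and for $\Lambda$ large that inflection lies in the region $q<q_c$ where the new branch dominates and, as you yourself note, $\psi_{k+1}\approx(\log\Lambda)^2>0$. So the old negative-$q$ zeros are \emph{destroyed} there, not preserved; your implicit-function-theorem persistence claim only covers the region $q>q_c$. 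A parity check confirms something is wrong with the bookkeeping: the number of inflections of a finite-branch map is even, so ``all $2m$ old zeros persist plus $2$ new ones appear while one old zero sits in a region where $\psi>0$'' cannot be simultaneously true. The proposed repair $\lambda_i\mapsto\lambda_i^c$ cannot help, since (as you observe) it is a reparametrisation $q\mapsto cq$ under which the entire obstruction --- the signs and relative order of the zeros and of $q_c$ --- is invariant. Finally, $a$ and $\mu$ are functions of $\Lambda$ (via $q_c$), not free parameters, so the claim that $(a,\mu,\Lambda)$ can be ``tuned'' into a non-empty open two-zero regime is unsupported.

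The paper's construction shows what is actually needed: at scale $j$ it adjoins $2^{j^2}$ branches, all of derivative $2^{2^j}$. The growing multiplicities are essential, not a convenience --- they place the balance point between consecutive scales at $s_j=(2j+1)/2^j$, an explicit decreasing sequence of \emph{positive} numbers, so the transition windows are genuinely separated and, at the location of each previously created inflection, the older terms of $F_N(s)=\sum_j 2^{j^2-2^js}$ still dominate after new branches are added. The proof is then not perturbative at all: one checks by direct estimation that $G_N>H_N$ at the midpoints $m_k$ and $G_N<H_N$ at the points $s_k$ for $26\le k\le N$, and the intermediate value theorem yields a zero of (\ref{keyequation_initial}) in each of the $2(N-26)$ intervals between consecutive test points. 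To salvage your scheme you would at minimum have to replace single adjoined branches by families with carefully growing multiplicities and then control where the old zeros sit relative to the new window --- at which point you have essentially been led back to the paper's argument.
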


\begin{figure}[!h]
\begin{center}
\begin{tabular}{cc}
\includegraphics[scale=1.0]{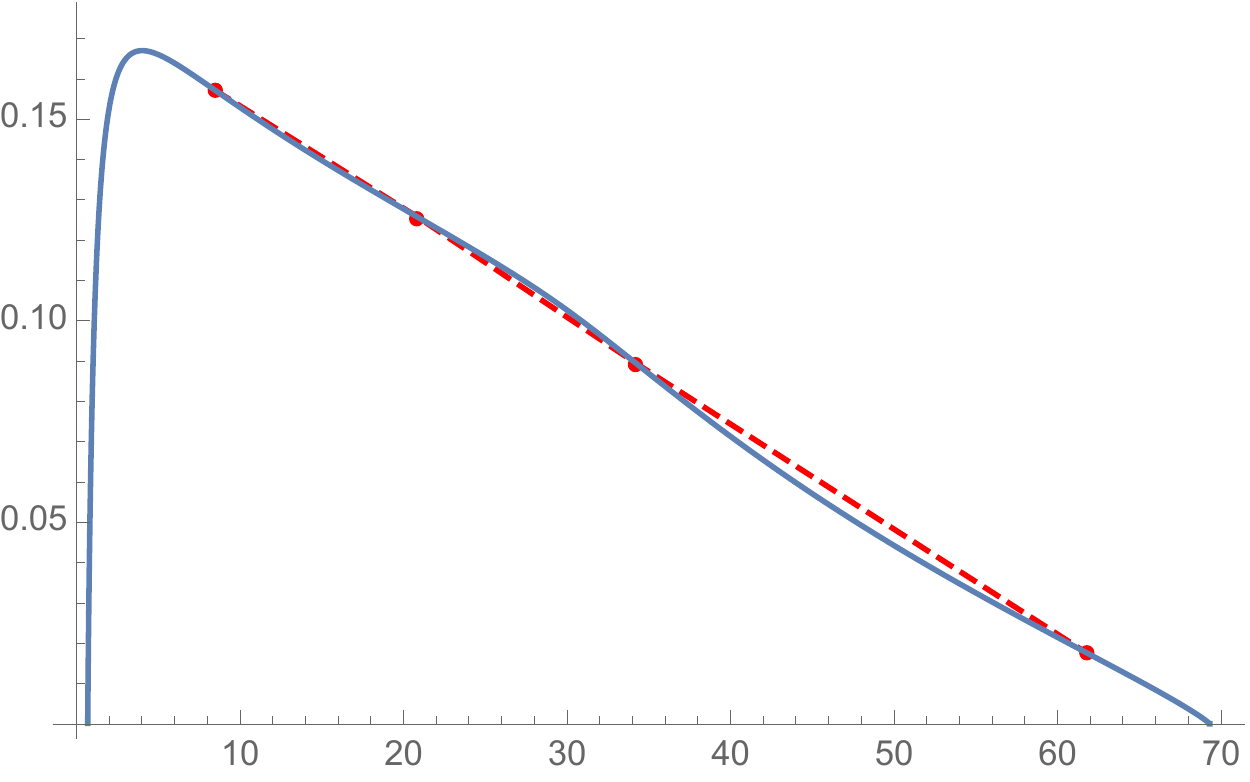}
\end{tabular}
\caption{Graph of Lyapunov spectrum $L$ for 22-branch piecewise linear map $T$ with derivative $T'\equiv2$ on one branch,
 $T'\equiv 2^{45}$ on 20 branches, and $T'\equiv 2^{100}$ on one branch.
 The four Lyapunov inflections 
 (with dashed linear interpolations between them)
 are at 
 $\alpha_1\approx8.52$, $\alpha_2\approx20.88$, $\alpha_3\approx34.22$,
 $\alpha_4\approx61.73$, with $L$ convex on $[\alpha_1,\alpha_2]$ and $[\alpha_3,\alpha_4]$, and concave on complementary intervals in its domain.
 }\label{4figure}
\end{center}
\end{figure}

\begin{figure}[!h]
\begin{center}
\begin{tabular}{cc}
\includegraphics[scale=1.0]{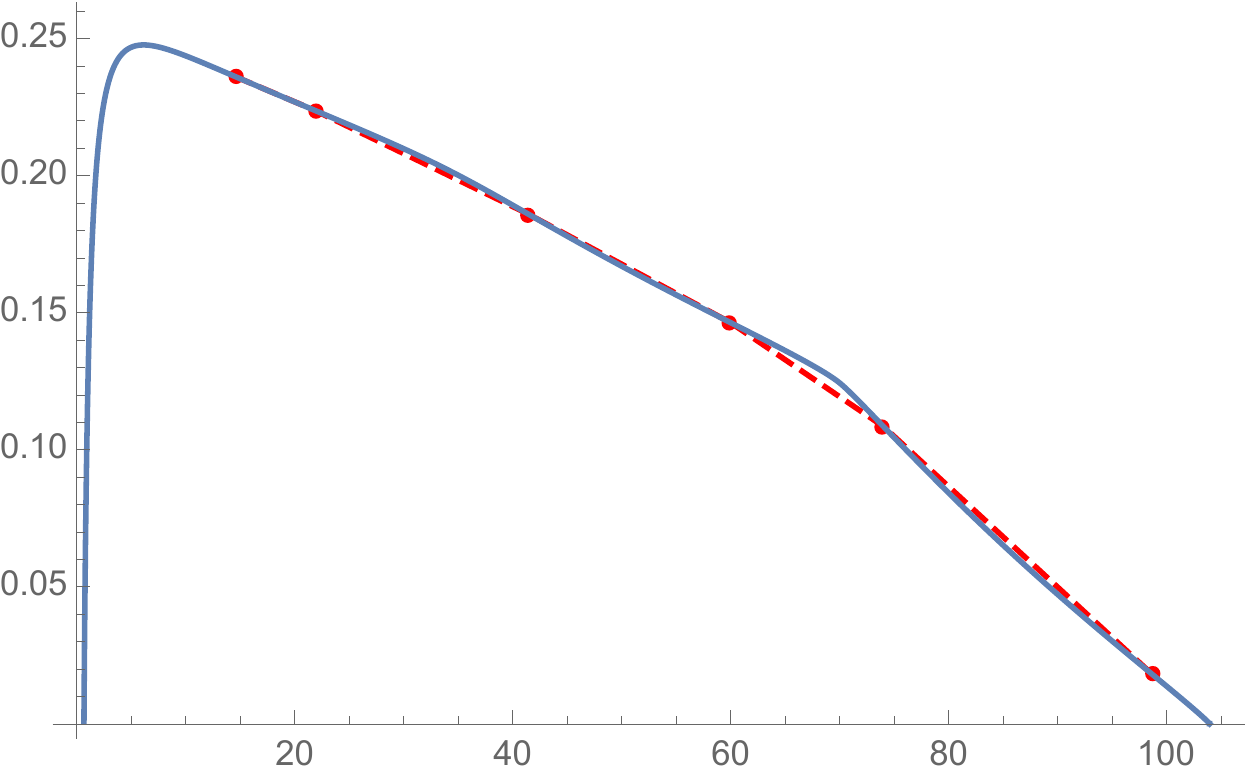}
\end{tabular}
\caption{Graph of Lyapunov spectrum $L$ for 7002-branch piecewise linear map $T$ with derivative $T'\equiv2$ on one branch,
 $T'\equiv 2^{51}$ on 1000 branches, 
 $T'\equiv 2^{101}$ on 6000 branches,
 and $T'\equiv 2^{150}$ on one branch.
 The six Lyapunov inflections 
 (with dashed linear interpolations between them)
 are at 
 $\alpha_1\approx14.66$, $\alpha_2\approx21.85$, $\alpha_3\approx41.48$,
 $\alpha_4\approx60.01$, 
  $\alpha_5\approx74.04$,
   $\alpha_6\approx98.65$,
 with $L$ convex on $[\alpha_1,\alpha_2]$, $[\alpha_3,\alpha_4]$ and $[\alpha_5,\alpha_6]$, and concave on complementary intervals in its domain.}\label{6figure}
\end{center}
\end{figure}

A natural corollary of Theorem \ref{arbitrarilymanytheorem} is that there is also no upper bound on the number of zeros of higher order derivatives of the Lyapunov spectrum:

\begin{cor}\label{arbitrarilymanyhigherordertheorem}
For any integers $n\ge 0$, $k\ge 2$, there is a piecewise linear map whose Lyapunov spectrum has at least $n$
points at which its $k^{th}$ order derivative vanishes.
\end{cor}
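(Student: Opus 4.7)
The plan is to deduce the corollary from Theorem \ref{arbitrarilymanytheorem} by an iterated application of Rolle's theorem. A piecewise linear map with finitely many branches is in particular a conformal expanding map with finitely many branches, so by the real analyticity result of Weiss \cite{weiss} the Lyapunov spectrum $L$ is real analytic on the interior of its domain. In particular all higher-order derivatives $L^{(j)}$ exist there and are continuously differentiable, which legitimises the repeated use of Rolle's theorem in what follows.

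Fix $n\ge 0$ and $k\ge 2$. First I would dispose of the case $k=2$, which is precisely Theorem \ref{arbitrarilymanytheorem}, so assume from now on that $k\ge 3$. I would then invoke Theorem \ref{arbitrarilymanytheorem} with $n$ replaced by $N:=n+k-2$, to obtain a piecewise linear map whose Lyapunov spectrum $L$ has at least $N$ distinct inflection points $\alpha_1<\alpha_2<\cdots<\alpha_N$ in the interior of its domain; by the paper's convention these are $N$ distinct zeros of $L''$. Applying Rolle's theorem to $L''$ on each subinterval $[\alpha_i,\alpha_{i+1}]$, $1\le i\le N-1$, would produce $N-1$ distinct interior points at which $L'''$ vanishes. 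Iterating this construction a total of $k-2$ times then yields at least $N-(k-2)=n$ distinct points at which $L^{(k)}$ vanishes, which is exactly the required conclusion.

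The argument presents no serious obstacle: the only mild point to verify is that each successive set of zeros produced by Rolle's theorem is strictly interlaced with its predecessor, and hence always lies inside the open interval $(\alpha_1,\alpha_N)$, where $L$ is real analytic and the hypotheses of Rolle's theorem are unconditionally satisfied. Thus the corollary is essentially a standard packaging of iterated Rolle's theorem on top of Theorem \ref{arbitrarilymanytheorem}; the substantive content of the statement is entirely inherited from that theorem.
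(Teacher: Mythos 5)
Your proof is correct and is precisely the argument the paper intends (the corollary is stated without proof as an immediate consequence of Theorem \ref{arbitrarilymanytheorem}): real analyticity of $L$ on $\mathring{A}$ justifies the iterated application of Rolle's theorem, and starting from $n+k-2$ distinct zeros of $L''$ one loses exactly one zero per differentiation, leaving at least $n$ zeros of $L^{(k)}$. The bookkeeping, the distinctness of the Rolle points (they lie in pairwise disjoint open intervals), and the reduction of the case $k=2$ to the theorem itself are all handled properly.
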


A natural by-product of our approach to proving Theorem \ref{arbitrarilymanytheorem} 
is that by moving into the realm of \emph{infinite branch} maps,  the first example of
a map with \emph{infinitely many} Lyapunov inflections can be exhibited:

\begin{theorem}\label{infiniteintrotheorem}
There is an infinite branch piecewise linear map whose Lyapunov spectrum
has a countable infinity of inflection points.
\end{theorem}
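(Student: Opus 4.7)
The plan is to extend the finite-branch construction underlying \theoremref{arbitrarilymanytheorem} to a countable-branch piecewise linear map by iterating the addition of equal-derivative ``clusters'' of branches indefinitely. Specifically, I would pick strictly increasing sequences of positive integers $(m_k)_{k \geq 1}$ and $(n_k)_{k \geq 1}$ and define a full-shift piecewise linear map $T$ having one branch of derivative $2$ together with, for each $k \geq 1$, a cluster of $n_k$ branches of common derivative $2^{m_k}$. The sequences are tuned so that $\sum_{k \geq 1} n_k \, 2^{-t m_k}$ converges for all sufficiently large $t$, which makes the pressure $P(t) = \log\bigl(2^{-t} + \sum_{k \geq 1} n_k 2^{-t m_k}\bigr)$ real analytic on an open half-line and ensures a well-defined real-analytic Lyapunov spectrum $L$ via the standard multifractal formalism for countable full-shift Markov systems.

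The central strategy is to view $T$ as a limit of finite-branch truncations $T_N$, consisting of the single branch of derivative $2$ together with the first $N$ clusters (suitably capped by an additional extremal branch in the spirit of Figures~\ref{4figure} and \ref{6figure}), and to argue that the inflection points of the Lyapunov spectrum $L_N$ of $T_N$ persist in the limit. By (a refinement of) the construction used to prove \theoremref{arbitrarilymanytheorem}, one can arrange that $L_N$ has at least $2N$ points of inflection, localised near the Lyapunov exponents $\alpha \approx m_k \log 2$ associated with the first $N$ clusters. If the $m_k$ grow sufficiently quickly then these $2N$ points lie in pairwise disjoint compact subintervals of $(0,\infty)$ whose separations are uniformly bounded below.

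To pass to the limit, I would establish a $C^3$ convergence $L_N \to L$ uniformly on compact subsets of the common domain, which follows from the corresponding rapid convergence of $P_N$ to $P$ and its derivatives, itself a consequence of the fast decay of the truncation tail $\sum_{k > N} n_k 2^{-t m_k}$. Provided that the inflection points of $L_N$ are transverse zeros of $L_N''$ with a uniform lower bound on $|L_N'''|$ at those zeros, each such inflection perturbs to a nearby inflection point of $L$ of the same sign-change type, and inflections associated with distinct clusters do not coalesce. Letting $N \to \infty$ then yields the required countable infinity of inflection points of $L$.

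The main obstacle is the quantitative control: one must verify that the inflections attributable to the first $N$ clusters are genuinely robust under the addition of all remaining clusters, i.e.\ that the tail $\sum_{k > N} n_k 2^{-t m_k}$ together with its first three $t$-derivatives is small enough on the relevant compact $t$-intervals that it cannot flip the sign of $L''$ near the claimed inflection points. This requires a careful inductive tuning of the sequences $(m_k)$ and $(n_k)$; it is feasible because there is complete freedom to let $m_k$ grow arbitrarily fast, which makes the tail bounds arbitrarily strong, while the $n_k$ are chosen at each stage to produce ``bumps'' of sufficient magnitude in $L_N$ to survive all subsequent perturbations.
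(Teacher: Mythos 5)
Your overall strategy (extend the finite-branch construction to countably many clusters and recover infinitely many inflections) is the right one, but your mechanism for passing to the limit contains a genuine gap. You propose to perturb each inflection of the truncated spectrum $L_N$ to an inflection of $L$ under $C^3$-convergence, ``provided that the inflection points of $L_N$ are transverse zeros of $L_N''$ with a uniform lower bound on $|L_N'''|$ at those zeros.'' Nothing in the construction of \theoremref{arbitrarilymanytheorem} delivers this: the inflections there are produced purely by the intermediate value theorem applied to the two sides of equation (\ref{keyequation_initial}), which guarantees existence of sign changes of $L_N''$ but says nothing about simplicity of its zeros or about $L_N'''$ there. Establishing a uniform lower bound on $|L_N'''|$ at a priori unknown zeros would be a substantial (and, as it turns out, unnecessary) extra task, and without it your persistence step does not go through as stated. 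The correct repair is to propagate not the zeros but the \emph{strict inequalities at the explicit test points}: the proof of Proposition \ref{finitesolutionsprop} shows $G_N(m_k)-H_N(m_k)$ and $H_N(s_k)-G_N(s_k)$ are positive with quantitative margins (e.g.\ $H_N(s_k)>\tfrac1{10}$ versus $G_N(s_k)<\tfrac1{863}$), and these open conditions survive any sufficiently small perturbation, after which the intermediate value theorem is reapplied to the limit function. Your closing paragraph gestures at this (``cannot flip the sign of $L''$''), but as written the argument hinges on the unverified non-degeneracy hypothesis.

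The paper's route is more direct and avoids the limiting argument altogether: it takes the union of \emph{all} the clusters of Definition \ref{TNdefn} ($2^{j^2}$ intervals of length $2^{-2^j}$ for every $j\ge 6$), notes that the formalism of \S\ref{preliminariessection} carries over with $F(t)=\sum_i |X_i|^t$ defined for $t>0$, and then verifies the analogues of (\ref{GdominatesH}) and (\ref{HdominatesG}) for all $k\ge 26$ \emph{for the infinite sum itself}. This costs nothing new because every head and tail estimate in the proof of Proposition \ref{finitesolutionsprop} is already uniform in $N$ (the tail sums are bounded by convergent geometric-type series independent of $N$), so one simply lets $N=\infty$ in those bounds. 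If you adopt the sign-change formulation, your truncation-and-persistence scheme can be made to work, but it is strictly more machinery ($C^3$ convergence on compacta, inductive tuning of $(m_k),(n_k)$) for the same conclusion.
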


As above, this implies a corresponding result for zeros of higher order derivatives of the Lyapunov spectrum:

\begin{cor}
There is an infinite branch piecewise linear map such that
for all $k\ge 2$, its Lyapunov spectrum has
its $k^{th}$ order derivative equal to zero at infinitely many distinct points.
\end{cor}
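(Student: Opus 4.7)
The plan is to deduce this corollary from \theoremref{infiniteintrotheorem} by an iterative application of Rolle's theorem, in exactly the same spirit as \corref{arbitrarilymanyhigherordertheorem} is derived from \theoremref{arbitrarilymanytheorem} in the finite-branch setting. Concretely, let $T$ be the infinite branch piecewise linear map supplied by \theoremref{infiniteintrotheorem}, and let $L$ denote its Lyapunov spectrum. By hypothesis there is a strictly increasing sequence $\alpha_1<\alpha_2<\alpha_3<\cdots$ of points in the interior of the domain of $L$ with $L''(\alpha_i)=0$ for every $i\ge 1$, which already establishes the claim for $k=2$.

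For the inductive step, I would assume that for some $k\ge 2$ the function $L^{(k)}$ vanishes at an infinite strictly increasing sequence of interior points $\gamma_1<\gamma_2<\gamma_3<\cdots$. Using that $L$ is real analytic on the interior of its domain (a consequence of thermodynamic formalism for the associated countable Markov shift), $L^{(k)}$ is in particular $C^1$ on each compact interval $[\gamma_i,\gamma_{i+1}]$, so Rolle's theorem furnishes a point $\delta_i\in(\gamma_i,\gamma_{i+1})$ with $L^{(k+1)}(\delta_i)=0$. These $\delta_i$ lie in pairwise disjoint open intervals and are therefore distinct, so $L^{(k+1)}$ has infinitely many distinct zeros in the interior of the domain of $L$, closing the induction and giving the corollary.

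The only genuine point of care rather than a real obstacle is to verify that the Lyapunov spectrum $L$ produced by \theoremref{infiniteintrotheorem} is indeed real analytic at every order on the interior of its domain, since this is what allows Rolle's theorem to be applied indefinitely. In the finite-branch case the corresponding regularity was established by Weiss~\cite{weiss}, and I would expect the countable-branch extension to follow directly from the standard multifractal/thermodynamic formalism for countable Markov shifts applied to the specific construction underlying \theoremref{infiniteintrotheorem}; once this regularity is in place, the Rolle-based induction above is essentially a one-line observation and completes the proof.
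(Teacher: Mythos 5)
Your proof is correct and is essentially the argument the paper intends: the corollary is stated without proof, deduced from \theoremref{infiniteintrotheorem} ``as above'', and the iterated Rolle argument between consecutive zeros of $L^{(k)}$ is exactly the natural route. The regularity point you flag is handled in the paper by the observation that the theory of \S 2 carries over almost verbatim to the countable-branch piecewise linear setting, where $p(t)=\log F(-t)$ is given by an explicit convergent series for $t<0$, so $L$ is indeed real analytic on the interior of its domain and the induction closes.
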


The organisation of this article is as follows.
Section \ref{preliminariessection} consists of various preliminary definitions and results concerning the Lyapunov spectrum and its first two derivatives. Although we work exclusively with piecewise linear maps (see Definition
\ref{piecewiselinearmapdefn}), much of \S \ref{preliminariessection} is valid in the more general setting of expanding maps.
While most of \S \ref{preliminariessection} is already in the literature in some form, 
our subsequent focus on inflection points motivates the careful derivation of the formula for the second derivative of $L$
(see \S \ref{derivativessubsection} and \S \ref{pwlinearsubsection}) in a way 
that is relatively self-contained. The key ingredients here are a characterisation of $L$ due to Feng, Lau \& Wu \cite{flw}
(see Proposition \ref{Lcharacterisation1prop}),
together with the well known formula (\ref{pderivformula}) for the derivative of pressure. 

In \S \ref{twobranchsection} we prove Theorem \ref{twotheorem}, exploiting an explicit formula for the Lyapunov spectrum  in order to show that it has at most two points of inflection as a consequence of a more general result 
(Theorem \ref{generaltheorem}) concerning symmetric functions whose lower order derivatives are of prescribed sign.

In \S \ref{manyinflectionssection} we prove Theorem \ref{arbitrarilymanytheorem}, by exhibiting piecewise linear maps 
together with explicit lower bounds on the number of their Lyapunov inflections.
More precisely, we define a sequence of maps $T_N$, where  the lower bound on the number of Lyapunov inflections
for $T_N$ grows linearly with $N$.
It is possible to view the maps $T_N$ as \emph{evolving} from each other, in the sense that
each map $T_{N+1}$ can be described in terms of adjoining additional branches to those of $T_N$.
At each stage the adjoined branches have derivatives much larger than the existing branches, a phenomenon reminiscent of the construction of Iommi \& Kiwi \cite{iommikiwi}, who showed
that the 2-branch piecewise linear maps with Lyapunov inflections are such that the derivative on one branch is much larger than that on the other branch (in a certain precise sense, see \cite[Thm.~A]{iommikiwi},
and Theorem \ref{2branchquantitativetheorem}).
The strategy for bounding from below the number of Lyapunov inflections
for $T_N$ exploits the characterisation of inflection points as solutions to an explicit equation
(namely (\ref{keyequation_initial}), derived in Proposition \ref{keyequationprop}) involving a function related to 
the pressure of a certain family of potentials.
The $T_N$ are then constructed so as to facilitate the definition of two interlaced sequences of numbers converging to zero, with the property that one side of the equation is dominant along one sequence, and the other side dominant along the other sequence, up to a certain point (increasing with $N$) in the sequences.
Consideration of the intervals defined by consecutive points in the two interlaced sequences then yields at least one solution to (\ref{keyequation_initial})
in each such interval, up to a certain point that grows with $N$, thereby guaranteeing an increasing number of Lyapunov inflections for the maps $T_N$.

In \S \ref{infinitebranchsection} we see that
the coherence of the construction of the $T_N$ 
produces, by allowing the process to evolve indefinitely, an infinite branch piecewise linear map $T$
(alternatively, the map $T$ could be considered as the primary object, with the $T_N$ viewed
as finite branch truncations of $T$).
Minor modifications
to the approach of \S \ref{manyinflectionssection} then yield Theorem \ref{infiniteintrotheorem}.

Lastly, in \S \ref{examplessection}, we present several explicit examples of piecewise linear maps with
a prescribed number of Lyapunov inflections.
While these examples are of a rather ad hoc nature, it is noteworthy that (unlike the $T_N$ of
\S \ref{manyinflectionssection}) it is possible to give an \emph{exact count} for the number of Lyapunov inflections in each case, and moreover
the number of branches needed in order to produce a given number of Lyapunov inflections is 
%somewhat 
more economical
than in \S \ref{manyinflectionssection}. 
This prompts a natural question (Question \ref{minbranches}): what is the minimum number of branches needed
in order to witness a given number of Lyapunov inflections?

\section{Preliminaries}\label{preliminariessection}

\subsection{Piecewise linear maps}

We shall be interested in
%in the following class of 
full branch piecewise affine maps defined on subsets of the unit interval;
% where the number of branches is either finite or countably infinite; 
for brevity we call such maps \emph{piecewise linear}:

\begin{defn}\label{piecewiselinearmapdefn}
Given an integer $q\ge 2$, let 
$\{X_i\}_{i=1}^q$ be a collection of pairwise disjoint closed sub-intervals of $[0,1]$, with lengths $|X_i|>0$.
% which do not all have the same length\footnote{The hypothesis that the intervals $X_i$ are not all of the same length simply ensures that, in the following, the modulus of the derivative of the piecewise linear map $T$ is not constant, and hence that its Lyapunov exponent interval $A$ has positive length.}.
%For each $i\in\I$, let $T_i:[0,1]\to[0,1]$ be an affine map whose image is $X_i$.
%In other words, if $X_i=[c_i,d_i]$ then either $T_i(x)=(d_i-c_i)x+c_i$ for all $x\in[0,1]$, or
%$T_i(x)=(c_i-d_i)x+d_i$ for all $x\in[0,1]$.
An associated \emph{piecewise linear map} is any of the $2^q$ maps
$\cup_{i=1}^q X_i\to[0,1]$ whose restriction to each $X_i$ is an affine homeomorphism onto $[0,1]$
(necessarily with derivative $\pm |X_i|^{-1}$).
Any restriction of the map to an interval $X_i$ is referred to as
a \emph{branch}.

%Define the  $T:\cup_{i\in\I} X_i\to[0,1]$ by $T|_{X_i} = T_i^{-1}$, so that respectively either
%$T(x)=(x-c_i)/(d_i-c_i)$ for all $x\in X_i$, or
%$T(x)=(x-d_i)/(c_i-d_i)$
% for all $x\in X_i$.
For any piecewise linear map $T$, the set
$X:= \{x\in[0,1]: T^n(x)\in \cup_{i=1}^q X_i \text{ for all } n\ge 0\}$
depends only on the collection $\{X_i\}_{i=1}^q$, and is such that
the restricted piecewise linear map $T:X\to X$ is surjective.
We refer to $X$ as the associated \emph{invariant set}, and henceforth
always consider piecewise linear maps as dynamical systems $T:X\to X$.
\end{defn}

\begin{remark}\label{openingremark}
\item[\, (a)]
The pairwise disjointness of the $X_i$ means the invariant set $X$ is a Cantor set
(and is \emph{self-similar}, cf.~\cite[Ch.~9]{falconer}), whose Hausdorff dimension is the unique value $s$ such that
$\sum_{i=1}^q |X_i|^s =1$ (a result essentially due to Moran \cite{moran}, see also e.g.~\cite[Thm.~9.3]{falconer}).
\item[\, (b)] A minor variant of Definition \ref{piecewiselinearmapdefn} would have been to only insist that the intervals $X_i$ have pairwise disjoint interiors (i.e.~allow possible intersections at their endpoints); this would have involved choosing the value of $T$ at any such points of intersection, but otherwise the theory would have been identical to that developed here.
\item[\, (c)] Since each $X_i$ has length strictly smaller than 1, the derivative
$\pm |X_i|^{-1}$ of the piecewise linear map on $X_i$ is in modulus strictly larger than 1, so in particular the map is
\emph{expanding}.
It should be noted that the discussion in the following subsections \ref{lyap2subsection}, \ref{characterisationssubsection} and \ref{derivativessubsection} in fact applies to more general expanding maps (i.e.~where the restriction to each $X_i$ is a diffeomorphism onto $[0,1]$ with derivative strictly larger than 1 in modulus),
and only later (from \S \ref{pwlinearsubsection} onwards) do we require the piecewise linear assumption.
\item[\, (d)] Our piecewise linear maps were referred to as \emph{linear cookie-cutters} in \cite{iommikiwi}, 
following e.g.~\cite{bedford, bohrrand,sullivan}.
\item[\, (e)] In Section \ref{twobranchsection} we shall be concerned with the general two-branch case, i.e.~$q= 2$.
In Section \ref{manyinflectionssection} we shall deal with particularly large values of $q$ in order to guarantee Lyapunov spectra with many points of inflection. 
\end{remark}

\begin{notation}
Let $\m$ denote the set of $T$-invariant Borel probability measures on $X$.
\end{notation}

\subsection{Lyapunov exponents and the Lyapunov spectrum}\label{lyap2subsection}

\begin{defn}
 The \emph{Lyapunov exponent} $\lambda(x)$ of a point $x \in X$ is
 defined by 
 $$\lambda(x) =
 \lim_{n \to+\infty} \frac{1}{n} \log |(T^n)'(x)|
 $$
 whenever this limit exists,
and 
 for a measure $\mu\in\m$, its \emph{Lyapunov exponent} $\Lambda(\mu)$ is defined by
 $$
 \Lambda(\mu)= \int \log|T'|\, d\mu \,.
 $$
  \end{defn}

\begin{remark}
Naturally there is a relation between the two notions of Lyapunov exponent: if $\mu\in\m$ is ergodic then
 $\lambda(x)=\Lambda(\mu)$ for $\mu$-almost every $x$, by the ergodic theorem,
 since $\log |(T^n)'(x)| = \sum_{i=0}^{n-1} \log |T'(T^ix)|$.
\end{remark}

\begin{defn}
Since $\log|T'|$ is continuous, and $\m$ is both convex and weak-$*$ compact (see e.g.~\cite{walters}), it follows that the set of all possible Lyapunov exponents is a closed interval, which we shall denote by $A = [\alpha_{\min}, \alpha_{\max}]$.
More precisely, if $X'$ denotes the set of those $x\in X$ for which 
$\lambda(x) =
 \lim_{n \to+\infty} \frac{1}{n} \log |(T^n)'(x)|$ exists, then 
the \emph{domain}
$A$ is defined by
 $$
 A=[\alpha_{\min}, \alpha_{\max}] = \Lambda(\m) = \lambda(X') \,.
 $$
 \end{defn}

\begin{remark}
 Note that the endpoints $\alpha_{\min}$ and $\alpha_{\max}$ are, respectively, the minimum and the maximum Lyapunov exponent, and can be characterised as
 $$
 \alpha_{\min} = \min_{\mu\in\m} \Lambda(\mu) = \min_{x\in X'} \lambda(x)
 $$
 and
 $$
 \alpha_{\max} = \max_{\mu\in\m} \Lambda(\mu) = \max_{x\in X'} \lambda(x) \,.
 $$
\end{remark}

\begin{notation}
For $\alpha\in \R$ let us write 
$$
X_\alpha = \lambda^{-1}(\alpha) = \{x\in X': \lambda(x)=\alpha\}  \,,
$$
$$\m_\alpha = \Lambda^{-1}(\alpha) = \{\mu\in\m: \Lambda(\mu)=\alpha\} \,,$$
and for a continuous function $\phi:X\to\R$ we write 
$$
X_\alpha(\phi) = \left\{x\in X: \lim_{n\to\infty} \frac{1}{n}\sum_{i=0}^{n-1}\phi(T^ix) = \alpha \right\}\,,
$$
$$\m_\alpha(\phi) = \left\{\mu\in\m: \int \phi\, d\mu =\alpha\text{ and }\Lambda(\mu)<\infty\right\}\,,$$
so that $X_\alpha=X_\alpha(\log|T'|)$ and $\m_\alpha=\m_\alpha(\log|T'|)$.
\end{notation}

We shall be interested in the Hausdorff dimension
(denoted $\dim_H$) of the level sets $X_\alpha$, for $\alpha\in A$.
Recall (see e.g.~\cite{falconer, pesinbook}) that
 $\dim_H(X_\alpha):= \inf\{\delta \hbox{ : } H^\delta(X_\alpha) = 0\}$,
where
$$
H^\delta(X_\alpha) := 
\lim_{\epsilon \to 0} \inf
\left\{
\sum_i
\text{diam}(U_i)^\delta :  \{U_i\} \text{ is an open cover of }X_\alpha \text{ with each }
\text{diam}(U_i) \leq \epsilon
\right\}
\,.
$$

\begin{defn}
The \emph{Lyapunov spectrum}\footnote{Also sometimes referred to as the \emph{multifractal spectrum of the Lyapunov exponent}.} is the function $L:A\to\R$ defined by
$$
L(\alpha) = \dim_H(X_\alpha)\,.
$$
\end{defn}

%\begin{figure}
%  \centerline{\includegraphics[scale=0.9]{spectrumFun.pdf}}
%  \caption{A typical plot of the Lyapunov spectrum.}
%  \label{sample}
%\end{figure}

\begin{remark}\label{samelengthremark}
In the special case that all the intervals $X_i$ have equal length, the modulus of the derivative
of the piecewise linear map $T$ is constant, so the domain $A$ is a singleton,
and the Lyapunov spectrum $L:A\to\R$ is consequently a constant.
\end{remark}

\begin{prop} \cite{weiss}
The Lyapunov spectrum is real analytic on $\mathring{A} = (\alpha_{\min},\alpha_{\max})$. 
\end{prop}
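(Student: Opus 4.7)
The plan is to derive real analyticity of $L$ from standard thermodynamic formalism combined with a Legendre-type representation of the Lyapunov spectrum. First, introduce the auxiliary function
$$p(\beta) := P(-\beta\log|T'|), \qquad \beta\in\R,$$
where $P(\cdot)$ denotes topological pressure. For finite-branch piecewise linear maps the Ruelle--Perron--Frobenius theorem (applied to the associated full shift on $q$ symbols) guarantees that $\beta\mapsto p(\beta)$ is real analytic on all of $\R$. Moreover $p$ is convex by the variational principle, and strictly convex precisely when $\log|T'|$ is not cohomologous to a constant, i.e.~when the branch lengths $|X_i|$ are not all equal. The degenerate equal-length case yields $\mathring{A}=\emptyset$ by Remark \ref{samelengthremark}, making the statement vacuous, so we may assume strict convexity.

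Next, invoke the Feng--Lau--Wu characterisation (Proposition \ref{Lcharacterisation1prop} below) to express $L$ as a Legendre-type transform: for each $\alpha\in\mathring{A}$,
$$\alpha L(\alpha) \;=\; \inf_{\beta\in\R}\bigl(p(\beta) + \alpha\beta\bigr).$$
Strict convexity makes $p'$ a strictly increasing real analytic diffeomorphism from $\R$ onto $(-\alpha_{\max},-\alpha_{\min})$, so for each $\alpha\in\mathring{A}$ the infimum is attained at a unique interior point $\beta(\alpha)$, characterised by $p'(\beta(\alpha))=-\alpha$.

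Finally, apply the implicit function theorem to the equation $p'(\beta)+\alpha = 0$: since $p''(\beta(\alpha))>0$, the map $\alpha\mapsto\beta(\alpha)$ is real analytic on $\mathring{A}$. Substituting back,
$$L(\alpha) \;=\; \frac{p(\beta(\alpha)) + \alpha\beta(\alpha)}{\alpha}$$
exhibits $L$ as a composition and quotient of real analytic functions, with non-vanishing denominator because expansion forces $\alpha\ge\alpha_{\min}>0$. Hence $L$ is real analytic on $\mathring{A}$. The only substantive ingredient is the variational characterisation itself, which encodes the standard multifractal upper and lower bounds on $\dim_H(X_\alpha)$ via Gibbs measures with Lyapunov exponent $\alpha$; once that is in place the analyticity is a soft consequence of the classical real analyticity of pressure together with the implicit function theorem.
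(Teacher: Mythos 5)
Your argument is correct and is essentially the route the paper itself takes: the paper states this proposition as a citation to \cite{weiss}, but the machinery it then develops in \S\ref{characterisationssubsection}--\S\ref{derivativessubsection} --- real analyticity and strict convexity of $p$, the identity $p'\circ\tau=id$ making $\tau=(p')^{-1}$ a $C^\omega$ diffeomorphism, and the representation $L=\bigl(\frac{p}{p'}-id\bigr)\circ(p')^{-1}$ of (\ref{Lcharacterisation4}) --- is exactly your Legendre-transform argument up to the sign convention $t\mapsto-\beta$. The only point worth flagging is that your display $\alpha L(\alpha)=\inf_\beta\bigl(p(\beta)+\alpha\beta\bigr)$ does not follow from Proposition~\ref{Lcharacterisation1prop} alone but also needs the duality step of Proposition~\ref{Lcharacterisation2prop} (that the entropy maximum over $\m_\alpha$ is attained at the equilibrium state $m(\tau(\alpha))$), which you correctly acknowledge as the substantive ingredient.
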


\begin{defn}
A point $\alpha\in \mathring{A}=(\alpha_{\min},\alpha_{\max})$ 
which is a point of inflection of $L$ (i.e.~such that $L''(\alpha)=0$)
will be called a \emph{Lyapunov inflection}.
\end{defn}

\medskip

\subsection{Characterisations of the Lyapunov spectrum}\label{characterisationssubsection} \hfill\\

\begin{notation}
For a measure $\mu\in\m$, let $h(\mu)$ denote its \emph{entropy}.
We refer to $h:\m\to\R$ as the \emph{entropy map}. 
\end{notation}

The Lyapunov spectrum $L$ admits the following characterisation in terms of entropy:

\begin{prop}\label{Lcharacterisation1prop}
For a piecewise linear map,
if $\alpha\in A$
%[\alpha_{\min},\alpha_{\max}]$ 
then
\begin{equation}\label{Lcharacterisation1}
L(\alpha) = \frac{1}{\alpha} \max_{\mu\in \m_\alpha} h(\mu)\,.
\end{equation}
\end{prop}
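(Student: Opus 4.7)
The plan is to establish the two inequalities $L(\alpha) \ge h(\mu)/\alpha$ for each ergodic $\mu\in\m_\alpha$ and the matching upper bound $L(\alpha) \le \sup_{\mu\in\m_\alpha} h(\mu)/\alpha$, and then argue that the supremum is attained.

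For the lower bound, fix an ergodic $\mu\in\m_\alpha$. By the Birkhoff ergodic theorem applied to $\log|T'|$, we have $\mu(X_\alpha)=1$. Moreover, for each cylinder $[i_1,\ldots,i_n]$ determined by the first $n$ symbols in the coding of $x\in X$, the piecewise linear structure gives the exact diameter $\prod_{k=1}^n |X_{i_k}| = \exp(-S_n\log|T'|(x))$, where $S_n \phi=\sum_{k=0}^{n-1} \phi\circ T^k$. Combining the Shannon--McMillan--Breiman theorem with this identity, the pointwise (local) dimension of $\mu$ equals $h(\mu)/\alpha$ at $\mu$-almost every $x$, and a Frostman-type mass distribution argument then yields $\dim_H X_\alpha \ge h(\mu)/\alpha$. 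Since for any $\mu\in\m_\alpha$ one can find an ergodic $\nu\in\m_\alpha$ with $h(\nu)\ge h(\mu)$ (using that the unique equilibrium state of $-t\log|T'|$ for the appropriate $t$ is ergodic, or directly via ergodic decomposition combined with concavity of the map $\mu\mapsto h(\mu)/\Lambda(\mu)$), passing to the supremum gives $L(\alpha)\ge \sup_{\mu\in\m_\alpha} h(\mu)/\alpha$.

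For the upper bound, we cover $X_\alpha$ at scale $n$ by the cylinders $[i_1,\ldots,i_n]$ that meet it. For large $n$ each such cylinder has $S_n\log|T'|\approx n\alpha$ on it, so its diameter is approximately $e^{-n\alpha}$. It remains to count the relevant cylinders. For each small $\epsilon>0$, the exponential growth rate of those cylinders on which the Birkhoff average of $\log|T'|$ lies in $(\alpha-\epsilon,\alpha+\epsilon)$ is bounded by the constrained entropy
$$s(\alpha,\epsilon) := \sup\{h(\nu) : \nu\in\m,\ \Lambda(\nu)\in(\alpha-\epsilon,\alpha+\epsilon)\},$$
by the variational principle. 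Letting $\epsilon\to 0$ and using upper semi-continuity of $h$ together with continuity of $\Lambda$ yields the matching upper bound (compare \cite{flw} for this multifractal step, which rests on the Legendre duality with $t\mapsto P(-t\log|T'|)$).

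Attainment of the maximum follows because $h:\m\to\R$ is upper semi-continuous (piecewise linear expanding maps are expansive), $\Lambda:\m\to\R$ is continuous, and $\m$ is weak-$*$ compact; hence $\m_\alpha$ is a non-empty closed subset on which the upper semi-continuous function $h$ attains its supremum. The main obstacle is the counting step in the upper bound: relating the exponential growth rate of cylinders with prescribed Birkhoff average to the constrained entropy supremum requires the multifractal apparatus, which is why we invoke \cite{flw} rather than proceeding from scratch.
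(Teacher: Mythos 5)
Your proof is correct and ultimately rests on the same ingredient as the paper's: the paper simply cites Feng--Lau--Wu \cite{flw} (Thm.~1.1, which gives $\dim_H X_\alpha(\phi)=\max_{\mu\in\m_\alpha(\phi)}h(\mu)/\Lambda(\mu)$) and specializes to $\phi=\log|T'|$, so that $\Lambda(\mu)=\alpha$ on $\m_\alpha$ and the ratio becomes $h(\mu)/\alpha$. Your additional sketch of the lower bound, the cylinder-counting upper bound and the attainment of the maximum is a reasonable outline of what lies behind that citation, though note that your parenthetical ergodic-decomposition alternative does not by itself produce an ergodic measure in $\m_\alpha$ (the ergodic components of $\mu$ need not have Lyapunov exponent $\alpha$), so the equilibrium-state route is the one to rely on there.
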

\begin{proof}
The identity
\begin{equation}\label{generaldimensionformula}
\text{dim}_H X_\alpha(\phi) = \max_{\mu\in\m_\alpha(\phi)} \frac{h(\mu)}{\Lambda(\mu)}
\end{equation}
was established in \cite[Thm.~1.1]{flw}.

In the special case $\phi=\log |T'|$, with $\int \phi\, d\mu = \Lambda(\mu)$, then $\m_\alpha(\phi)=\m_\alpha$
and $\Lambda(\mu)=\alpha$ for all $\mu\in \m_\alpha(\phi)=\m_\alpha$, and $X_\alpha(\phi) = X_\alpha$, therefore
(\ref{generaldimensionformula}) yields the required identity
$
L(\alpha) = \text{dim}_H X_\alpha =
 \frac{1}{\alpha} \max_{\mu\in \m_\alpha} h(\mu)\,.
 $
\end{proof}

\pagebreak

\begin{remark}
\item[\, (a)]
The characterisation (\ref{Lcharacterisation1}) was implicit in the work of Weiss \cite{weiss}, and appeared explicitly in Kesseb\"ohmer \& Stratmann \cite{ks} in the setting of the continued fraction map (cf.~the discussion in
\cite[p.~539]{iommikiwi}). It was generalised
in  \cite[Thm.~1.3]{iommijordan} to  a wider class of countable branch expanding maps
(see also \S \ref{infinitebranchsection}).
\item[\, (b)]
The characterisation (\ref{Lcharacterisation1}) shows in particular that 
\begin{equation}\label{concaveoveridentity}
L(\alpha)=\frac{C(\alpha)}{\alpha}
\end{equation}
 for a concave map $C$
(since the entropy map $h$ is affine \cite[Thm.~8.1]{walters},
 $C(\alpha)= \max_{\mu\in \m_\alpha} h(\mu)$ is concave).
%In view of (\ref{concaveoveridentity}), it is unsurprising that $L$ is in general not itself concave,
%as noted by Iommi \& Kiwi \cite{iommikiwi}.
\end{remark}

On the interior $\mathring{A} = (\alpha_{\min},\alpha_{\max})$ of the domain,
the Lyapunov spectrum $L$ can be characterised in terms of the equilibrium measures associated to potential
functions of the form $t\log|T'|$, $t\in\R$.
To make this precise, we define the following two maps $m$ and $\tau$:

\begin{notation}
Define $m:\R\to\m$ by letting $m(t)$ be the equilibrium measure
 for $t\log|T'|$,
i.e.~the unique measure which maximizes the quantity $h(\mu)+\int \log|T'|\, d\mu$ over all $\mu\in\m$
(see e.g.~\cite{parrypollicott, ruelle, walters}).

Define $\tau:\mathring{A}\to \R$ by
$$\tau=(\Lambda\circ m)^{-1}\,,$$
i.e.~$\tau(\alpha)$ is the unique real number such that $m(\tau(\alpha))$ has Lyapunov exponent equal to $\alpha$.
\end{notation}

Then:

\begin{prop} \label{Lcharacterisation2prop}
For a piecewise linear map,
if $\alpha\in \mathring{A} = (\alpha_{\min},\alpha_{\max})$ then
$$
L(\alpha) = \frac{1}{\alpha}  h(m(\tau(\alpha)))\,.
$$
That is, on the  interior $\mathring{A} = (\alpha_{\min},\alpha_{\max})$ of the domain,
\begin{equation}\label{Lcharacterisation2}
L = \frac{h\circ m\circ \tau}{id}\,.
\end{equation}
%where $t=t(\alpha)$ is the unique real number $t$ such that the equilibrium state $m_t$ (for $t\log|T'|$) satisfies $\lambda(m_t)=\alpha$.
\end{prop}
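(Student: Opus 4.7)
The plan is to derive the claim directly from Proposition \ref{Lcharacterisation1prop} by identifying the maximiser of the entropy $h$ over the set $\m_\alpha$ as the equilibrium measure $m(\tau(\alpha))$. Since $\tau(\alpha)$ is defined to be the unique real $t$ with $\Lambda(m(t))=\alpha$, the measure $m(\tau(\alpha))$ automatically lies in $\m_\alpha$, so all that remains is to check that no other $\mu\in\m_\alpha$ achieves a strictly larger entropy.

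The key step is a one-line Lagrange-multiplier-type argument coming straight from the variational principle defining $m(t)$. Writing $t=\tau(\alpha)$, the fact that $m(t)$ maximises $h(\nu)+t\int\log|T'|\,d\nu$ over all $\nu\in\m$ gives
\[
h(m(t))+t\,\Lambda(m(t)) \;\ge\; h(\mu)+t\,\Lambda(\mu) \qquad \text{for every } \mu\in\m.
\]
Restricting to $\mu\in\m_\alpha$, both Lyapunov exponents on the two sides equal $\alpha$, so the $t\alpha$ contributions cancel and we are left with $h(m(\tau(\alpha)))\ge h(\mu)$. Geometrically, this is simply the standard observation that the affine functional $\nu\mapsto h(\nu)+t\,\Lambda(\nu)$ is globally maximised at $m(t)$, and therefore is in particular maximised on any level set of $\Lambda$ that meets $m(t)$; since entropy differs from this functional by a constant on such a level set, entropy itself is maximised at $m(t)$ there.

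Combining this with the characterisation (\ref{Lcharacterisation1}) immediately yields
\[
L(\alpha) \;=\; \frac{1}{\alpha}\max_{\mu\in\m_\alpha} h(\mu) \;=\; \frac{h(m(\tau(\alpha)))}{\alpha},
\]
which is (\ref{Lcharacterisation2}). The only mild subtlety I foresee, not really an obstacle, is the implicit well-definedness of $\tau$: one needs $\Lambda\circ m$ to be a bijection from $\R$ onto $\mathring{A}$. This follows from the standard formula (\ref{pderivformula}) for the derivative of pressure, which identifies $\frac{d}{dt}P(t\log|T'|)$ with $\Lambda(m(t))$, together with the strict convexity of the pressure function; strict convexity holds whenever $\log|T'|$ is not cohomologous to a constant, and the complementary degenerate case is already handled by Remark \ref{samelengthremark}.
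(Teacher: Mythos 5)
Your argument is correct and is essentially identical to the paper's own proof: both reduce the claim to showing $\max_{\mu\in\m_\alpha}h(\mu)=h(m(\tau(\alpha)))$ via the variational inequality $h(m(t))+t\Lambda(m(t))\ge h(\mu)+t\Lambda(\mu)$ with the $t\alpha$ terms cancelling on the level set $\m_\alpha$. Your closing remark on the well-definedness of $\tau$ is a sensible addition, and it matches the paper's separate observation that $p$ is strictly convex so $p'$ is a diffeomorphism onto its image.
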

\begin{proof}
This follows from Proposition \ref{Lcharacterisation1prop} (i.e.~the characterisation (\ref{Lcharacterisation1})), together with the simple fact that
$\max_{\mu\in \m_\alpha} h(\mu) = h(m(\tau(\alpha)))$ 
(which holds because $h(m(\tau(\alpha)))+\tau(\alpha)\alpha \ge h(\mu)+\tau(\alpha)\Lambda(\mu)$
for all $\mu\in\m$, thus
$h(m(\tau(\alpha)))+\tau(\alpha)\alpha \ge h(\mu)+\tau(\alpha)\alpha$
for all $\mu\in\m_\alpha$).
%(see e.g.~\cite{bohrrand}, \cite[Thm.~2(b)]{jtams}).
\end{proof}

\begin{notation}
Recall that for a general continuous function $\phi:X\to\R$, the \emph{pressure} $P(\phi)$ is defined
(see e.g.~\cite{parrypollicott, ruelle, walters}) by 
\begin{equation}\label{pressurelimitdefn}
P(\phi) = \lim_{n\to\infty}
\frac{1}{n}\log \sum_{T^n(x)=x} e^{\phi(x)+\phi(Tx)+\ldots +\phi(T^{n-1}x)}\,,
\end{equation}
and admits the well known alternative characterisation
$$P(\phi)=\max_{\mu\in \m} \left( h(\mu) +\int \phi\, d\mu \right) \,.$$
Now define
 $p:\R\to\R$ by 
$$p(t)=P(t \log|T'|)\,.$$ 
\end{notation}

The function $p$ is $C^\omega$ and strictly convex, so its derivative
$p'$ is an orientation-preserving $C^\omega$ diffeomorphism onto its image.
Clearly $p(t) = P(t \log|T'|) = h(m(t))+\int t \log|T'|\, dm(t)$, so in particular
$p(\tau(\alpha)) = h(m(\tau(\alpha))) + \alpha \tau(\alpha)$ for all $\alpha\in\mathring{A}$, in other words
\begin{equation}\label{pequation}
p\circ\tau = h\circ m\circ \tau + id.\tau\quad\text{on }\mathring{A}\,.
\end{equation}
We deduce the following result (which is well known, see e.g.~\cite[Eq.~(3), p.~539]{iommikiwi} which differs superficially due to usage of $P(-t\log|T'|)$ rather than the $P(t\log|T'|)$ considered here):

\begin{prop}
For a piecewise linear map,
on 
the  interior 
$\mathring{A} = (\alpha_{\min}, \alpha_{\max})$, 
%of the Lyapunov exponent interval, 
the Lyapunov spectrum $L$ can be written as
\begin{equation}\label{Lcharacterisation3}
L = \frac{p\circ \tau}{id} - \tau\,.
\end{equation}
% Let $\alpha\in(\alpha_{\min},\alpha_{\max})$.
%If $p(t):=P(t\log|T'|)$, and
%$t=t(\alpha)$ is the unique real number $t$ such that the equilibrium state $m_t$ (for $t\log|T'|$) satisfies
%$\lambda(m_t)=\alpha$, then
%\begin{equation}\label{Lcharacterisation3}
%L(\alpha)=\frac{1}{\alpha} \left( p(t(\alpha)) - \alpha t(\alpha)\right) \,.
%\end{equation}
\end{prop}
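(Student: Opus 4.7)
The plan is to read off this formula as an immediate algebraic consequence of the two results already established just before the statement, namely the characterisation in Proposition \ref{Lcharacterisation2prop} and the identity (\ref{pequation}) expressing the pressure along $\tau$.

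First I would fix $\alpha\in\mathring{A}=(\alpha_{\min},\alpha_{\max})$ and recall from Proposition \ref{Lcharacterisation2prop} that
$$L(\alpha)=\frac{h(m(\tau(\alpha)))}{\alpha}.$$
Then I would invoke the variational principle via the fact that $m(t)$ is the equilibrium measure for $t\log|T'|$, so
$$p(t)=P(t\log|T'|)=h(m(t))+t\int\log|T'|\,dm(t)=h(m(t))+t\,\Lambda(m(t)).$$
Specialising to $t=\tau(\alpha)$ and using $\Lambda(m(\tau(\alpha)))=\alpha$ (the defining property of $\tau$) gives
$$p(\tau(\alpha))=h(m(\tau(\alpha)))+\alpha\,\tau(\alpha),$$
which is precisely the pointwise content of (\ref{pequation}).

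Solving for $h(m(\tau(\alpha)))$ and substituting into the expression for $L(\alpha)$ yields
$$L(\alpha)=\frac{p(\tau(\alpha))-\alpha\,\tau(\alpha)}{\alpha}=\frac{p(\tau(\alpha))}{\alpha}-\tau(\alpha),$$
which is exactly (\ref{Lcharacterisation3}) written pointwise. Since this holds for every $\alpha\in\mathring{A}$, the functional identity $L=\dfrac{p\circ\tau}{id}-\tau$ follows on $\mathring{A}$.

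There is no real obstacle here: the only non-trivial input is the characterisation of $L$ through the entropy of the equilibrium measure (already proved as Proposition \ref{Lcharacterisation2prop}) and the standard identification of $p(t)$ with $h(m(t))+t\,\Lambda(m(t))$ coming from the variational principle for pressure. The derivation is otherwise a one-line algebraic rearrangement, so the proof is essentially a bookkeeping step assembling (\ref{Lcharacterisation2}) and (\ref{pequation}).
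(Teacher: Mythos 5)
Your proof is correct and follows exactly the paper's route: the paper's own proof is the one-line observation that the identity follows from Proposition \ref{Lcharacterisation2prop} together with (\ref{pequation}), and your pointwise rearrangement simply spells out that bookkeeping step (including a re-derivation of (\ref{pequation}) from the variational principle, which the paper had already recorded).
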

\begin{proof}
This follows from Proposition \ref{Lcharacterisation2prop} 
%(i.e.~the characterisation (\ref{Lcharacterisation2})),
together with the identity (\ref{pequation}).
%together with the fact that
%$$
%p(t)=P(t\log|T'|) = h(m_{t(\alpha)})+\int t(\alpha)\log|T'|\, dm_{t(\alpha)} 
%= h(m_{t(\alpha)})+\alpha t(\alpha)
%\, .
%$$
\end{proof}

The $C^\omega$ diffeomorphism
 $p':\R\to(\alpha_{\min},\alpha_{\max})$ 
satisfies (see e.g.~\cite[p.~60]{parrypollicott}, \cite[p.~133]{ruelle})
\begin{equation}\label{pderivformula}
p'(t) = \int \log|T'|\, dm(t) = \Lambda(m(t))\,,
\end{equation}
so in particular
$p'(\tau(\alpha)) = \Lambda(m(\tau(\alpha))) = \alpha$, in other words
\begin{equation}\label{inverses}
p' \circ \tau = id\,,
\end{equation}
so $p':\R\to \mathring{A}$ and $\tau:\mathring{A}\to\R$ are inverses of each other.
It follows that the Lyapunov spectrum $L$ can be expressed purely in terms of the function $p$ as follows:

\begin{prop} 
For a piecewise linear map,
on the interior $\mathring{A} = (\alpha_{\min}, \alpha_{\max})$, the Lyapunov spectrum $L$ can be expressed as
\begin{equation}\label{Lcharacterisation4}
L = \left( \frac{p}{p'} - id\right)\circ (p')^{-1} \,.
\end{equation}
%where $p(t):=P(t\log|T'|)$.
\end{prop}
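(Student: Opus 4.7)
The plan is straightforward: the identity follows by a single substitution, using the previous proposition together with the inversion relation \eqref{inverses}. I would proceed in two short steps.

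First, I would start from \eqref{Lcharacterisation3}, namely $L = (p\circ\tau)/id - \tau$ on $\mathring{A}$, and invoke \eqref{inverses} (which asserts $p'\circ\tau = id$ on $\mathring{A}$) to rewrite the denominator. This gives
$$L = \frac{p\circ \tau}{p'\circ \tau} - \tau = \left(\frac{p}{p'}\right)\circ \tau - \tau = \left(\frac{p}{p'} - id\right)\circ \tau.$$
Second, since \eqref{inverses} exhibits $\tau:\mathring{A}\to\R$ and $p':\R\to\mathring{A}$ as mutual inverses, I would substitute $\tau = (p')^{-1}$ to arrive at \eqref{Lcharacterisation4}.

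Since the argument reduces to a single substitution once \eqref{Lcharacterisation3} and \eqref{inverses} are in hand, there is no genuine obstacle. The one point worth flagging, to keep the notation consistent, is that the two occurrences of $id$ play distinct roles: the $id$ in the denominator of \eqref{Lcharacterisation3} is the identity on $\mathring{A}$ (evaluating to the Lyapunov exponent $\alpha$ at which $L$ is being computed), whereas the $id$ inside the composition in \eqref{Lcharacterisation4} is the identity on $\R$ (the codomain of $\tau$, and the common domain of $p$ and $p'$). Both identifications are immediate from \eqref{inverses} and cause no ambiguity.
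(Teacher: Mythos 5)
Your argument is correct and is exactly the paper's (one-line) proof, spelled out: substitute $\alpha = p'(\tau(\alpha))$ from \eqref{inverses} into the denominator of \eqref{Lcharacterisation3}, factor the composition through $\tau$, and identify $\tau=(p')^{-1}$. The remark about the two roles of $id$ is accurate and harmless; nothing further is needed.
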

\begin{proof}
This follows directly from (\ref{Lcharacterisation3}) and (\ref{inverses}).
%Writing $\alpha=\alpha(t)=\int \log|T'|\, dm_{t(\alpha)} = p'(t(\alpha))$ 
%Then
%$$
%L(\alpha(t)) = \frac{p(t)}{\alpha(t)} - t
%$$
%or
%$$
%L(p'(t)) = \frac{p(t)}{p'(t)} - t
%$$
%or
%$$
%L\circ p' = \frac{p}{p'} - \rm{id}\,,
%$$
%and the required identity (\ref{Lcharacterisation4}) follows, as $p'$ is invertible.
\end{proof}

\subsection{Formulae for derivatives of the Lyapunov spectrum}\label{derivativessubsection} \hfill\\

The identity (\ref{Lcharacterisation4}) yields the following formula for the derivative of the Lyapunov spectrum:

\begin{prop}\label{firstderivofLprop}
For a piecewise linear map,
on the interior $\mathring{A} = (\alpha_{\min}, \alpha_{\max})$, the first derivative $L'$ of the Lyapunov spectrum can be expressed as
\begin{equation}\label{Lfirstderiv}
L' =  \frac{- \, p\circ (p')^{-1}}{id^2}\,.
\end{equation}
In other words, $L'(\alpha) = -p( (p')^{-1}(\alpha))/\alpha^2$ for all $\alpha\in \mathring{A} = (\alpha_{\min}, \alpha_{\max})$.
\end{prop}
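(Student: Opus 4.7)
The plan is to differentiate the expression (\ref{Lcharacterisation4}) directly, exploiting the defining property of $(p')^{-1}$ to obtain the cancellations that produce the simple closed form stated in (\ref{Lfirstderiv}).

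First I would fix $\alpha\in\mathring{A}$ and introduce the shorthand $t=t(\alpha):=(p')^{-1}(\alpha)$, so that $p'(t)=\alpha$ by (\ref{inverses}). Then (\ref{Lcharacterisation4}) becomes the pointwise identity
\begin{equation*}
L(\alpha) \;=\; \frac{p(t(\alpha))}{p'(t(\alpha))} - t(\alpha) \;=\; \frac{p(t(\alpha))}{\alpha} - t(\alpha).
\end{equation*}
Since $p$ is $C^\omega$ and strictly convex, $p''>0$ on $\R$, hence $t(\alpha)=(p')^{-1}(\alpha)$ is smooth with $t'(\alpha)=1/p''(t(\alpha))$; this regularity justifies the differentiation about to be performed.

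Next I would apply the chain and quotient rules to differentiate the displayed identity with respect to $\alpha$:
\begin{equation*}
L'(\alpha) \;=\; \frac{p'(t(\alpha))\,t'(\alpha)\cdot\alpha - p(t(\alpha))}{\alpha^2} - t'(\alpha).
\end{equation*}
Substituting $p'(t(\alpha))=\alpha$, the first summand simplifies to $t'(\alpha) - p(t(\alpha))/\alpha^2$, and the $t'(\alpha)$ terms cancel. This leaves
\begin{equation*}
L'(\alpha) \;=\; -\frac{p(t(\alpha))}{\alpha^2} \;=\; -\frac{p((p')^{-1}(\alpha))}{\alpha^2},
\end{equation*}
which is precisely (\ref{Lfirstderiv}).

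There is essentially no obstacle here: the computation is a two-line exercise once (\ref{Lcharacterisation4}) and (\ref{inverses}) are in place, and the only content worth flagging is that the derivative of the unknown $(p')^{-1}$ drops out of the final formula thanks to the exact cancellation forced by $p'\circ(p')^{-1}=id$. The main thing to verify for rigour is that $p''$ does not vanish on $\R$, which follows from strict convexity of $p$ and guarantees both that $(p')^{-1}$ is differentiable on $\mathring{A}$ and that the chain-rule step above is legitimate.
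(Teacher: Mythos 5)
Your proof is correct and follows essentially the same route as the paper: both differentiate the identity (\ref{Lcharacterisation4}) via the chain rule and use $p'\circ(p')^{-1}=id$ to produce the cancellation (in the paper the factor $p''$ cancels against $((p')^{-1})'=1/(p''\circ(p')^{-1})$, which is exactly your cancellation of the $t'(\alpha)$ terms). Nothing further is needed.
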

\begin{proof}
Now $L = \left( \frac{p}{p'} - id\right)\circ (p')^{-1}$ from (\ref{Lcharacterisation4}), and differentiating this identity yields
$$
L'=\left(  \frac{p}{p'} - id \right)' \circ (p')^{-1} . ( (p')^{-1})'
=
\left( \frac{-p\, p''}{(p')^2}\right)\circ (p')^{-1} . \frac{1}{p''\circ (p')^{-1}}
%=
%\left( \frac{-p}{(p')^2}\right)\circ (p')^{-1}
=
 \frac{- \, p\circ (p')^{-1}}{id^2} \,,
$$
as required.
\end{proof}

\begin{cor}
For a piecewise linear map,
the Lyapunov spectrum $L:A\to\R$ has precisely one critical point, namely
%its global maximum 
at 
%the point 
$\alpha = p'(-\dim_H(X)) = \Lambda( m(-\dim_H(X)))$.
\end{cor}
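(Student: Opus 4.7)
The plan is to read off the critical points of $L$ directly from Proposition \ref{firstderivofLprop}. That proposition gives $L'(\alpha) = -p((p')^{-1}(\alpha))/\alpha^2$ on $\mathring{A}$. Since $T$ is expanding, $\log|T'|>0$, so every Lyapunov exponent is strictly positive, hence $\alpha>0$ on $A$ and the $\alpha^2$ in the denominator is never zero. Consequently $L'(\alpha)=0$ if and only if $p(t)=0$, where $t=(p')^{-1}(\alpha)$. The whole problem therefore reduces to counting and identifying the zeros of $p:\mathbb{R}\to\mathbb{R}$.

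Next I would establish that $p$ is strictly increasing, and hence has at most one zero. By the derivative formula (\ref{pderivformula}), $p'(t)=\Lambda(m(t))=\int \log|T'|\,dm(t)$. Since each branch of $T$ has derivative of modulus strictly greater than $1$ (Remark \ref{openingremark}(c)), the integrand $\log|T'|$ is strictly positive on $X$, so $p'(t)>0$ for every $t\in\mathbb{R}$. Combined with the already noted strict convexity of $p$, this forces $p$ to be a strictly increasing real-analytic bijection onto an interval, and in particular to have at most one zero.

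To locate this zero, I would invoke the Moran formula recalled in Remark \ref{openingremark}(a): $\dim_H(X)=s$ is the unique value satisfying $\sum_{i=1}^q |X_i|^s=1$. For a piecewise linear map, $\log|T'|$ is locally constant, equal to $-\log|X_i|$ on $X_i$, so the pressure of the locally constant potential $t\log|T'|$ can be evaluated explicitly as $p(t)=\log\sum_{i=1}^q |X_i|^{-t}$ (for instance from the periodic-orbit definition (\ref{pressurelimitdefn}), or by identifying the leading eigenvalue of the associated transfer operator). Substituting $t=-\dim_H(X)$ gives $p(-\dim_H(X))=\log\sum_{i=1}^q |X_i|^{\dim_H(X)}=\log 1=0$, exhibiting the zero of $p$.

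Combining these two facts, $p(t)=0$ has the unique solution $t=-\dim_H(X)$, so $L'(\alpha)=0$ has the unique solution $\alpha=p'(-\dim_H(X))$ in $\mathring{A}$; and by (\ref{pderivformula}) this value equals $\Lambda(m(-\dim_H(X)))$. I do not expect any substantive obstacle: the argument is essentially a pair of one-line observations (positivity of $\log|T'|$ for monotonicity, and Moran/Bowen for locating the root) applied to the formula from Proposition \ref{firstderivofLprop}. The only thing one should perhaps be careful about is that critical points are sought in the open interval $\mathring{A}$, so no comment is needed for the boundary, where $L$ is not assumed differentiable.
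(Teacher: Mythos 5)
Your proof is correct, and its skeleton is the same as the paper's: both reduce the problem, via Proposition \ref{firstderivofLprop}, to locating the zeros of $p$, since $L'(\alpha)=0$ exactly when $p((p')^{-1}(\alpha))=0$. The difference is in how the zero of $p$ is identified and shown to be unique. The paper disposes of this in one line by citing Bowen's pressure formula, which asserts that $\dim_H(X)$ is the unique $s$ with $P(-s\log|T'|)=0$; existence and uniqueness of the root are both absorbed into that citation. You instead make the two halves explicit: uniqueness from the strict monotonicity $p'(t)=\Lambda(m(t))=\int\log|T'|\,dm(t)>0$ (valid for any expanding map, as in Remark \ref{openingremark}(c)), and existence from the closed form $p(t)=\log\sum_i|X_i|^{-t}$ of (\ref{pressurepiecewiselinearcase}) combined with Moran's equation $\sum_i|X_i|^{\dim_H(X)}=1$. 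This buys a self-contained argument in the piecewise linear case (where Bowen's formula is literally Moran's theorem), at the cost of using the explicit pressure formula, which the paper only introduces in the following subsection; the paper's version, by contrast, applies verbatim to general conformal expanding maps. Your remark that $\alpha>0$ on $A$, so the $\alpha^{2}$ in the denominator causes no trouble, is a small point the paper leaves implicit.
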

\begin{proof}
From (\ref{Lfirstderiv}), $L'(\alpha)=0$ if and only if $p(\tau(\alpha))=0$,
since $\tau=(p')^{-1}$.
That is, $L'(\alpha)=0$ if and only if $P(\tau(\alpha)\log|T'|)=0$, and this occurs
if and only if $\tau(\alpha)=-\dim_H(X)$, by Bowen's pressure formula for the dimension of $X$
(see e.g.~\cite{bowen}, \cite{falconer}, \cite[Ch.~7]{pesinbook}).
Thus $L'(\alpha)=0$ if and only if $\alpha=\tau^{-1}(-\dim_H(X)) = p'(-\dim_H(X))=\int \log|T'|\, dm(-\dim_H(X))
=\Lambda(m(-\dim_H(X)))$.
%To see that $\alpha = p'(-\dim_H(X)) = \Lambda( m(-\dim_H(X)))$  is a global maximum....
\end{proof}

%From Proposition
%\ref{firstderivofLprop}
% we 
 We are now able to derive a formula for the second derivative of the Lyapunov spectrum:

\begin{prop}\label{2ndderivLprop}
For a piecewise linear map,
and for $\alpha\in \mathring{A}= (\alpha_{\min}, \alpha_{\max})$, the second derivative $L''$ of the Lyapunov spectrum
can be expressed as
\begin{equation}\label{Lsecondderiv}
L''(\alpha) = \frac{2p(t)p''(t) - p'(t)^2}{p''(t)p'(t)^3}
\end{equation}
where $t=(p')^{-1}(\alpha) = \tau(\alpha)$.
\end{prop}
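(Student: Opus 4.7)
The plan is to differentiate the formula for $L'$ obtained in Proposition~\ref{firstderivofLprop}, using the chain rule and the inverse function theorem applied to the diffeomorphism $p':\R\to\mathring{A}$.

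First, I would rewrite (\ref{Lfirstderiv}) in the form $L'(\alpha) = -p(\tau(\alpha))/\alpha^2$, recalling from (\ref{inverses}) that $\tau=(p')^{-1}$. Applying the quotient and chain rules gives
\begin{equation*}
L''(\alpha) = -\frac{p'(\tau(\alpha))\,\tau'(\alpha)\cdot\alpha^2 - p(\tau(\alpha))\cdot 2\alpha}{\alpha^4} = -\frac{p'(\tau(\alpha))\,\tau'(\alpha)}{\alpha^2} + \frac{2\,p(\tau(\alpha))}{\alpha^3}\,.
\end{equation*}

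Next I would substitute two further identities. Since $\tau=(p')^{-1}$, the inverse function theorem gives $\tau'(\alpha) = 1/p''(\tau(\alpha)) = 1/p''(t)$; and again by (\ref{inverses}) we have $\alpha = p'(\tau(\alpha)) = p'(t)$. Plugging these in yields
\begin{equation*}
L''(\alpha) = -\frac{1}{p''(t)\,p'(t)} + \frac{2\,p(t)}{p'(t)^3} = \frac{2p(t)p''(t) - p'(t)^2}{p''(t)\,p'(t)^3}\,,
\end{equation*}
after combining over the common denominator $p''(t)\,p'(t)^3$. This is exactly (\ref{Lsecondderiv}).

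There is no real obstacle here: the computation is a direct differentiation, and the only inputs needed, namely the formula (\ref{Lfirstderiv}) for $L'$, the identity $\tau=(p')^{-1}$, and the strict convexity of $p$ (which ensures $p''(t)>0$ so the denominator is nonzero on $\mathring A$), have already been established. The one point worth a brief sanity check is the sign in the quotient-rule step; writing it out as above makes the cancellation with the leading minus sign transparent.
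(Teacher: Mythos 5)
Your proof is correct and follows essentially the same route as the paper: differentiating the formula for $L'$ from Proposition~\ref{firstderivofLprop} via the quotient rule, substituting $\tau'(\alpha)=1/p''(t)$ and $\alpha=p'(t)$, and simplifying. The only difference is cosmetic (you split the expression into two fractions before recombining, and you make the use of the inverse function theorem explicit, which the paper leaves implicit).
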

\begin{proof}
From (\ref{Lfirstderiv}), $L'(\alpha) = -p( (p')^{-1}(\alpha))/\alpha^2$, and differentiation yields
$$
L''(\alpha)
%=
%-\left( \frac{ \alpha^2 \frac{p'( (p')^{-1}(\alpha))}{p''((p')^{-1}(\alpha))} - 2\alpha p((p')^{-1}(\alpha))}{\alpha^4}\right)
=
-\left( \frac{ \alpha^2 \frac{p'( t)}{p''(t)} - 2\alpha p(t)}{\alpha^4}\right)\,.
$$
Now $p'(t)=\alpha$ (since $t=(p')^{-1}(\alpha)$), so
$$
L''(\alpha)
=
-\left( \frac{ p'(t)^2 \frac{p'( t)}{p''(t)} - 2p'(t) p(t)}{p'(t)^4}\right)
= \frac{2p(t)p''(t) - p'(t)^2}{p''(t)p'(t)^3}\,,
$$
as required.
\end{proof}

\begin{cor}
For a piecewise linear map,
the Lyapunov spectrum $L$ has a point of inflection
%\footnote{Do we insist the \emph{first derivative is non-zero}??} 
at $\alpha\in  \mathring{A}= (\alpha_{\min},\alpha_{\max})$ if and only if
\begin{equation}\label{inflectionequationnumerator}
2p(t)p''(t) = p'(t)^2\,,
\end{equation}
where $t=(p')^{-1}(\alpha) = \tau(\alpha)$.
\end{cor}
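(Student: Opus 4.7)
The plan is to derive the corollary as an immediate consequence of Proposition~\ref{2ndderivLprop}. By that proposition, for any $\alpha \in \mathring{A}$ and $t = \tau(\alpha) = (p')^{-1}(\alpha)$, we have the explicit formula
\begin{equation*}
L''(\alpha) = \frac{2p(t)p''(t) - p'(t)^2}{p''(t)\,p'(t)^3}.
\end{equation*}
So the condition $L''(\alpha) = 0$ is equivalent to the numerator vanishing, provided we can justify that the denominator is nonzero at $t = \tau(\alpha)$; once that is in hand, the corollary is immediate.

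The first thing I would check is that $p''(t) \neq 0$. This is built into the preliminaries: the function $p$ is stated to be $C^\omega$ and strictly convex, so $p''(t) > 0$ everywhere on $\R$. (In fact strict convexity of $p$ is what guarantees that $p'$ is an orientation-preserving diffeomorphism onto $\mathring{A}$, which has already been invoked in the excerpt.)

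Next I would check that $p'(t) \neq 0$ at the relevant $t$. Since $t = (p')^{-1}(\alpha)$ by definition, we have $p'(t) = \alpha$, and $\alpha \in \mathring{A} \subset A$. By Remark~\ref{openingremark}(c), every branch of the piecewise linear map $T$ has $|T'| > 1$, so $\log|T'|$ is a strictly positive continuous function on $X$; hence $\Lambda(\mu) = \int \log|T'|\,d\mu > 0$ for every $\mu \in \m$, which means $\alpha_{\min} > 0$ and so $\alpha > 0$ for all $\alpha \in \mathring{A}$. In particular $p'(t) = \alpha > 0$, and therefore $p''(t)\,p'(t)^3 > 0$.

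With the denominator established as strictly positive, the equivalence follows at once: $L''(\alpha) = 0$ if and only if $2p(t)p''(t) - p'(t)^2 = 0$, i.e.~equation \eqref{inflectionequationnumerator} holds, at $t = \tau(\alpha) = (p')^{-1}(\alpha)$. There is no real obstacle here — the only subtlety is remembering to invoke the expanding hypothesis to secure $\alpha > 0$, since without it one might worry about division by $\alpha$ in the formula for $L''$.
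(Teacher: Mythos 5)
Your proposal is correct and follows the same route as the paper, which states this corollary without proof as an immediate consequence of the formula for $L''$ in Proposition~\ref{2ndderivLprop}. Your additional care in verifying that the denominator $p''(t)\,p'(t)^3$ is strictly positive (via strict convexity of $p$ and the positivity of $\alpha$ coming from the expanding hypothesis) is exactly the implicit justification the paper relies on.
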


\begin{remark}
Formulae for the first and second derivatives of the Lyapunov spectrum in terms of entropy appear in
 \cite[\S 8]{iommi}. 
 Proposition \ref{2ndderivLprop} should be compared to the derivation on
 \cite[p.~544]{iommikiwi}\footnote{Note, however, that the exposition on \cite[p.~544]{iommikiwi} is not completely correct: line 13 should read as $\frac{1}{\alpha'(t)} \frac{2\alpha'(t)P(-t\log|T'|)+\alpha(t)^2}{\alpha(t)^3}$, and line 15 should read as
 $\alpha'(t) = -\sigma^2(t) = -P(-t\log|T'|)'' <0$.}.
\end{remark}

\subsection{Lyapunov inflections for piecewise linear maps}\label{pwlinearsubsection} \hfill\\

While the preceding analysis could have been stated in the more general setting of (non-linear) expanding maps, 
henceforth we use the fact that a piecewise linear map $T$ is such that
on each interval $X_i$ the absolute value of its derivative
 $|T'|$ is equal to $|X_i|^{-1}$ (i.e.~the reciprocal of the length of $X_i$). 
 In this case from (\ref{pressurelimitdefn}) we see that the pressure
   is given by
\begin{equation}\label{pressurepiecewiselinearcase}
p(t) = P(t\log|T'|) = \log \left( \sum_{i=1}^q |X_i|^{-t}\right) \,.
\end{equation}

For ease of notation in what follows, we introduce the following function $F$:

\begin{notation}
Define $F=F_T:\R\to\R$ by
\begin{equation}\label{generalFdefn}
F(t) = F_T(t) = \sum_{i=1}^q |X_i|^{t} \,.
\end{equation}
\end{notation}

Clearly
\begin{equation}\label{pF}
p(t) = \log F(-t)\quad\text{for all }t\in\R\,.
\end{equation}

\begin{prop}\label{keyequationprop}
For a piecewise linear map,
% as in Definition \ref{piecewiselinearmapdefn}, 
the corresponding Lyapunov spectrum $L$ has a point of inflection at $\alpha\in(\alpha_{\min},\alpha_{\max})$ if and only if
\begin{equation}\label{keyequation_initial}
\frac{1}{2\log F(s)} =  \frac{F''(s) F(s)}{F'(s)^2} -1
\end{equation}
where
$s= 
%-t= 
-(p')^{-1}(\alpha) = -\tau(\alpha)$,
and $F$ is given by (\ref{generalFdefn}).
\end{prop}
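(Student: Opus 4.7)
The plan is to start from the inflection characterisation established in the corollary following Proposition \ref{2ndderivLprop}, namely that $L$ has an inflection at $\alpha \in \mathring{A}$ if and only if
$$2p(t)p''(t) = p'(t)^2$$
with $t = \tau(\alpha) = (p')^{-1}(\alpha)$. Using the substitution $s = -t$ (so $t = -s$) and the identity (\ref{pF}), namely $p(t) = \log F(-t)$, the entire condition can be rewritten purely in terms of $F$, $F'$, $F''$ evaluated at $s$. Thus the whole proof is a chain-rule computation followed by algebraic manipulation.

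The first step is to compute $p'(-s)$ and $p''(-s)$ from $p(-s) = \log F(s)$. Directly,
\begin{equation*}
p'(-s) = -\frac{F'(s)}{F(s)},
\end{equation*}
and differentiating once more (via quotient rule, with the outer $t \mapsto -s$ contributing another sign) yields
\begin{equation*}
p''(-s) = \frac{F''(s)F(s) - F'(s)^2}{F(s)^2}.
\end{equation*}
I would carry out this step carefully since the signs from differentiating $F(-t)$ and $F'(-t)$ in $t$ must cancel correctly so as to leave a genuinely positive expression for $p''$ (as required by strict convexity of $p$).

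Next, substituting these three expressions into $2p(t)p''(t) = p'(t)^2$ with $t = -s$ produces
\begin{equation*}
2 \log F(s) \cdot \frac{F''(s)F(s) - F'(s)^2}{F(s)^2} = \frac{F'(s)^2}{F(s)^2}.
\end{equation*}
The factors of $F(s)^2$ cancel, and dividing both sides by $2 F'(s)^2 \log F(s)$ rearranges this into the desired form (\ref{keyequation_initial}).

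The one subtlety — and this is the only place where I expect anything delicate — is justifying the division by $\log F(s)$, i.e.\ ruling out $p(t) = 0$ at a prospective inflection. If $p(t) = 0$, then the inflection equation forces $p'(t) = 0$. However, since each branch of $T$ has $|T'| > 1$, we have $p'(t) = \Lambda(m(t)) > 0$ for every $t$, contradicting $p'(t) = 0$. Hence any inflection point has $p(t) \neq 0$, equivalently $F(s) \neq 1$, so the manipulation is valid. (Note that $p(t) = 0$ corresponds precisely to the unique critical point of $L$ identified in the corollary to Proposition \ref{firstderivofLprop}, which is not an inflection.) This gives the biconditional and completes the argument.
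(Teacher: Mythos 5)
Your proposal is correct and follows essentially the same route as the paper: both start from the characterisation $2p(t)p''(t)=p'(t)^2$, substitute $s=-t$ into $p(t)=\log F(-t)$, compute $p'$ and $p''$ in terms of $F$, $F'$, $F''$, and rearrange. Your explicit justification that $p(t)\neq 0$ at any prospective inflection (via $p'(t)=\Lambda(m(t))>0$) is a small point the paper passes over silently when it divides by $2p(t)$, and it is handled correctly.
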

\begin{proof}
Rearranging (\ref{inflectionequationnumerator}) we see that $\alpha$ is an inflection point for $L$ if and only if
\begin{equation}\label{rearrange}
\frac{1}{2p(t)} = \frac{p''(t)}{p'(t)^2}
\end{equation}
where $t=(p')^{-1}(\alpha) = \tau(\alpha)$.

From (\ref{pF}), setting $s=-t = -(p')^{-1}(\alpha) = -\tau(\alpha)$ then $p(t) = \log F(-t)=\log F(s)$, so the lefthand side of (\ref{rearrange}) is equal to the lefthand side of 
(\ref{keyequation_initial}).
Differentiating (\ref{pF}), 
$$
p'(t)=\frac{-F'(-t)}{F(-t)}
$$
and
$$
p''(t) = \frac{F(-t)F''(-t)-F'(-t)^2}{F(-t)^2}\,,
$$
so
$$
\frac{p''(t)}{p'(t)^2} = \frac{F(-t)F''(-t)-F'(-t)^2}{F'(-t)^2} = \frac{F''(s) F(s)}{F'(s)^2} -1\,,
$$
in other words the righthand side of (\ref{rearrange}) is equal to the righthand side of 
(\ref{keyequation_initial}), and the proof is complete.
\end{proof}

\section{Two branch maps have at most two Lyapunov inflections}\label{twobranchsection}

Consider a piecewise linear map with two branches, i.e.~where $q= 2$.
If $|X_1|=|X_2|$ then the Lyapunov spectrum is constant (cf.~Remark \ref{samelengthremark}),
and in particular has no points of inflection.
If $|X_1|\neq |X_2|$ then without loss of generality $|X_1|>|X_2|$, and defining $a=|X_1|^{-1}$ and $b=|X_2|^{-1}$
(so that $b>a>1$), the domain is $A=[\log a, \log b]$.
As noted by
Iommi \& Kiwi \cite[p.~539]{iommikiwi}, 
the Lyapunov spectrum $L$
has the closed form expression
\begin{equation}\label{closedform2branch}
L(\alpha) = \frac{1}{\alpha}\left( 
- \left( \frac{\log b - \alpha}{\log(b/a)}\right)\log\left( \frac{\log b - \alpha}{\log(b/a)}\right)
\,
- 
\,
\left( \frac{\alpha - \log a}{\log(b/a)}\right)\log\left( \frac{\alpha - \log a}{\log(b/a)}\right) \right) \,,
\end{equation}
since 
%the equilibrium measure 
$m(\tau(\alpha))$ is the Bernoulli measure giving mass
$ \frac{\log b - \alpha}{\log(b/a)}$ to $X_1$ and mass
$ \frac{\alpha - \log a}{\log(b/a)}$ to $X_2$,
with entropy
(see e.g.~\cite[Thm.~4.26]{walters}) equal to
$$- \left( \frac{\log b - \alpha}{\log(b/a)}\right)\log\left( \frac{\log b - \alpha}{\log(b/a)}\right)
- \left( \frac{\alpha - \log a}{\log(b/a)}\right)\log\left( \frac{\alpha - \log a}{\log(b/a)}\right)\,,
$$
so that (\ref{closedform2branch}) is a consequence of Proposition \ref{Lcharacterisation2prop}.
%where $b>a>1$.

Iommi \& Kiwi \cite[p.~539]{iommikiwi} 
conjectured that, in the setting of a two branch expanding map,
the Lyapunov spectrum has at most two points of inflection.
In fact the number of such inflections is necessarily even, since $L$ is concave
on some neighbourhood $[\log a, \gamma]$ of the left endpoint of $A$, and some neighbourhood $[\delta,\log b]]$ of the right endpoint of $A$ (see \cite[p.~539]{iommikiwi}).
In the case that the map is piecewise linear, we are able to answer Iommi \& Kiwi's conjecture in the affirmative:

\begin{theorem}\label{lyapunov2inflections}
For a 2-branch piecewise linear map, 
the Lyapunov spectrum $L$ is either concave 
or has precisely two inflection points. 
\end{theorem}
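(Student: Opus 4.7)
The plan is to exploit the closed-form expression (\ref{closedform2branch}) via a linear reparametrization that turns $L$ into a ratio of the binary entropy to a positive affine function, and then to count Lyapunov inflections by controlling the sign of an auxiliary function whose first derivative is strikingly simple.

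Set $u := (\log b - \alpha)/\log(b/a)$, so that $u$ ranges over $(0,1)$ as $\alpha$ ranges over $\mathring{A}$ and $\alpha(u) = u\log a + (1-u)\log b$ is affine in $u$. Because the change of variables is affine, $L''(\alpha) = 0$ if and only if $\frac{d^2}{du^2}L(\alpha(u)) = 0$. Writing $H(u) := -u\log u - (1-u)\log(1-u)$ for the binary entropy, (\ref{closedform2branch}) reads $L(\alpha(u)) = H(u)/\alpha(u)$. Since $\alpha(u)>0$ on $(0,1)$ and $\alpha''(u) \equiv 0$, the Lyapunov inflections correspond bijectively to the zeros on $(0,1)$ of
$$\Phi(u) := \alpha(u)^3 \frac{d^2}{du^2}\!\Bigl(\frac{H}{\alpha}\Bigr)(u) = \alpha(u)^2 H''(u) - 2\alpha(u)\alpha'(u) H'(u) + 2\alpha'(u)^2 H(u).$$

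The hard part is spotting the following telescoping cancellation: differentiating $\Phi$ and using $\alpha''\equiv 0$, all but one term collapses to give the compact identity
$$\Phi'(u) = \alpha(u)^2\, H'''(u).$$
Since $H'''(u) = (1-2u)/(u(1-u))^2$ is strictly positive on $(0,1/2)$ and strictly negative on $(1/2,1)$, $\Phi$ is strictly unimodal with its unique maximum at $u = 1/2$. The clean form of this identity is a consequence of the affineness of $\alpha$ in $u$ combined with the symmetry of $H$ about $u=1/2$, and fits the ``symmetric function with prescribed lower-order derivative sign'' paradigm foreshadowed by Theorem~\ref{generaltheorem}.

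It only remains to determine the sign of $\Phi$ near the endpoints. The asymptotics $H''(u)\sim -1/u$ as $u\to 0^+$ and $H''(u)\sim -1/(1-u)$ as $u\to 1^-$, together with the fact that $\alpha(u)$ stays bounded away from zero while $H$ and $H'$ contribute only bounded or logarithmic terms, give $\Phi(u)\to -\infty$ at both endpoints (equivalently, the concavity of $L$ near $\alpha_{\min}$ and $\alpha_{\max}$ observed in \cite[p.~539]{iommikiwi}). Combined with unimodality this forces the claimed dichotomy: either $\Phi(1/2)\le 0$, in which case $\Phi\le 0$ throughout $(0,1)$, so $L''\le 0$ and $L$ is concave; or $\Phi(1/2)>0$, in which case $\Phi$ has exactly one zero in each of $(0,1/2)$ and $(1/2,1)$, producing precisely two Lyapunov inflections.
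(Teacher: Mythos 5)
Your proposal is correct, and it takes a genuinely different (and leaner) route than the paper. The paper reduces the statement to a general result (Theorem~\ref{generaltheorem}) about $M(x)=\phi(x)/(x-c)$, treats the inflection equation $\phi''(x-c)^2-2\phi'(x-c)+2\phi=0$ as a quadratic in $x-c$, solves it to get two root functions $\Phi^{\pm}$, rules out the $\Phi^{+}$ branch by a positivity argument, and then proves monotonicity of $\Phi^{-}$ on each half-interval via a computation involving $\sqrt{(\phi')^2-2\phi\phi''}$. You instead keep the quadratic expression itself as the auxiliary function $\Phi=\alpha^2H''-2\alpha\alpha'H'+2(\alpha')^2H$ and observe that, because $\alpha''\equiv 0$, its derivative telescopes to $\Phi'=\alpha^2H'''$; I checked the cancellation and it is exact. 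This avoids the square roots and the branch elimination entirely, and the sign change of $H'''$ at $u=1/2$ immediately gives unimodality of $\Phi$, hence at most two zeros. Your endpoint asymptotics ($\Phi\to-\infty$ driven by $\alpha^2H''\sim -(\log b)^2/u$, with $H'$ only logarithmic) then yield the full dichotomy in one stroke, whereas the paper obtains ``at most two'' from Theorem~\ref{generaltheorem} and must invoke the concavity of $L$ near the endpoints from Iommi--Kiwi to upgrade to ``precisely two.'' What the paper's approach buys is the standalone general statement of Theorem~\ref{generaltheorem} for an arbitrary symmetric convex $\phi$ with signed third derivative; but your identity $\Phi'=\psi^2\phi'''$ (for affine denominator $\psi$) would in fact also shorten the proof of that general theorem, so your argument is not less general in any essential way. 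The only point worth flagging is cosmetic: in the degenerate case $\Phi(1/2)=0$ the spectrum is concave but has a single point where $L''$ vanishes; your dichotomy correctly files this under ``concave,'' consistent with the theorem as stated.
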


The following more precise characterisation follows from Theorem \ref{lyapunov2inflections} and \cite[Thm.~A]{iommikiwi}:

\begin{theorem}\label{2branchquantitativetheorem}
For a 2-branch piecewise linear map,
where without loss of generality $|X_1|\ge|X_2|$,
the Lyapunov spectrum $L$ is concave 
if 
$$
\frac{\log |X_1|}{\log |X_2|} \le \frac{\sqrt{2\log 2} +1}{\sqrt{2\log 2} -1} \approx 12.2733202 \,,
$$
and otherwise has precisely two inflection points. 
\end{theorem}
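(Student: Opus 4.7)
The plan is to derive the theorem by combining the qualitative dichotomy established in \theoremref{lyapunov2inflections} with the quantitative concavity criterion of Iommi \& Kiwi \cite[Thm.~A]{iommikiwi}. \theoremref{lyapunov2inflections} tells us that for a 2-branch piecewise linear map, $L$ is either globally concave or has exactly two inflection points; thus the whole task reduces to identifying, in terms of $|X_1|$ and $|X_2|$, the exact boundary between these two alternatives.

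First I would recall the explicit form of $L$ from (\ref{closedform2branch}) and substitute $a=|X_1|^{-1}$, $b=|X_2|^{-1}$. Iommi \& Kiwi's Theorem~A provides a sharp threshold (expressed in their notation in terms of $a$ and $b$) that characterises exactly when $L$ fails to be concave on $A=[\log a,\log b]$. The main task is then to rewrite their threshold in terms of the ratio $\log|X_1|/\log|X_2|=\log a/\log b$ (noting the sign conventions). A direct algebraic manipulation should yield the equivalent condition
\[
\frac{\log|X_1|}{\log|X_2|}\le \frac{\sqrt{2\log 2}+1}{\sqrt{2\log 2}-1}\,,
\]
where the constant $\sqrt{2\log 2}$ arises from the critical relationship between $\log b$ and $\log a$ that makes $L''$ change sign; the precise algebraic identity can be checked by verifying that, at this ratio, the inflection equation (\ref{keyequation_initial}) has a unique double root in $\mathring A$, while for larger ratios it admits two simple roots.

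Finally I would combine the two inputs: if the ratio lies at or below the threshold, then by \cite[Thm.~A]{iommikiwi} the spectrum $L$ is concave, so it has no inflections; otherwise $L$ is not concave, so by \theoremref{lyapunov2inflections} it must have precisely two inflection points. The main obstacle I anticipate is purely bookkeeping: translating Iommi \& Kiwi's threshold (stated for expanding maps in a somewhat different form) into the clean ratio above, and in particular confirming the appearance of $\sqrt{2\log 2}$ via the double-root condition for (\ref{keyequation_initial}). Once this algebraic matching is carried out, the theorem follows immediately as a corollary.
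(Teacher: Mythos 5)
Your proposal takes essentially the same route as the paper: the authors state that the theorem follows directly by combining the dichotomy of Theorem~\ref{lyapunov2inflections} (concave or exactly two inflections) with the quantitative concavity threshold of Iommi \& Kiwi \cite[Thm.~A]{iommikiwi}, which is precisely your argument. The translation of their threshold into the stated ratio is indeed just the bookkeeping you anticipate, and the paper offers no further detail beyond this combination.
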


Theorem \ref{lyapunov2inflections} is in fact a consequence of the following more general result:

\begin{theorem}\label{generaltheorem}
Suppose the $C^3$ function $\phi:[0,1]\to\R$ is negative, convex, symmetric about the point $1/2$,
and such that the third derivative $\phi'''<0$ on $(0,1/2)$ (hence $\phi'''>0$ on $(1/2,1)$).
Then for $c>1$, the function $M$ defined by 
$$M(x)=\frac{\phi(x)}{x-c}$$
has at most two points of inflection (i.e.~at most two zeros of its second derivative $M''$).
\end{theorem}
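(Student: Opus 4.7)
My plan is to bound the zeros of $M''$ by reducing the question to a function whose monotonicity is transparent. Writing $M(x)=\phi(x)/(x-c)$ and applying the quotient rule twice, one obtains
\[
M''(x) = \frac{\phi''(x)(x-c)^2 - 2\phi'(x)(x-c) + 2\phi(x)}{(x-c)^3}.
\]
Because $c>1$, the denominator is nonvanishing on $[0,1]$, so the zeros of $M''$ coincide with those of the numerator, which I denote
\[
H(x) := \phi''(x)(x-c)^2 - 2\phi'(x)(x-c) + 2\phi(x).
\]
It thus suffices to show that $H$ has at most two zeros in $[0,1]$.

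The crucial observation is that differentiating $H$ produces a telescoping cancellation: expanding termwise,
\[
H'(x) = \phi'''(x)(x-c)^2 + 2\phi''(x)(x-c) - 2\phi''(x)(x-c) - 2\phi'(x) + 2\phi'(x) = \phi'''(x)(x-c)^2.
\]
Since $c>1$ the factor $(x-c)^2$ is strictly positive on $[0,1]$, so the sign of $H'$ agrees with the sign of $\phi'''$. By the hypothesis $\phi'''<0$ on $(0,1/2)$ and the stated consequence $\phi'''>0$ on $(1/2,1)$, the function $H$ is strictly decreasing on $[0,1/2]$ and strictly increasing on $[1/2,1]$. Hence $H$ is unimodal on $[0,1]$ with its unique minimum at $x=1/2$.

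A strictly monotone function has at most one zero on any interval, so $H$ has at most one zero in $[0,1/2)$ and at most one in $(1/2,1]$; if $H(1/2)=0$ these merge into a single zero. In every case $H$, and therefore $M''$, has at most two zeros in $[0,1]$, as required.

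I do not anticipate a genuine obstacle: the entire argument pivots on spotting the identity $H'(x)=\phi'''(x)(x-c)^2$, after which the conclusion is a one-line monotonicity consideration. I note in passing that the convexity and negativity hypotheses on $\phi$ are not used in this reasoning; they are presumably included because they hold automatically in the intended application (where, after an affine change of variable, $\phi$ is a negative multiple of the binary entropy $-x\log x-(1-x)\log(1-x)$), and perhaps to support sharper companion statements about when $H$ has zero versus two zeros.
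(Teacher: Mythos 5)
Your proof is correct, and it takes a genuinely different and in fact shorter route than the paper. Both arguments begin by clearing denominators, reducing the problem to counting zeros of the numerator $H(x)=\phi''(x)(x-c)^2-2\phi'(x)(x-c)+2\phi(x)$ on $[0,1]$ (the denominator $(x-c)^3$ being nonvanishing there since $c>1$). The paper then treats $H(x_0)=0$ as a quadratic in $x_0-c$, splits into the two root branches $\Phi^{\pm}$, uses the negativity and convexity of $\phi$ to show the discriminant is nonnegative and to eliminate the $\Phi^{+}$ branch entirely, and finally establishes by a somewhat delicate computation with the square root that $\Phi^{-}$ is strictly decreasing on $(0,1/2)$ and strictly increasing on $(1/2,1)$, so that $-c$ has at most two preimages. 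You instead differentiate $H$ directly and exploit the telescoping identity $H'(x)=\phi'''(x)(x-c)^2$ --- which I have checked and which is exactly the classical identity behind Taylor's theorem, since $H(x)$ is twice the degree-two Taylor polynomial of $\phi$ centred at $x$ and evaluated at $c$. This makes $H$ strictly decreasing on $[0,1/2]$ and strictly increasing on $[1/2,1]$, hence unimodal with at most two zeros, and the conclusion follows in one line. What your approach buys is both brevity and generality: as you note, the hypotheses that $\phi$ is negative, convex and symmetric are never used (only the single sign change of $\phi'''$ and the fact that $c\notin[0,1]$ matter), whereas the paper's route genuinely needs negativity and convexity to control the discriminant and to discard $\Phi^{+}$. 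Your remark about why those hypotheses nonetheless appear in the statement is accurate: they hold for the entropy-type function arising in the application to Theorem \ref{lyapunov2inflections}.
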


%\section{Theorem \ref{lyapunov2inflections} follows from Theorem \ref{generaltheorem}}

To see that Theorem \ref{lyapunov2inflections} follows from Theorem \ref{generaltheorem}, note that 
if we introduce $$x= \frac{\log b - \alpha}{\log(b/a)}$$
then the
Lyapunov spectrum $L$ in (\ref{closedform2branch}) can be written as
$$
L(\alpha) = 
-\frac{\phi(x)}{\alpha}
$$
where
\begin{equation}\label{h}
\phi(x)=x\log x+ (1-x)\log(1-x)\,.
\end{equation}
Since $-\alpha= x\log(b/a) - \log b$ then
$$
L(\alpha) = 
\frac{1}{\log(b/a)} \frac{\phi(x)}{x-c}\,,
$$
where 
$$c=\frac{\log b}{\log(b/a)} > 1\,,$$
and if we define 
$$M(x)=\frac{\phi(x)}{x-c}$$
then
$$
L(\alpha) = 
\frac{M(x)}{\log(b/a)}\,,$$
and it is worth recording the following easy lemma:

\begin{lemma}
The functions $L$ and $M$ have the same number of points of inflection.
\end{lemma}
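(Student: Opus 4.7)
The plan is to observe that the relationship between $L$ and $M$ is given by a combination of an affine change of variables and multiplication by a nonzero constant, and that such transformations preserve the number of zeros of the second derivative.

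First I would write down the substitution explicitly: the map $\alpha\mapsto x = (\log b - \alpha)/\log(b/a)$ is an affine bijection from $\mathring{A} = (\log a, \log b)$ onto $(0,1)$, with constant derivative $dx/d\alpha = -1/\log(b/a) \neq 0$. The identity $L(\alpha) = M(x)/\log(b/a)$ is therefore an equality between $L$ and the composition of $M$ with an affine bijection, divided by a positive constant.

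Next I would apply the chain rule. Writing $k := -1/\log(b/a)$, one has $L'(\alpha) = \frac{k}{\log(b/a)} M'(x)$ and
\begin{equation*}
L''(\alpha) = \frac{k^2}{\log(b/a)}\, M''(x).
\end{equation*}
Since $k^2/\log(b/a)$ is a strictly positive constant, $L''(\alpha)$ vanishes if and only if $M''(x)$ vanishes. Because $\alpha\mapsto x$ is a bijection between $\mathring{A}$ and $(0,1)$, the zeros of $L''$ on $\mathring{A}$ are in bijective correspondence with the zeros of $M''$ on $(0,1)$, which proves the claim.

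There is essentially no obstacle here; the lemma is purely a change-of-variables bookkeeping statement. The only point to check is that both endpoints of $\mathring{A}$ correspond exactly to the endpoints of $(0,1)$, so no interior inflection point is lost or gained under the substitution; this is immediate from the affinity of $\alpha \mapsto x$.
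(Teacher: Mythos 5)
Your proof is correct and is essentially the same as the paper's: both arguments apply the chain rule to the affine change of variables relating $L$ and $M$, observe that $L''$ and $M''$ differ by a nonzero constant factor at corresponding points, and conclude via the bijectivity of the affine map. The only cosmetic difference is that you differentiate $L = (M\circ x)/\log(b/a)$ while the paper differentiates $M = \log(b/a)\,(L\circ\alpha)$.
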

\begin{proof}
Now $M(x)=kL(\alpha(x))$, where $k=\log(b/a)$ and $\alpha(x)=-kx + \log b$,
and
$$M''(x)=k^3L''(\alpha(x))\,,$$ 
so $x_0$ satisfies $M''(x_0)=0$ if and only if
$L''(\alpha(x_0))=0$. 
The affine function $\alpha(x)=-x\log(b/a) + \log b$ is in particular a bijection (from $[0,1]$ to $[\log a,\log b]$), so there
is a one-to-one correspondence between 
zeros of $M''$ and zeros of $L''$, as required.
 \end{proof}

The fact that
Theorem \ref{lyapunov2inflections} follows from Theorem \ref{generaltheorem} is then a consequence of the following 
properties of the function $\phi$ defined in (\ref{h}).

\begin{lemma}
The function $\phi:[0,1]\to\R$ defined by
$$\phi(x)=x\log x+ (1-x)\log(1-x)$$
has the following properties:
\item[\, (i)] $\phi(x)=\phi(1-x)$ for all $x\in[0,1]$,
\item[\, (ii)] $\phi\le 0$ on $[0,1]$, and $\phi<0$ on $(0,1)$,
\item[\, (iii)] $\phi'<0$ on $(0,1/2)$ and $\phi'>0$ on $(1/2,1)$,
\item[\, (iv)] $\phi''\ge 4 >0$ on $[0,1]$,
\item[\, (v)]  $\phi'''<0$ on $(0,1/2)$ and $\phi'''>0$ on $(1/2,1)$,
\end{lemma}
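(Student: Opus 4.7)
The plan is to prove all five properties by direct computation of $\phi$ and its first three derivatives, since each assertion reduces to an elementary sign analysis once these formulas are in hand.

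First, for property (i), substituting $1-x$ for $x$ in the definition gives $\phi(1-x) = (1-x)\log(1-x) + x\log x$, which is identical to $\phi(x)$. For property (ii), I adopt the standard convention $0\log 0 = 0$ so that $\phi$ extends continuously to $[0,1]$ with $\phi(0)=\phi(1)=0$; for $x\in(0,1)$ both $\log x$ and $\log(1-x)$ are strictly negative, so each of the two summands $x\log x$ and $(1-x)\log(1-x)$ is strictly negative, whence $\phi<0$ on $(0,1)$ and $\phi\le 0$ on $[0,1]$.

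Next I would compute, for $x\in(0,1)$,
\[
\phi'(x) = \log x - \log(1-x) = \log\!\left(\tfrac{x}{1-x}\right), \qquad
\phi''(x) = \frac{1}{x} + \frac{1}{1-x} = \frac{1}{x(1-x)},
\]
\[
\phi'''(x) = -\frac{1}{x^{2}} + \frac{1}{(1-x)^{2}} = \frac{x^{2}-(1-x)^{2}}{x^{2}(1-x)^{2}} = \frac{2x-1}{x^{2}(1-x)^{2}}.
\]
Property (iii) is immediate from the first formula: $\phi'(x)$ has the same sign as $x-(1-x) = 2x-1$, vanishing precisely at $x=1/2$. Property (iv) follows from the elementary estimate $x(1-x)\le 1/4$ on $[0,1]$ (with equality at $x=1/2$), giving $\phi''(x)\ge 4$; and property (v) is read off directly from the factored form of $\phi'''$, whose sign is controlled by $2x-1$.

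There is really no substantive obstacle here: the entire argument is one line of elementary calculus per item. The only mild care needed is the convention $0\log 0 = 0$ in (ii) and the observation that all three derivative formulas are valid on the open interval $(0,1)$, which is exactly the set on which (iii)--(v) are asserted.
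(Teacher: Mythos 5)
Your proposal is correct and follows essentially the same route as the paper: compute $\phi'$, $\phi''$, $\phi'''$ explicitly and read off the signs, with (i) and (ii) being immediate. The extra care you take over the convention $0\log 0=0$ and the bound $x(1-x)\le 1/4$ only makes the argument more complete than the paper's terse version.
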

\begin{proof}
Properties (i) and (ii) are easily seen to hold.
The derivative formula
$$
\phi'(x)=\log\left(\frac{x}{1-x}\right)
$$
yields property (iii),
the second derivative formula
$$
\phi''(x)=\frac{1}{x(1-x)}
$$
yields property (iv),
and the third derivative formula
$$
\phi'''(x)=\frac{2x-1}{x^2(1-x)^2}
$$
yields property (v).
\end{proof}

%\section{Proof of Theorem \ref{generaltheorem}}

Now we prove Theorem \ref{generaltheorem}, concerning the inflection points of the function
$$M(x)=\frac{\phi(x)}{x-c}\,.$$

Note that
$$
M'(x) = \frac{\phi'(x)}{x-c} - \frac{\phi(x)}{(x-c)^2}\,,
$$
and so
$$
M''(x) = \frac{\phi''(x)}{x-c} - \frac{2\phi'(x)}{(x-c)^2} + \frac{2\phi(x)}{(x-c)^3}\,.
$$

So if $x_0$ is an inflection point of $M$ then $M''(x_0)=0$, and therefore $(x_0-c)^3M(x_0)=0$,
in other words
$$
\phi''(x_0)(x_0-c)^2-2\phi'(x_0)(x_0-c)+2\phi(x_0)=0\,,
$$
hence necessarily either
\begin{equation}\label{positive}
x_0 - c = \frac{\phi'(x_0) + \sqrt{\phi'(x_0)^2 - 2\phi(x_0)\phi''(x_0)}}{\phi''(x_0)}
\end{equation}
or
\begin{equation}\label{negative}
x_0 - c = \frac{\phi'(x_0) - \sqrt{\phi'(x_0)^2 - 2\phi(x_0)\phi''(x_0)}}{\phi''(x_0)} \,.
\end{equation}

If we now define functions $\Phi^+$ and $\Phi^-$ by
\begin{equation}\label{H+defn}
\Phi^+(x) = -x + \frac{\phi'(x) + \sqrt{\phi'(x)^2 - 2\phi(x)\phi''(x)}}{\phi''(x)}
\end{equation}
and
\begin{equation}\label{H-defn}
\Phi^-(x) = -x + \frac{\phi'(x) - \sqrt{\phi'(x)^2 - 2\phi(x)\phi''(x)}}{\phi''(x)}\,,
\end{equation}
then we see that (\ref{positive}) and (\ref{negative}) are respectively equivalent to
\begin{equation}\label{positiveH}
\Phi^+(x_0) = -c
\end{equation}
and
\begin{equation}\label{negativeH}
\Phi^-(x_0) = -c \,.
\end{equation}

That is, if $x_0$ is an inflection point of $M$ then necessarily either
(\ref{positiveH}) or (\ref{negativeH}) holds.

We now show that in fact (\ref{positiveH}) can never hold:

\begin{lemma}\label{negativeonly}
If $x_0$ is an inflection point of $M$ then necessarily (\ref{negativeH}) holds.
\end{lemma}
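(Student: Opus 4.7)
The plan is to argue by contradiction, by showing that the right-hand side of (\ref{positive}) is strictly positive while the left-hand side $x_0-c$ is strictly negative. Thus any supposed inflection point $x_0$ for which $\Phi^+(x_0)=-c$ forces an inconsistency, and only (\ref{negativeH}) can hold.

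First I would use the hypotheses on $\phi$ to control the sign of the quantities appearing in the numerator of (\ref{positive}). Since $x_0$ is an inflection point of $M$ in the interior of $[0,1]$, one has $x_0\in(0,1)$. The hypothesis that $\phi<0$ on $(0,1)$ (together with the symmetry and the fact that $\phi$ is not identically zero, which forces $\phi<0$ wherever $\phi\ne0$) gives $\phi(x_0)<0$. Convexity gives $\phi''(x_0)>0$. Therefore
$$-2\phi(x_0)\phi''(x_0)>0,$$
so the quantity under the square root in (\ref{positive}) satisfies $\phi'(x_0)^2-2\phi(x_0)\phi''(x_0)>\phi'(x_0)^2\ge 0$. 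Taking square roots,
$$\sqrt{\phi'(x_0)^2-2\phi(x_0)\phi''(x_0)}>|\phi'(x_0)|\ge -\phi'(x_0),$$
which rearranges to
$$\phi'(x_0)+\sqrt{\phi'(x_0)^2-2\phi(x_0)\phi''(x_0)}>0.$$

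Second, since $\phi''(x_0)>0$ as well, dividing by $\phi''(x_0)$ preserves the inequality and shows that the right-hand side of (\ref{positive}) is strictly positive. On the other hand, because $c>1$ and $x_0\in(0,1)$, the left-hand side $x_0-c$ is strictly negative. This contradiction rules out (\ref{positive}), equivalently (\ref{positiveH}). Since any inflection point of $M$ must satisfy either (\ref{positiveH}) or (\ref{negativeH}), we conclude that (\ref{negativeH}) is the only possibility, as required.

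There is really no substantial obstacle: the whole argument is a one-line sign analysis driven purely by the fact that $\phi$ is negative and convex (so the numerator is positive, the denominator is positive, and the left-hand side is negative). It is worth noting that the symmetry of $\phi$ about $1/2$ and the sign of $\phi'''$ play no role in this particular lemma — those hypotheses will be needed later to analyse the zero set of $\Phi^-+c$ and so bound the total number of inflection points of $M$ by two.
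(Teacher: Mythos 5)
Your proof is correct and follows essentially the same route as the paper: both arguments reduce to the observation that $\phi\le 0$ and $\phi''>0$ force the quotient $\bigl(\phi'+\sqrt{(\phi')^2-2\phi\phi''}\bigr)/\phi''$ to be non-negative, which is incompatible with $x_0-c<1-c<0$ (the paper phrases this as $\Phi^+\ge -x\ge -1>-c$, you phrase it as a sign contradiction in (\ref{positive}), but the computation is identical). Your closing remark that the symmetry and the sign of $\phi'''$ are not used here, and only enter in the analysis of $\Phi^-$, also matches the structure of the paper's argument.
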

\begin{proof}
In view of the above discussion it suffices to show that (\ref{positiveH}) can never hold.
For this, note that an 
assumption of Theorem \ref{generaltheorem} is that $c>1$, so that $-c<-1$. We claim that the image of $\Phi^+$ is disjoint from $(-\infty, -1)$, so that no $x_0$ can satisfy $\Phi^+(x_0)=-c$.

Now $\phi\le0$ on $[0,1]$, and $\phi''>0$ on $[0,1]$, so $\phi(x)\phi''(x)\le0$ for all $x\in[0,1]$.
Therefore $\phi'(x)^2-2\phi(x)\phi''(x) \ge \phi'(x)^2$, and consequently
$$
\sqrt{\phi'(x)^2-2\phi(x)\phi''(x) } \ge -\phi'(x)\ \text{for all $x\in[0,1]$,}
$$
in other words
$$
\phi'(x)+\sqrt{\phi'(x)^2-2\phi(x)\phi''(x) } \ge 0 \ \text{for all $x\in[0,1]$,}
$$
and hence
$$
\frac{\phi'(x) + \sqrt{\phi'(x)^2 - 2\phi(x)\phi''(x)}}{\phi''(x)} \ge 0 \ \text{for all $x\in[0,1]$,}
$$
since $\phi''\ge0$ on $[0,1]$.
It follows from the formula (\ref{H+defn}) that $\Phi^+(x) \ge -x$, and hence that $\Phi^+\ge -1$ on $[0,1]$, as required.
\end{proof}

Having eliminated the need to consider the function $\Phi^+$, we now simplify our notation by defining $\Phi:=\Phi^-$,
in other words we set
\begin{equation}\label{Hdefn}
\Phi(x) = \Phi^-(x) = -x + \frac{\phi'(x) - \sqrt{\phi'(x)^2 - 2\phi(x)\phi''(x)}}{\phi''(x)}\,,
\end{equation}
and from Lemma \ref{negativeonly} we know that if
$x_0$ is an inflection point of $M$ then necessarily
\begin{equation}\label{H}
\Phi(x_0) = -c \,.
\end{equation}

To conclude the proof of Theorem \ref{generaltheorem}, it now suffices to show
(in Lemma \ref{increasingdecreasinglemma} below) that $\Phi$ is strictly
decreasing on $(0,1/2)$, and strictly increasing on $(1/2,1)$, since
from the above discussion it follows that 
any point $-c\in (- \infty, -1)$ has at most two $\Phi$-preimages, and hence that $M$ has at most two points of inflection.

%We therefore wish to establish:

\begin{lemma}\label{increasingdecreasinglemma}
Under the hypotheses on $\phi$ of Theorem \ref{generaltheorem},
the function $\Phi$ defined by (\ref{Hdefn}) is strictly decreasing on $(0,1/2)$, and strictly increasing on $(1/2,1)$.
\end{lemma}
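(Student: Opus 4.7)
The plan is to compute $\Phi'$ in closed form and verify that, after algebraic simplification, it factors as $\phi'''$ multiplied by a quantity that is strictly positive on $(0,1)$; the monotonicity assertion then follows immediately from the sign hypothesis on $\phi'''$.

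To carry this out, I would introduce the shorthand $D(x):=\sqrt{\phi'(x)^2-2\phi(x)\phi''(x)}$, so $D\ge 0$ and $D^2=\phi'^2-2\phi\phi''$. Since $\phi<0$ and $\phi''>0$ on $(0,1)$, one has $D^2>\phi'^2$ strictly on $(0,1)$, whence $D>|\phi'|$; in particular $\phi'-D<0$ and $\phi'+D>0$ throughout $(0,1)$. Differentiating the identity $D^2=\phi'^2-2\phi\phi''$ gives $D'=-\phi\phi'''/D$, and substituting into $(\ref{Hdefn})$ yields, after collecting terms,
$$
\Phi'(x)=\frac{\phi'''(x)}{\phi''(x)^2\,D(x)}\Bigl[\phi(x)\phi''(x)+D(x)^2-\phi'(x)D(x)\Bigr].
$$

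The key algebraic step is to simplify the bracketed expression using the conjugate identity $\phi'-D=2\phi\phi''/(\phi'+D)$, which is a restatement of $D^2-\phi'^2=-2\phi\phi''$. A short manipulation reduces the bracket to $\phi\phi''(\phi'-D)/(\phi'+D)$, giving the clean formula
$$
\Phi'(x)=\frac{\phi(x)\,\phi'''(x)\,(\phi'(x)-D(x))}{\phi''(x)\,D(x)\,(\phi'(x)+D(x))}.
$$
The proof then concludes with a sign count: the numerator $\phi(\phi'-D)$ is a product of two negative quantities, hence positive, while the denominator $\phi''D(\phi'+D)$ is a product of three positive quantities. Thus the coefficient of $\phi'''$ is strictly positive on $(0,1)$, and $\Phi'(x)$ has the same sign as $\phi'''(x)$. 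Combining this with the assumption $\phi'''<0$ on $(0,1/2)$ and its symmetric counterpart $\phi'''>0$ on $(1/2,1)$ yields the desired monotonicity of $\Phi$.

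The main obstacle is spotting the right simplification of the bracket: without the conjugate identity, the sign of $\phi\phi''+D^2-\phi'D$ is not manifest, and the factor $\phi'''$ cannot be cleanly separated from a strictly positive remainder. Once that reduction is in place, everything else is bookkeeping of signs under the given hypotheses on $\phi$.
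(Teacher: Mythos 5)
Your proposal is correct and follows essentially the same route as the paper: both compute $\Phi'$, factor out $\phi'''/(\phi'')^2$, and reduce the lemma to showing the remaining factor $\frac{\phi\phi''}{D}-\phi'+D$ is strictly positive. The only difference is in that last positivity check, where the paper rearranges and squares to reach $(\phi\phi'')^2>0$ while you multiply by the conjugate $\phi'+D$ to exhibit the factor as $\phi\phi''(\phi'-D)/(\phi'+D)$, a manifestly positive quantity --- a slightly cleaner finish, but not a different argument.
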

\begin{proof}
It will be shown that
$\Phi'$ is strictly negative on $(0,1/2)$, and strictly positive on $(1/2,1)$.

The formula
$$
\Phi' = -1 +\frac{1}{\phi''}\left( \phi''+ \frac{\phi \phi'''}{\sqrt{(\phi')^2-2\phi \phi''}}\right) - \frac{\phi'''}{(\phi'')^2}\left(\phi' - \sqrt{(\phi')^2-2\phi\phi''}\right) 
$$
simplifies to
$$
\Phi' = \frac{\phi'''}{(\phi'')^2}\left( \frac{\phi\phi''}{\sqrt{(\phi')^2-2\phi\phi''}} - \phi' + \sqrt{(\phi')^2-2\phi\phi''}\right)
$$
%$$
%H'(x) = \left(\frac{1}{h''(x)}\right)^2\left( \frac{h(x)h''(x)h'''(x)}{\sqrt{h'(x)^2-2h(x)h''(x)}} - h'(x)h'''(x) + h'''(x)\sqrt{h'(x)^2-2h(x)h''(x)}\right)
%$$
and since by assumption $\phi'''$ is negative on $(0,1/2)$ and positive on $(1/2,1)$, it will suffice to prove that
\begin{equation}\label{first}
 \frac{\phi\phi''}{\sqrt{(\phi')^2-2\phi\phi''}} - \phi' + \sqrt{(\phi')^2-2\phi\phi''} > 0\,.
\end{equation}
Rewriting (\ref{first}) as
\begin{equation*}\label{second}
\frac{\phi\phi''}{\sqrt{(\phi')^2-2\phi\phi''}} > \phi' - \sqrt{(\phi')^2-2\phi\phi''} \,,
\end{equation*}
and then as
\begin{equation*}\label{third}
\phi\phi'' > \phi'\sqrt{(\phi')^2-2\phi\phi''} - \left((\phi')^2-2\phi\phi''\right) \,,
\end{equation*}
it can then be rearranged as
\begin{equation}\label{fourth}
(\phi')^2 - \phi\phi'' > \phi'\sqrt{(\phi')^2-2\phi\phi''}  \,.
\end{equation}
The lefthand side of (\ref{fourth}) is positive on $[0,1]$ (since $\phi<0$ and $\phi''>0$), so (\ref{fourth}) is certainly true if its righthand side is negative (i.e.~on the sub-interval $[0,1/2]$).
If the righthand side of (\ref{fourth}) is positive then
squaring both sides gives
\begin{equation*}\label{fifth}
(\phi')^4 + (\phi\phi'')^2  - 2(\phi')^2\phi\phi''> (\phi')^2 \left((\phi')^2-2\phi\phi''\right)   \,,
\end{equation*}
or in other words
\begin{equation*}\label{sixth}
(\phi')^4 + (\phi\phi'')^2  - 2(\phi')^2\phi\phi''> (\phi')^4 -2(\phi')^2\phi\phi''   \,,
\end{equation*}
which simplifies to become
\begin{equation*}\label{seventh}
(\phi\phi'')^2 > 0   \,,
\end{equation*}
which is clearly true, so the result is proved.
\end{proof}

%\begin{remark}
%For the particular case $\phi(x)=x\log x + (1-x)\log(1-x)$ used in 
%deducing Theorem \ref{lyapunov2inflections}
%from Theorem
% \ref{generaltheorem},
% the corresponding 
%function $\Phi=\Phi^-$ (as defined in (\ref{H-defn}))
%  is graphed in Figure 1, where the features described in Lemma \ref{increasingdecreasinglemma} are readily apparent.

%\begin{figure}[!h]
%\begin{center}
 %\includegraphics[]{Hgraph.pdf}
%\caption{Function $\Phi$ when $\phi(x)=x\log x + (1-x)\log(1-x)$}
%\end{center}
%\end{figure}

%\end{remark}

\section{The number of Lyapunov inflections is unbounded}\label{manyinflectionssection}

In this section we define a particular sequence of piecewise linear maps $T_N$, with the property that the number
of Lyapunov inflections of $T_N$ tends to infinity as $N\to\infty$.

\begin{defn}\label{TNdefn}
For integers $N\ge 6$, define
$$
q_N:= \sum_{j=6}^N 2^{j^2}\,
$$
Let $\X_N := \{X_i\}_{i=1}^{q_N}$ be a collection of pairwise disjoint closed sub-intervals of $[0,1]$, where
for each $6\le j\le N$, exactly
$2^{j^2}$ of the intervals have length equal to 
%$l_j$, where 
%$$l_j = 2^{-2^j}\,.$$
$2^{-2^j}$.
Let $T_N$ be a corresponding piecewise linear map.
\end{defn}

\begin{theorem}\label{finiteinflectionstheorem}
For $N\ge 27$, the
Lyapunov spectrum for the
 %finite branch 
 piecewise linear map $T_N$ has at least
$2(N-26)$ points of inflection.
\end{theorem}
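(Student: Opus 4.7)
The plan is to use Proposition \ref{keyequationprop}: for the map $T_N$, inflection points of $L$ correspond bijectively to solutions $s$ of $G(s)=0$, where
\[
G(s) := \frac{F''(s)F(s)}{F'(s)^2} - 1 - \frac{1}{2\log F(s)},
\]
and $F(s)=\sum_{j=6}^N a_j(s)$ with $a_j(s):=2^{j^2-2^j s}$. Introducing the probability weights $w_j(s):=a_j(s)/F(s)$, an elementary computation gives the probabilistic identity
\[
\frac{F''(s)F(s)}{F'(s)^2} = \frac{\sum_j 4^j w_j(s)}{\left(\sum_j 2^j w_j(s)\right)^2} = 1 + \mathrm{CV}^2(s),
\]
where $\mathrm{CV}^2(s)$ is the squared coefficient of variation of the random variable $X=2^j$ distributed according to $w_j(s)$. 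So inflections of $L$ are exactly the solutions of $\mathrm{CV}^2(s)=1/(2\log F(s))$.

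I would next introduce two interlaced families of test points. For $6\le j\le N-1$ define the \emph{transition point} $s_j^T:=(2j+1)/2^j$, the unique root of $a_j(s)=a_{j+1}(s)$; for $6\le j\le N$ define the \emph{dominance point} $s_j^D:=8j/(3\cdot 2^j)$, which for interior $j$ is the unique root of $a_{j-1}(s)=a_{j+1}(s)$. Direct arithmetic yields $s_j^T<s_j^D<s_{j-1}^T$ for every $j\ge 2$, so the points alternate along the real line:
\[
s_N^D < s_{N-1}^T < s_{N-1}^D < s_{N-2}^T < \cdots < s_{26}^T < s_{26}^D.
\]
The goal is to show that $G(s_j^D)<0$ and $G(s_j^T)>0$ for every $j\ge 26$ in the applicable range; the intermediate value theorem then produces a zero of $G$ in each of the $2(N-26)$ consecutive sub-intervals of the list above, yielding the required inflections.

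The sign analysis at $s_j^T$ proceeds from the ratios $a_{j-1}(s_j^T)/a_j(s_j^T)=2^{-j+3/2}$ and $a_{j+2}(s_j^T)/a_{j+1}(s_j^T)=2^{1-2j}$ (with still faster decay at more distant indices), showing that $F$ is dominated by the equal pair $a_j=a_{j+1}$. This gives $\mathrm{CV}^2(s_j^T)=\tfrac{1}{9}+O(2^{-j/2})$ and $\log F(s_j^T)=(j^2-2j)\log 2+o(1)$, so $\mathrm{CV}^2>1/(2\log F)$ holds with room to spare for all $j\ge 4$, hence for $j\ge 26$. At $s_j^D$ the symmetric bounds $a_{j\pm 1}(s_j^D)/a_j(s_j^D)=2^{1-2j/3}$ (with faster decay farther away) control all non-dominant weights uniformly, yielding $\mathrm{CV}^2(s_j^D)\le C\cdot 2^{-2j/3}$ for an explicit constant $C$, while $\log F(s_j^D)\ge (j^2-8j/3)\log 2$. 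The required inequality $\mathrm{CV}^2(s_j^D)<1/(2\log F(s_j^D))$ therefore reduces to $2^{2j/3}>2C(j^2-8j/3)\log 2$, which a routine check verifies for all $j\ge 26$.

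The main obstacle is making this dominance estimate fully rigorous: although the leading-order comparison is comfortable (exponential decay $2^{-2j/3}$ versus polynomial $j^{-2}$), one must bound the \emph{entire} tail contribution to $\mathrm{CV}^2(s_j^D)$ from all indices $k\ne j$, not merely the nearest neighbors $k=j\pm 1$, and verify that the accumulated error remains below the threshold $1/(2\log F(s_j^D))$ uniformly for $j\ge 26$. Once this book-keeping is carried through, the alternating signs and the intermediate value theorem complete the proof.
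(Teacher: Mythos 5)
Your proposal is correct and follows essentially the same route as the paper: the same key equation from Proposition~\ref{keyequationprop}, the same transition points $s_j=(2j+1)/2^j$, interlaced with points of single-term dominance (the paper uses the midpoints $m_j=(6j-1)/2^{j+1}$ where you use $8j/(3\cdot 2^j)$, an immaterial difference), and the intermediate value theorem applied to the resulting $2(N-26)$ sign changes; your coefficient-of-variation identity is a pleasant repackaging of the quantity $F''F/(F')^2-1$ but does not change the argument. The tail book-keeping you defer at the end is exactly what occupies the bulk of the paper's proof (explicit constants $a<17/10$, $b<3/2$, $c=16/5$ bounding the head and tail sums by $O(2^{-k/2})$), and it does go through, so there is no obstruction to completing your outline.
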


\begin{remark}
\item[\, (a)] The Lebesgue measure of $\cup_{i=1}^{q_N} X_i$ is $\sum_{j=6}^N 2^{j^2-2^j}$, which is strictly smaller than 1 for all $N\ge 6$ (note that $\sum_{j=6}^N 2^{j^2-2^j} < 10^{-8}$), so it is certainly possible to choose the $X_i$ to be pairwise disjoint and contained in $[0,1]$.

\item[\, (b)] We prescribe the \emph{lengths} of the intervals in the collection $\X_N$, but need 
no further information about the intervals themselves (beyond the fact that they are pairwise disjoint, and contained in $[0,1]$), since translating various of the $X_i$ does not change the Lyapunov spectrum.
 Clearly it could be arranged that 
%$(X_N)_{N\ge 6}$ is increasing, in the sense that
$\X_N\subset \X_{N+1}$ for all $N\ge 6$, which would lend the interpretation of $T_{N+1}$ \emph{evolving from} the 
preceding map $T_N$ (as described in \S \ref{introsection}) by adjoining $2^{(N+1)^2}$ new branches.

\item[\, (c)] The number of branches $q_N$ of $T_N$ is large. For example $T_6$ has 
$2^{36}=68,719,476,736$ branches, with $|T_6'|=2^{64}=18,446,744,073,709,551,616$ on each branch.
In Theorem \ref{finiteinflectionstheorem}, the smallest value of $N$ yielding more than two Lyapunov inflections is $N=28$,
and the map $T_{28}$ has $q_{28}>10^{236}$ branches.

\end{remark}

\begin{notation}
Following\footnote{The fact that various of the intervals in $\X_N$ have identical lengths
allows the representation (\ref{Fdef}) as a sum over the range $6\le j\le N$,
rather than over $6\le j\le q_N$.}
 the notation of (\ref{generalFdefn}), define $F_N:\R\to\R$ by 
\begin{equation}\label{Fdef}
F_N(s)=\sum_{j=6}^N 2^{j^2}2^{- 2^js} =  \sum_{j=6}^N 2^{j^2- 2^js} \,.
\end{equation}
Define
$$
U_j(s):=j^2-2^j s\,,
$$
so that
\begin{equation}\label{Fsum}
F_N = \sum_{j=6}^N 2^{U_j}\,.
\end{equation}

For $j\ge 1$ define
$$
s_j:= \frac{2j+1}{2^j} \,,
$$
and for $j\ge 2$ define the midpoint
$$m_j := \frac{s_j + s_{j-1}}{2}
= \frac{6j-1}{2^{j+1}}
\,,$$
so in particular
$$
s_1>m_2>s_2>m_3>s_3>\ldots
$$
and 
$$\lim_{j\to\infty} s_j = \lim_{j\to\infty}m_j = 0\,.$$
\end{notation}

\begin{remark}
The $s_j$ are defined so that $U_j(s_j)$ and $U_{j+1}(s_j)$ are equal, more precisely
$$U_j(s_j) =U_{j+1}(s_j)= j^2-2j-1\,,$$
and each $m_j$ is the mid-point of $s_j$ and $s_{j-1}$, with
$$
U_j(m_j) = j^2-3j+1/2\,.
$$
\end{remark}

In light of Proposition
\ref{keyequationprop},
to prove Theorem \ref{finiteinflectionstheorem}
 it suffices to establish the following result:

\begin{prop}\label{finitesolutionsprop}
For $N\ge 27$, the equation
\begin{equation}\label{keyequationN}
\frac{1}{2\log F_N(s)} =  \frac{F_N''(s) F_N(s)}{F_N'(s)^2} -1
\end{equation}
has at least $2(N-26)$ distinct solutions.
\end{prop}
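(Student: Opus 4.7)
The plan is to count zeros of the function
\[
G_N(s) := \frac{F_N''(s) F_N(s)}{F_N'(s)^2} - 1 - \frac{1}{2\log F_N(s)},
\]
whose zeros in $(0,\infty)$ are exactly the solutions to (\ref{keyequationN}), via the intermediate value theorem applied to the interlaced sequence $\ldots < s_{j+1} < m_{j+1} < s_j < m_j < \ldots$. Concretely, I would prove $G_N(s_j) > 0$ for $26 \le j \le N-1$ and $G_N(m_j) < 0$ for $26 \le j \le N$. Given these, the intervals $(m_{j+1}, s_j)$ and $(s_j, m_j)$ (for $26 \le j \le N-1$) are pairwise disjoint and each contains a sign change of $G_N$, producing the required $2(N-26)$ distinct solutions.

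The first main step is asymptotic analysis of $F_N$, $F_N'$, $F_N''$ at $s_j$. The defining identity $U_j(s_j) = U_{j+1}(s_j) = j^2 - 2j - 1$ makes the $j$-th and $(j+1)$-th terms tied and dominant: a direct computation gives $U_{j-1}(s_j) - U_j(s_j) = -(j - \tfrac{3}{2})$ and $U_{j+2}(s_j) - U_j(s_j) = -(2j - 1)$, with geometrically decreasing bounds at more distant indices. Up to exponentially small errors in $j$ one therefore obtains
\[
\frac{F_N''(s_j) F_N(s_j)}{F_N'(s_j)^2} \;\approx\; \frac{(2^{2j} + 2^{2j+2}) \cdot 2}{(2^j + 2^{j+1})^2} \;=\; \frac{10}{9},
\]
so the first two terms of $G_N(s_j)$ tend to $\tfrac{1}{9}$, while $\tfrac{1}{2\log F_N(s_j)} \sim \tfrac{1}{2(j^2 - 2j)\log 2}$ is of order $j^{-2}$; thus $G_N(s_j) > 0$ once $j$ is large enough to overcome the error terms. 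The restriction $j \le N-1$ is precisely what ensures that the partner $U_{j+1}$ is present in $F_N$.

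The analogous estimate at $m_j$ exploits the uniqueness of the maximum: $U_j(m_j) - U_{j-1}(m_j) = \tfrac{j}{2} - \tfrac{3}{4}$ and $U_j(m_j) - U_{j+1}(m_j) = j - \tfrac{3}{2}$, with other terms strictly smaller. Setting $\epsilon_k := 2^{U_k(m_j) - U_j(m_j)}$, a first-order expansion yields
\[
\frac{F_N''(m_j) F_N(m_j)}{F_N'(m_j)^2} - 1 \;=\; \tfrac{1}{4}\epsilon_{j-1} + \epsilon_{j+1} + O(\epsilon^2) \;=\; O(2^{-j/2}),
\]
which is exponentially small in $j$, whereas $\tfrac{1}{2\log F_N(m_j)} \sim \tfrac{1}{2(j^2 - 3j + \tfrac{1}{2})\log 2}$ decays only polynomially; hence $G_N(m_j) < 0$ for $j$ large enough.

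The main obstacle will be making these heuristics quantitatively sharp enough to certify the specific threshold $j \ge 26$ \emph{uniformly in $N$}. This will require explicit geometric tail bounds on $\sum_{k \notin \{j,j+1\}} 2^{U_k(s_j)}$ together with its $2^k$- and $2^{2k}$-weighted analogues (and the corresponding sums at $m_j$), propagated through the quotient to yield effective remainders of the form $\mathrm{RHS}(s_j) \ge \tfrac{1}{9} - C \cdot 2^{-j}$ and $\mathrm{RHS}(m_j) \le C' \cdot 2^{-j/2}$. The base case $j = 26$ is then a finite numerical verification, with monotonicity of the leading terms handling all larger $j$. A minor boundary remark: at $j = N$ the partner $U_{N+1}$ is absent from $F_N$, but only the single-term-dominance inequality $G_N(m_N) < 0$ is invoked there, so no additional argument is needed.
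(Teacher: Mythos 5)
Your proposal follows essentially the same route as the paper's proof: the same interlaced test points $s_j$ and $m_j$, the same sign comparison of $\frac{1}{2\log F_N}$ against $\frac{F_N F_N''}{(F_N')^2}-1$ (two tied dominant terms at $s_j$ giving the limit $\frac{10}{9}$, a single dominant term at $m_j$ giving an $O(2^{-j/2})$ right-hand side), and the same intermediate value argument on the intervals $(m_{j+1},s_j)$ and $(s_j,m_j)$; your gap computations $U_{j-1}(s_j)-U_j(s_j)=-(j-\tfrac32)$, $U_j(m_j)-U_{j-1}(m_j)=\tfrac j2-\tfrac34$, etc.\ all check out. The quantitative tail bounds you defer (explicit geometric estimates on the head and tail sums, propagated through the quotient, plus the numerical verification at $j=26$) are exactly the content of the paper's proof, so the plan is sound and complete in outline.
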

\begin{proof}
Introducing the auxiliary functions
$$
G_N(s):= \frac{1}{2\log F_N(s)}
\ ,\quad$$
$$
H_N(s):= \frac{F_N(s)F_N''(s)}{F_N'(s)^2} -1\,,
$$
we claim that
\begin{equation}\label{GdominatesH}
G_N(m_k) > H_N(m_k)\quad\text{for all } \, 26\le k\le N\,,
\end{equation}
and
\begin{equation}\label{HdominatesG}
G_N(s_k) < H_N(s_k) \quad\text{for all } \, 26\le k\le N-1 \,.
\end{equation}

Note that the proposition 
will follow from 
(\ref{GdominatesH}) and (\ref{HdominatesG}), since 
the intermediate value theorem then guarantees that 
for each $26\le k\le N-1$
there is a solution to
(\ref{keyequationN}) in both of the intervals $(m_{k+1},s_k)$ and $(s_k,m_k)$.

To prove (\ref{GdominatesH}) and (\ref{HdominatesG}), first note
that the derivative of $F_N$ can be written as
\begin{equation}\label{Fprimesum}
F_N' = - (\log 2)\sum_{j=6}^N 2^{j+U_j}\,,
\end{equation}
and its second derivative as 
\begin{equation}\label{Fdoubleprimesum}
F_N'' = (\log 2)^2 \sum_{j=6}^N 2^{2j+U_j}
= (\log 2)^2 \sum_{j=6}^N 2^{V_j}\,,
\end{equation}
where we set 
$$V_j(s):=2j + U_j(s) = 2j+j^2-2^j s\,.$$

First we prove (\ref{GdominatesH}).
The strategy for this will be to approximate each of $F_N(m_k)$, $F_N'(m_k)$, $F_N''(m_k)$ by the single $j=k$ term in the respective sums (\ref{Fsum}), (\ref{Fprimesum}), (\ref{Fdoubleprimesum}).
In particular, we claim that if $c=16/5$ then, for all $26\le k\le N$,
\begin{equation}\label{Finequality}
F_N(m_k) < 2^{U_k(m_k)} (1+ c 2^{-k/2})
\end{equation}
and
\begin{equation}\label{Fdoubleprimeinequality}
F_N''(m_k) < (\log 2)^2\, 2^{2k+U_k(m_k)} (1+ c 2^{-k/2})\,,
\end{equation}
and combine these with the obvious lower bound
\begin{equation}\label{Fprimeinequality}
F_N'(m_k)^2 >  (\log 2)^2\, 2^{2k+2U_k(m_k)}
\end{equation}
to give
\begin{equation}\label{Hestimate}
H_N(m_k)  = \frac{F_N(m_k)F_N''(m_k)}{F_N'(m_k)^2} - 1
<
(1+ c 2^{-k/2})^2 -1
=2c\, 2^{-k/2}+c^2 2^{-k}\,, 
\end{equation}
%which is
%$O(2^{-k/2})$
%as $k\to\infty$,
whereas (\ref{Finequality}) yields
$$
\log F_N(m_k) < (\log 2) U_k(m_k) + \log(1+c2^{-k/2})
< (\log 2)(k^2-3k+1/2)+c2^{-k/2}
$$ 
and therefore
\begin{equation}\label{Gestimate}
G_N(m_k)=\frac{1}{2 \log F_N(m_k)} > \frac{1}{ (2\log 2)(k^2-3k+1/2)+c2^{1-k/2}}\,.
\end{equation}
Since $c=16/5$,
%comparing the righthand sides of 
from (\ref{Hestimate}) and (\ref{Gestimate})
it is readily verified that
%we see that indeed 
$G_N(m_k) > H_N(m_k)$ for\footnote{The appearance of the number 26 in 
Theorem \ref{finiteinflectionstheorem} and Proposition \ref{finitesolutionsprop} is due to 
this verification step.}
 $26\le k\le N$, as required.

To establish (\ref{Finequality}) and (\ref{Fdoubleprimeinequality}) let us first write
\begin{equation*}
F_N(m_k) = 2^{U_k(m_k)} \left( \sum_{j=6}^{k-1} 2^{U_j(m_k)-U_k(m_k)} + 1 + \sum_{j=k+1}^N 2^{U_j(m_k)-U_k(m_k)} \right)\,,
\end{equation*}
$$
F_N''(m_k) = (\log 2)^2\, 2^{V_k(m_k)} \left( \sum_{j=6}^{k-1} 2^{V_j(m_k)-V_k(m_k)} + 1 + \sum_{j=k+1}^N 2^{V_j(m_k)-V_k(m_k)} \right)\,.
$$
Now $V_j(m_k)-V_k(m_k) = 2(j-k) + U_j(m_k)-U_k(m_k)$, so
$V_j(m_k)-V_k(m_k)$ is smaller (respectively larger) than $U_j(m_k)-U_k(m_k)$ when
$j$ is smaller (respectively larger) than $k$, so
\begin{equation}\label{Ffirst}
F_N(m_k) < 2^{U_k(m_k)} \left( \sum_{j=6}^{k-1} 2^{U_j(m_k)-U_k(m_k)} + 1 + \sum_{j=k+1}^N 2^{V_j(m_k)-V_k(m_k)} \right)\,,
\end{equation}
and
\begin{equation}\label{Fdoubleprimefirst}
F_N''(m_k) = (\log 2)^2\, 2^{V_k(m_k)} \left( \sum_{j=6}^{k-1} 2^{U_j(m_k)-U_k(m_k)} + 1 + \sum_{j=k+1}^N 2^{V_j(m_k)-V_k(m_k)} \right)\,.
\end{equation}
It therefore remains to derive upper bounds on
\begin{equation}\label{headsum}
 \sum_{j=6}^{k-1} 2^{U_j(m_k)-U_k(m_k)}
 \end{equation}
 and
\begin{equation}\label{tailsum}
\sum_{j=k+1}^\infty 2^{V_j(m_k)-V_k(m_k)}\,.
\end{equation}

To bound (\ref{headsum}), writing $j=k-i$ for $1\le i\le k-6$ we calculate
$$
U_{k-i}(m_k)-U_k(m_k)
= -k/2+ (-2i+7/2-3\cdot 2^{-i})k +i^2+2^{-1-i}-1/2 \,,
$$
and since $-2i+7/2-3\cdot 2^{-i}<0$ for $i\ge1$, and $k\ge i+6$, then
$$
U_{k-i}(m_k)-U_k(m_k)
<
-k/2+A(i)\,,
$$
where
$$
A(i):= (-2i+7/2-3\cdot 2^{-i})(i+6) +i^2+2^{-1-i}-1/2 \,,
$$
so
\begin{equation}\label{abound}
 \sum_{j=6}^{k-1} 2^{U_j(m_k)-U_k(m_k)}
 =
  \sum_{i=1}^{k-6} 2^{U_{k-i}(m_k)-U_k(m_k)}
  <
2^{-k/2}  \sum_{i=1}^{k-6} 2^{A(i)}
  <
2^{-k/2}  \sum_{i=1}^{\infty} 2^{A(i)} = a 2^{-k/2}
\end{equation}
where
$$
a = \sum_{i=1}^{\infty} 2^{A(i)} = 2^{3/4}+2^{-51/8}+2^{-277/16}+\ldots < 17/10\,.
$$

To bound (\ref{tailsum}), writing $j=k+i$ we have
$$
V_{k+i}(m_k)-V_k(m_k) 
=
-k/2 +(2i +7/2 -3\cdot 2^i)k + i^2+2i+2^{i-1}-1/2
$$
and since $k\ge 6$, and $2i +7/2 -3\cdot 2^i<0$ for $i\ge1$, then
$$
V_{k+i}(m_k)-V_k(m_k) 
< 
-k/2 + B(i) \,,
$$
where 
$$
B(i):= 6(2i +7/2 -3\cdot 2^i) + i^2+2i+2^{i-1}-1/2\,,
$$
so
\begin{equation}\label{bbound}
\sum_{j=k+1}^N 2^{V_j(m_k)-V_k(m_k)}
=
\sum_{i=1}^{N-k} 2^{V_{k+i}(m_k)-V_k(m_k) }
< 2^{-k/2} \sum_{i=1}^{N-k} 2^{B(i)} = b 2^{-k/2}
\end{equation}
where
$$
b=\sum_{i=1}^{N-k} 2^{ B(i)} \le 2^{1/2}+2^{-35/2}+2^{-137/2}+\ldots
< 3/2\,.
%= 1.4142189571697\ldots
$$

Combining (\ref{abound}) and (\ref{bbound}) gives
\begin{equation}\label{cbound}
\sum_{j=6}^{k-1} 2^{U_j(m_k)-U_k(m_k)}
+
\sum_{j=k+1}^N 2^{V_j(m_k)-V_k(m_k)}
< c 2^{-k/2}
\end{equation}
where
\begin{equation*}
c= 
%a+b < 
17/10 +3/2 = 16/5\,,
\end{equation*}
and substituting (\ref{cbound}) into
(\ref{Ffirst}) and (\ref{Fdoubleprimefirst}) yields the claimed
inequalities (\ref{Finequality}) and (\ref{Fdoubleprimeinequality}).
This completes the proof of (\ref{GdominatesH}).

To prove (\ref{HdominatesG}), assume that  $26\le k\le N-1$, and note that
\begin{equation}\label{lowerF}
F_N(s_k)>2^{U_k(s_k)} = 2^{k^2-2k-1}\,,
\end{equation}
 so
$$
G_N(s_k) = \frac{1}{2\log F_N(s_k)} < \frac{1}{(2\log 2)(k^2-2k-1)}\,,
$$
and therefore
\begin{equation}\label{Gtk}
G_N(s_k) \le G_N(s_{26}) < \frac{1}{1246\log 2} < \frac{1}{863} \,.
\end{equation}
We next claim that
%, for all $k\ge 6$,
\begin{equation}\label{lemma1eq}
%\frac{F''(t_k) F(t_k)}{F'(t_k)^2}  \ge \frac{10}{9}\,.
H_N(s_k) = \frac{F_N''(s_k) F_N(s_k)}{F_N'(s_k)^2} - 1 > \frac{1}{10}\,,
\end{equation}
and note that combining (\ref{Gtk}) and (\ref{lemma1eq}) will establish (\ref{HdominatesG}).

To prove (\ref{lemma1eq}), note that
\begin{equation}\label{lowerF2}
\frac{F_N''(s_k)}{(\log 2)^2} >  \sum_{j=k}^{k+1} 4^j 2^{j^2- 2^js_k} =
 (4^k+4^{k+1}) 2^{k^2-2^k s_k} = 
% 5\cdot 4^k 2^{k^2-2k-1} =
 \frac{5}{2} 4^k 2^{k^2-2k}\,,
\end{equation}
so combining with (\ref{lowerF})
gives
\begin{equation}\label{numerator}
\frac{ F_N(s_k) F_N''(s_k)}{(\log 2)^2} >   \frac{5}{2} 4^k 2^{2k^2-4k} \,.
\end{equation}

We now establish an upper bound on
\begin{equation}\label{Fderivbound}
-\frac{F_N'(s_k)}{\log 2} = \sum_{j=1}^\infty 2^{j+j^2- 2^js_k}
\end{equation}
by combining the exact evaluation of the two-term sum
\begin{equation}\label{exact}
\sum_{j=k}^{k+1} 2^{j+j^2- 2^js_k} = 
(2^k+2^{k+1}) 2^{k^2-2^ks_k} = 3\cdot 2^k 2^{k^2-2k-1} =\frac{3}{2} 2^k 2^{k^2-2k}
\end{equation}
with upper bounds on the head
\begin{equation}\label{head}
\sum_{j=1}^{k-1} 2^{j+j^2- 2^js_k} 
\end{equation}
and tail
\begin{equation}\label{tail}
\sum_{j=k+2}^N 2^{j+j^2- 2^js_k}  \,.
\end{equation}

To estimate (\ref{head}), 
%recall that
% we may assume that
%$k\ge 4> 1$ (so the sum in (\ref{head}) has non-empty range), and then 
write $j=k-i$ to give
\begin{equation}\label{Dsum}
\sum_{j=1}^{k-1} 2^{j+j^2- 2^js_k}  = \sum_{i=1}^{k-1} 2^{k-i +(k-i)^2-2^{-i}(2k+1)}
=2^{k^2-2k-1/2} D(k)
\end{equation}
where
\begin{equation}
\label{D}
D(k) := \sum_{i=1}^{k-1} 2^{i^2-i-2^{-i}+1/2 -2k(i+2^{-i}-3/2)} 
= \sum_{i=1}^{k-1} 2^{n_k(i)} = 1+  \sum_{i=2}^{k-1} 2^{n_k(i)}
\end{equation}
and
$$
n_k(i):= i^2-i-2^{-i}+1/2 -2k(i+2^{-i}-3/2) \,,
$$
noting that $n_k(1)=0$.
% for all $k\ge 6$.

Clearly $D(2)=1$ and $D(k)>1$.
% for $k\ge 3$.
We claim that
\begin{equation}\label{Dbound}
  D(k)  < \frac54\,.
\end{equation}
To establish (\ref{Dbound}) note that 
%for $k\ge 3$,
if $2\le i\le k-1$ then $n_k(i)\le n_k(2)$, therefore $2^{n_k(i)} \le 2^{n_k(2)}$, so
\begin{equation}\label{3rdsup}
\sum_{i=2}^{k-1} 2^{n_k(i)} \le (k-2)2^{n_k(2)} = (k-2)2^{9/4 -3k/2}\,.
\end{equation}
The righthand side of (\ref{3rdsup}) is decreasing in $k$, and we are assuming that $k\ge 26$, so
\begin{equation}\label{3rdsupsup}
 \sum_{i=2}^{k-1} 2^{n_k(i)} \le
 24 \times 2^{9/4 - 39} < 10^{-9} \,,
 %  \sup_{k\ge3} (k-2)2^{9/4 -3k/2} = 2^{9/4 -9/2}=2^{-9/4}\,,
\end{equation}
therefore combining (\ref{3rdsupsup}) with (\ref{D}) easily gives the claimed bound (\ref{Dbound}).

 Combining (\ref{Dbound}) with (\ref{Dsum}) then gives
 \begin{equation}\label{headbound}
 \sum_{j=1}^{k-1} 2^{j+j^2- 2^js_k} < \frac54 2^{k^2-2k-1/2} = \frac{5}{4\sqrt{2}} 2^k 2^{k^2-2k}2^{-k}\,,
 \end{equation}
which is our desired bound on the head (\ref{head}).

We now wish to estimate the tail (\ref{tail}), and for this write $j=k+i$ so that
\begin{equation}\label{isum}
\sum_{j=k+2}^N 2^{j+j^2- 2^js_k} = \sum_{i=2}^{N-k} 2^{k+i+(k+i)^2-2^i(2k+1)}
=2^{k^2+k} \sum_{i=2}^{N-k} 2^{i+i^2-2^i+2k(i-2^i)}
\end{equation}
and then write
\begin{equation}\label{isum2}
 \sum_{i=2}^{N-k} 2^{i+i^2-2^i+2k(i-2^i)} 
 = 2^{2-4k}\left(1+\sum_{i=3}^{N-k} 2^{i+i^2-2^i-2 + 2k(2+i-2^i)}\right)\,.
 \end{equation}
 Now $k\ge1$, and $2+i-2^i<0$ for $i\ge3$, so $2k(2+i-2^i)\le 4+2i-2^{i+1}$, therefore
 $$
\sum_{i=3}^{N-k}  2^{i+i^2-2^i-2 + 2k(2+i-2^i)} < \sum_{i=3}^{N-k} 2^{i^2+3(i-2^i)+2}
\le 2^{-4}+2^{-18}+2^{-54}+\ldots < \frac{1}{8}\,,
$$
so using this estimate in (\ref{isum2}) 
gives
$$
 \sum_{i=2}^{N-k} 2^{i+i^2-2^i+2k(i-2^i)} < \frac98 2^{2-4k} \,,
 $$
and substituting into (\ref{isum}) yields
\begin{equation}\label{tailbound}
\sum_{j=k+2}^N 2^{j+j^2- 2^js_k}
<
\frac{9}{8}2^{k^2+k+2-4k} 
%= \frac92 2^{k^2-3k} 
= \frac92 2^k 2^{k^2-2k} 2^{-2k} \,,
\end{equation}
which is our desired bound on the tail (\ref{tail}).
 
 Substituting
 into
(\ref{Fderivbound})
the identity (\ref{exact}), 
together with the bounds (\ref{headbound})
and  (\ref{tailbound}),
 gives
 \begin{equation*}\label{Fderivestimate}
-\frac{F_N'(s_k)}{\log 2} 
< 
%\frac{3}{2} 2^k 2^{k^2-2k} + \frac92 2^{k^2-3k} + \frac54 2^{k^2-2k-1/2}
2^k 2^{k^2-2k}\left(\frac32 + \frac{5}{4\sqrt{2}} 2^{-k}+ \frac92 2^{-2k} \right) \,,
 \end{equation*}
 and therefore
 \begin{equation}\label{denomFprime}
 \frac{F_N'(s_k)^2}{(\log 2)^2} < 4^k 2^{2k^2-4k} \left(\frac32 + \frac{5}{4\sqrt{2}} 2^{-k}+ \frac92 2^{-2k} \right)^2 \,.
 \end{equation}
 
 Combining (\ref{numerator}) and (\ref{denomFprime}) gives
 \begin{equation}\label{almostfinal}
 \frac{F_N(s_k) F_N''(s_k)}{F_N'(s_k)^2} > \frac{5}{2}\left(\frac32 + \frac{5}{4\sqrt{2}} 2^{-k}+ \frac92 2^{-2k} \right)^{-2}
 =\frac{10}{9}\left( 1+ \frac{5}{6\sqrt{2}} 2^{-k}+ 3\cdot 2^{-2k} \right)^{-2}\,.
 \end{equation}
 
Note that the righthand side of (\ref{almostfinal}) is increasing in $k$, and since $k\ge 26$ then
 \begin{equation*}
 \frac{F_N(s_k) F_N''(s_k)}{F_N'(s_k)^2} > \frac{10}{9}\left( 1+ \frac{5}{6\sqrt{2}} 2^{-26}+ 3\cdot 2^{-52} \right)^{-2}
 = 1.11111109159\ldots
 > \frac{11}{10}
 \,,
 \end{equation*}
and therefore
% \begin{equation*}
% H(t_k) = \frac{F(t_k) F''(t_k)}{F'(t_k)^2} - 1 
%  > \frac{1}{12}
 %\quad\text{for all }k\ge 6\,,
 %\end{equation*}
 % which is the required inequality 
 (\ref{lemma1eq}) follows, as required.
%This concludes the proof of (\ref{HdominatesG}), and of the proposition.
\end{proof}

Although the map $T_N$ is defined so that its derivative is identical on various branches, it should be apparent that
this is a convenience rather than a strict condition.
The dependence of the Lyapunov spectrum (with respect to e.g.~some $C^k$ topology)
on the underlying map is continuous, so that 
for example a sufficiently small perturbation of $T_N$ to a piecewise linear map $\tilde T_N$ on a slightly different collection
of intervals
 $\tilde \X_N=\{\tilde X_i\}_{i=1}^{q_N}$,
% (with the same cardinality $q_N$)
 such that the lengths $|\tilde X_i|$ 
 (and hence the derivatives of the various branches)
 are all distinct,
will not change the number of Lyapunov inflections.

Similarly, $T_N$ could be perturbed in a nonlinear way, yielding a nearby nonlinear 
(full branch) expanding map with the same number of Lyapunov inflections.
More precisely, given $\X_N := \{X_i\}_{i=1}^{q_N}$
as in Definition \ref{TNdefn}, for any integer $k\ge 1$ let $C^k_N$ denote the set of maps 
$\cup_{i=1}^{q_N} X_i\to [0,1]$ such that the restriction to each $X_i$ is a $C^k$ diffeomorphism onto $[0,1]$,
equipped with the $C^k$ norm $\|T\|_k=\sup\{|T^{(i)}(x)|: x\in \cup_{i=1}^{q_N} X_i, 0\le i\le k\}$, where $T^{(i)}$ denotes the $i$-th order derivative of $T$.
As noted in Remark \ref{openingremark} (c), 
the definitions and basic properties of Lyapunov spectra detailed in subsections \ref{lyap2subsection}, \ref{characterisationssubsection} and \ref{derivativessubsection} are valid
for expanding members of $C^k_N$, in particular those that are sufficiently $C^k$-close to $T_N$, 
and we deduce:

\begin{theorem}
For all integers $k\ge 1$, $N\ge 27$, there is a non-empty open subset of $C^k_N$ whose elements all have at least $2(N-26)$ Lyapunov inflections.
\end{theorem}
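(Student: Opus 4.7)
The plan is to show that the conclusion of Theorem \ref{finiteinflectionstheorem} is stable under small $C^k$-perturbations, and to take the required non-empty open subset to be a suitable $C^k$-neighbourhood of $T_N$ itself (note that $T_N \in C^k_N$ for every $k$ since each branch is affine).

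The first step is to recast the inflection count for $T_N$ in a form that extends to general expanding $T \in C^k_N$. By the corollary following Proposition \ref{2ndderivLprop}, for any expanding $T \in C^k_N$ the Lyapunov inflections of $T$ correspond to the zeros of $2 p_T(t) p_T''(t) - p_T'(t)^2$, where $p_T(t) = P(t \log|T'|)$. Writing $s = -t$ and defining
\[
\tilde{G}_T(s) := \frac{1}{2\, p_T(-s)}, \qquad \tilde{H}_T(s) := \frac{p_T''(-s)}{p_T'(-s)^2},
\]
Lyapunov inflections of $T$ correspond to the solutions of $\tilde{G}_T(s) = \tilde{H}_T(s)$; for $T = T_N$ this reduces (via (\ref{pF})) to the characterisation used in Proposition \ref{keyequationprop}. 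The proof of Proposition \ref{finitesolutionsprop} established the strict inequalities $\tilde{G}_{T_N}(m_k) > \tilde{H}_{T_N}(m_k)$ for $26 \le k \le N$ and $\tilde{G}_{T_N}(s_k) < \tilde{H}_{T_N}(s_k)$ for $26 \le k \le N-1$, from which the intermediate value theorem applied to the $2(N-26)$ disjoint intervals $(m_{k+1}, s_k)$ and $(s_k, m_k)$, $26 \le k \le N-1$, produced a solution in each.

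The second step is to invoke continuous dependence of the pressure on the map. For uniformly $C^k$-expanding maps (with $k \ge 1$), spectral perturbation theory for the transfer operator on a suitable Banach space (see e.g.~\cite{ruelle, parrypollicott}) shows that, for each fixed $t \in \R$, the map $T \mapsto (p_T(t), p_T'(t), p_T''(t))$ is continuous in the $C^k$-topology on the open subset of $C^k_N$ consisting of expanding maps. Since $T_N$ is strongly expanding ($|T_N'| \ge 2^{64}$ on every branch), a sufficiently small $C^1$-neighbourhood of $T_N$ within $C^k_N$ consists entirely of expanding maps, so this continuity applies throughout. Evaluating at the finitely many points $s = m_k$ ($26 \le k \le N$) and $s = s_k$ ($26 \le k \le N-1$), we obtain a $C^k$-open neighbourhood $\mathcal{U}$ of $T_N$ on which all of the finitely many strict inequalities above persist; the intermediate value argument then runs verbatim for every $T \in \mathcal{U}$, producing at least $2(N-26)$ Lyapunov inflections.

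The principal technical ingredient — and essentially the only nontrivial point — is the pointwise $C^k$-continuity of $p_T$ and its first two derivatives at a fixed $t$. This is classical, but relies on staying inside the open set of uniformly expanding maps so that the transfer operator possesses a simple leading eigenvalue with a spectral gap; the strong expansion of $T_N$ makes this automatic in a sufficiently small $C^1$-neighbourhood, so no further work is needed.
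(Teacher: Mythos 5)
Your proposal is correct, and it follows the same underlying strategy as the paper: the paper's ``proof'' of this theorem is essentially the two paragraphs preceding its statement, which assert that the Lyapunov spectrum depends continuously on the map in a $C^k$ topology and that the theory of \S\ref{preliminariessection} applies to expanding members of $C^k_N$, and then deduce the theorem in one line. What you add is the detail that makes this deduction actually work: mere closeness of the spectra would not by itself control the \emph{number} of inflection points (one would need closeness of $L''$, or some transversality), so your reduction to the persistence of the finitely many strict inequalities $\tilde G_{T}(m_k)>\tilde H_{T}(m_k)$ and $\tilde G_{T}(s_k)<\tilde H_{T}(s_k)$ from the proof of Proposition~\ref{finitesolutionsprop}, via continuity of $T\mapsto(p_T(t),p_T'(t),p_T''(t))$ at finitely many values of $t$, is exactly the right robust formulation; it also correctly identifies that the solutions live in the $t$-variable, where the domain is all of $\R$, so no issue arises about whether the perturbed domain $\mathring{A}_T$ still contains the relevant $\alpha$'s. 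One caveat, which your argument shares with the paper's own statement rather than introducing: for $k=1$ the potential $\log|T'|$ of a generic element of a $C^1$-open set is merely continuous, not H\"older, so the spectral-gap/perturbation machinery you cite from \cite{ruelle,parrypollicott} does not apply off the shelf; the argument is clean for $k\ge2$ (where one can also note that strict convexity of $p_T$, needed for the denominator in (\ref{Lsecondderiv}), persists near $T_N$ since $T_N$ has branches of very different slopes), and for $k=1$ both you and the paper are implicitly relying on a stronger continuity statement than the cited references provide.
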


\section{An infinite branch map with infinitely many Lyapunov inflections}\label{infinitebranchsection}

Using essentially the same strategy as in \S \ref{manyinflectionssection}, we can prove the following:

\begin{theorem}\label{infiniteinflectionsthm}
There is a countably infinite branch piecewise linear map whose Lyapunov spectrum
has a countable infinity of inflection points.
\end{theorem}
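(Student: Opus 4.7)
The plan is to follow the strategy of Section \ref{manyinflectionssection} essentially verbatim, but allow the construction to proceed for all $j\ge 6$ simultaneously. First, I would construct the map: for each integer $j\ge 6$ fix $2^{j^2}$ pairwise disjoint closed sub-intervals of $[0,1]$, each of length $2^{-2^j}$; the union of all these intervals (over all $j\ge 6$) has total Lebesgue measure $\sum_{j=6}^\infty 2^{j^2-2^j}<10^{-8}$, so they can indeed be chosen pairwise disjoint in $[0,1]$. Let $T$ be any associated countably-infinite-branch piecewise linear map, the restriction of which to each interval is an affine bijection onto $[0,1]$.

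Next, I would verify that the machinery of Section \ref{preliminariessection} carries over. The characterisation (\ref{Lcharacterisation1}) extends to this countable branch setting by \cite[Thm.~1.3]{iommijordan} (cf.~Remark (a) following Proposition \ref{Lcharacterisation1prop}), and one checks that the derivations in subsections \ref{derivativessubsection} and \ref{pwlinearsubsection} go through unchanged on the interior of the Lyapunov spectrum's domain, with the pressure now given by $p(t)=\log F(-t)$, where
\[
F(s)=\sum_{j=6}^\infty 2^{j^2-2^j s}
\]
is absolutely convergent and $C^\infty$ on $(0,\infty)$, termwise differentiation producing analogous series for $F'$ and $F''$ on the same domain. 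In particular, the inflection criterion (\ref{keyequation_initial}) from Proposition \ref{keyequationprop} remains the key equation to solve.

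The main step is to show, for this infinite-branch $F$, the analogues of (\ref{GdominatesH})--(\ref{HdominatesG}) for \emph{every} $k\ge 26$: namely $G(m_k)>H(m_k)$ and $G(s_k)<H(s_k)$, where $G(s):=1/(2\log F(s))$ and $H(s):=F(s)F''(s)/F'(s)^2-1$. The key observation is that the estimates in the proof of Proposition \ref{finitesolutionsprop} already bounded every head and tail contribution by \emph{infinite} geometric-type series whose values are independent of $N$ --- for instance $a<17/10$ in (\ref{abound}), $b<3/2$ in (\ref{bbound}), $c=16/5$ in (\ref{cbound}), and the bound $<1/8$ for the tail preceding (\ref{tailbound}). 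Replacing $F_N$ by $F$ only enlarges the head and tail contributions by these same uniform constants, so (\ref{Hestimate}), (\ref{Gestimate}), (\ref{almostfinal}) and (\ref{lemma1eq}) transfer verbatim; the consequence is that the endpoint restriction $k\le N-1$ disappears, and both $G(m_k)>H(m_k)$ and $G(s_k)<H(s_k)$ hold for every $k\ge 26$. The intermediate value theorem then produces at least one solution of (\ref{keyequation_initial}) in each of the pairwise disjoint intervals $(m_{k+1},s_k)$ and $(s_k,m_k)$, $k\ge 26$; since $\alpha=p'(-s)$ is a $C^\omega$ diffeomorphism by (\ref{inverses}), these yield infinitely many distinct Lyapunov inflections $\alpha$.

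The principal obstacle I anticipate is technical rather than structural: spelling out that the thermodynamic-formalism identities underlying Proposition \ref{keyequationprop} persist in the countable-branch setting --- in particular the real analyticity and strict convexity of $p$ on the interior of its domain $\{t:F(-t)<\infty\}$, the derivative formula (\ref{pderivformula}), and the bijectivity of $p'$ from this interior onto $\mathring{A}$. All of these follow from standard theory for uniformly expanding countable Markov systems together with the uniform convergence of $F$, $F'$, $F''$ on compact subsets of $(0,\infty)$, but deserve careful justification since the inflection parameters $s_k, m_k$ accumulate at $0$, approaching the boundary of the $s$-domain on which $F$ is defined.
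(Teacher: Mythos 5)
Your proposal is correct and follows essentially the same route as the paper: the same countable collection of $2^{j^2}$ intervals of length $2^{-2^j}$ for $j\ge 6$, the same appeal to the countable-branch multifractal formalism (with $F$ defined for $s>0$ and $p(t)=\log F(-t)$ for $t<0$), and the same observation that the head/tail estimates of Proposition \ref{finitesolutionsprop} are uniform in $N$, so the analogues of (\ref{GdominatesH}) and (\ref{HdominatesG}) hold for all $k\ge 26$ and the intermediate value theorem yields infinitely many solutions of (\ref{keyequation_initial}). Your closing caution about justifying the thermodynamic identities near the boundary $s\to 0$ is a point the paper passes over quickly, but it does not change the argument.
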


To prove Theorem \ref{infiniteinflectionsthm} we can define $\X=\{X_i\}_{i=1}^\infty$ to be a countable collection
of pairwise disjoint closed sub-intervals of $[0,1]$,
consisting of $2^{j^2}$ intervals of length $2^{-2^j}$ for all $j\ge 6$.
By analogy with Definition \ref{piecewiselinearmapdefn} we then define $T$ to be a full branch affine homeomorphism
onto $[0,1]$ on each interval in $\X$, with the analogous definition of its invariant set. 
Such a $T$ is in particular an \emph{expanding Markov-R\'enyi map} (see e.g.~\cite{iommijordan} for multifractal analysis in this general context),
and the collection of its inverse branches 
%$\{T_i\}_{i\in \I}$ 
is a \emph{conformal iterated function system}
(see e.g.~\cite{hanusmauldinurbanski} for multifractal analysis in this general context).

The theory of \S \ref{preliminariessection} holds almost verbatim in this countable branch setting, the only significant difference being that
$$
F(t)  = \sum_{i=1}^\infty |X_i|^{t}
$$
is defined only for $t>0$,
and
$$
p(t) = \log F(-t)\quad\text{for all }t<0\,.
$$
The approach of Proposition \ref{finitesolutionsprop}
can then be used
%, essentially verbatim, 
to establish that
\begin{equation*}\label{keyequation}
\frac{1}{2\log F(s)} =  \frac{F''(s) F(s)}{F'(s)^2} -1
\end{equation*}
has infinitely many solutions, by again simply proving the analogues of (\ref{GdominatesH}) and
(\ref{HdominatesG}) for all $k\ge 26$, and this yields Theorem \ref{infiniteinflectionsthm}.

We also deduce the following:

\begin{cor}\label{infinitelymanyinflectionscorollary}
There is a countably infinite branch piecewise linear map such that
for all $k\ge 2$, its Lyapunov spectrum has
its $k^{th}$ order derivative equal to zero at infinitely many distinct points.
\end{cor}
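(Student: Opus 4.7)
The plan is to take as the map the very same infinite branch piecewise linear map $T$ furnished by Theorem \ref{infiniteinflectionsthm}, and then deduce the higher-order statement by a single elementary observation: iterated application of Rolle's theorem. Since the theory of \S \ref{preliminariessection} carries over to the countable branch setting (as indicated in \S \ref{infinitebranchsection}), the Lyapunov spectrum $L$ of $T$ is real analytic on the open interval $\mathring{A}=(\alpha_{\min},\alpha_{\max})$, so every derivative $L^{(k)}$ exists and is continuous (indeed differentiable) there. Moreover, by construction (via the analogue of Proposition \ref{keyequationprop}), the infinitely many inflection points produced by Theorem \ref{infiniteinflectionsthm} all lie in the interior $\mathring{A}$, which is crucial for Rolle's theorem to be applicable.

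The argument then proceeds by induction on $k\ge 2$. The base case $k=2$ is precisely Theorem \ref{infiniteinflectionsthm}. For the induction step, suppose $L^{(k)}$ vanishes at an infinite set of distinct points $\alpha_1<\alpha_2<\alpha_3<\cdots$ in $\mathring{A}$. For each $i\ge 1$, the function $L^{(k)}$ is continuous on $[\alpha_i,\alpha_{i+1}]$ and differentiable on $(\alpha_i,\alpha_{i+1})$, and vanishes at both endpoints, so Rolle's theorem produces a point $\beta_i\in(\alpha_i,\alpha_{i+1})$ with $L^{(k+1)}(\beta_i)=0$. Because the open intervals $(\alpha_i,\alpha_{i+1})$ are pairwise disjoint, the resulting $\{\beta_i\}_{i\ge 1}$ is an infinite set of pairwise distinct zeros of $L^{(k+1)}$, completing the induction.

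I do not expect any serious obstacle here: the entire content of the corollary is carried by Theorem \ref{infiniteinflectionsthm} together with the real analyticity of $L$ on $\mathring{A}$. The only point that deserves verification is that one genuinely obtains an infinite sequence at each stage of the induction (rather than having the zeros accumulate and Rolle's theorem producing only finitely many \emph{new} zeros), but this is immediate since each application of Rolle's theorem yields one $\beta_i$ in each of the infinitely many disjoint gaps $(\alpha_i,\alpha_{i+1})$.
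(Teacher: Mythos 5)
Your proof is correct and is exactly the intended deduction: the paper offers no explicit argument for this corollary (it simply states "We also deduce the following" after Theorem \ref{infiniteinflectionsthm}), and the iterated Rolle's theorem argument, using the real analyticity of $L$ on $\mathring{A}$ and the disjointness of the gaps between consecutive zeros, is the standard way to fill it in. No gaps.
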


\section{Examples with a specified number of Lyapunov inflections}\label{examplessection}

In this section we 
consider some specific piecewise linear maps with strictly more than two points of inflection in their
Lyapunov spectra.
By way of a complement to the explicit examples of Figures \ref{4figure} and \ref{6figure}, and the maps $T_N$ of \S
\ref{manyinflectionssection}, the cases considered here show that a given number of Lyapunov inflections
(namely 4, 6, and 8) can be attained for maps with relatively few branches
(namely 7, 62, and 821), an issue we return to in Question \ref{minbranches} below.
The maps are defined as follows.

\begin{example} (The map $S_4$)

\noindent
Let $\{X_i\}_{i=1}^7$ be a collection of 7 pairwise disjoint closed sub-intervals of $[0,1]$,
where $X_1$ has length $4/5$, the five intervals $X_2,\ldots, X_6$ all have length $1/65 = (4/5)(1/52)$,
and $X_7$ has length $1/3380 = (4/5)(1/52)^2$, and
let $S_4$ be any piecewise linear map associated to $\{X_i\}_{i=1}^7$.
%As a concrete realisation of $S_4$ we might for example take $X_1=[0,4/5]$,
%$X_2=[53/65,54/65]$,
%$X_3=[55/65,56/65]$,
%$X_4=[57/65,58/65]$
%$X_5=[59/65,60/65]$,
%$X_6=[61/65,62/65]$,
%$X_7=[3379/3380,1]$,
%and define $S_4(x)=5x/4$ for $x\in X_1$,
%$S_4(x)=65x-(49+2i)$ for $x\in X_i$, $2\le i\le 6$,
%and $S_4(x)=3380x-3379$ for $x\in X_7$.
The invariant set for $S_4$ has Hausdorff dimension equal to $0.8167\ldots$,
and its Lyapunov spectrum has domain $[\log(5/4),\log(3380)] = [ 0.2231\ldots, 8.1256\ldots]$.
\end{example}

\begin{example} (The map $S_6$)

\noindent
Let $\{X_i\}_{i=1}^{62}$ be a collection of 62 pairwise disjoint closed sub-intervals of $[0,1]$,
where one interval has length $4/5$, thirty intervals have length $1/400=(4/5)(1/320)$,
another thirty intervals have length $1/128000=(4/5)(1/320)^2$,
and the remaining interval has length $1/40960000 =(4/5)(1/320)^3$, and
let $S_6$ be any piecewise linear map associated to $\{X_i\}_{i=1}^{62}$.
The invariant set for $S_6$ has Hausdorff dimension equal to
$0.8600\ldots$,
and its Lyapunov spectrum has domain $[\log(5/4),\log(40960000)] = [ 0.2231\ldots, 17.5281\ldots]$.
\end{example}

\begin{example} (The map $S_8$)

\noindent
Let $\{X_i\}_{i=1}^{821}$ be a collection of 821 pairwise disjoint closed sub-intervals of $[0,1]$,
where one interval has length $4/5$, 125 intervals have length $1/2000=(4/5)(1/1600)$,
585 intervals have length $1/3200000=(4/5)(1/1600)^2$,
109 intervals have length $1/5120000000=(4/5)(1/1600)^3$,
and the remaining interval has length $1/8192000000000 =(4/5)(1/1600)^4$, and
let $S_8$ be any piecewise linear map associated to $\{X_i\}_{i=1}^{821}$.
The invariant set for $S_8$ has Hausdorff dimension equal to
$0.865154\ldots$,
and its Lyapunov spectrum has domain $[\log(5/4),\log(8192000000000)] = [ 0.2231\ldots, 29.7341\ldots]$.
\end{example}

\begin{theorem}\label{468}
For $n=4, 6,8$, the Lyapunov spectrum for the map $S_n$ has precisely $n$ points of inflection.
\end{theorem}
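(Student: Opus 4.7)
The strategy follows the same paradigm as Proposition~\ref{finitesolutionsprop}: invoke Proposition~\ref{keyequationprop} and analyse the auxiliary functions
$$G(s) := \frac{1}{2\log F(s)}, \qquad H(s) := \frac{F''(s)F(s)}{F'(s)^2} - 1,$$
showing that they have exactly $n$ crossings in the interior of the Lyapunov domain for $n = 4, 6, 8$. The key structural observation is that in each example the set of distinct branch lengths forms a geometric progression: writing $c_n = 52, 320, 1600$ for $n = 4, 6, 8$ respectively, one has
$$F_{S_n}(s) = (4/5)^s\, P_n\bigl((1/c_n)^s\bigr),$$
with $P_4(u) = 1 + 5u + u^2$, $P_6(u) = 1 + 30u + 30u^2 + u^3$, and $P_8(u) = 1 + 125u + 585u^2 + 109u^3 + u^4$. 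Introducing $u = (1/c_n)^s$ (a monotone change of variable giving a bijection $\mathbb{R} \to (0,\infty)$) converts the inflection equation into a transcendental equation involving $P_n(u)$, $P_n'(u)$, $P_n''(u)$ and a single logarithmic term in $\log F$.

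The plan is then divided into a lower bound and an upper bound. For the lower bound, I would identify $n+1$ explicit test values $s_0 < s_1 < \cdots < s_n$ in the interior domain at which $G - H$ alternates in sign, and invoke the intermediate value theorem to produce $n$ distinct inflection points; the test values are to be guided by the zeros and inflection points of $P_n$ itself, by analogy with the choice of $m_k$ and $s_k$ in Section~\ref{manyinflectionssection}. For the upper bound — that there are at most $n$, hence exactly $n$, inflections — I would show that in each of the $n$ intervals delimited by the test values $G-H$ is strictly monotonic (so the crossing produced by IVT is unique), and rule out further crossings in the two unbounded tails $s \to 0^+$ and $s \to +\infty$ via asymptotic estimates on $G$ and $H$.

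The main obstacle is the upper bound, since unlike in Section~\ref{manyinflectionssection} we cannot tolerate any extra solutions; the monotonicity of $G-H$ on each sub-interval is the delicate part. Because the multiplicities $(1,5,1)$, $(1,30,30,1)$, $(1,125,585,109,1)$ are chosen ad hoc precisely to produce the stated inflection counts, I expect the verification to proceed as three separate case analyses, each relying on explicit numerical estimates rather than a uniform argument; in effect one bounds the derivative $(G-H)'$ away from zero on each designated interval using the polynomial structure of $P_n$. This case-by-case nature is consistent with the remark in the paper that the examples are ad hoc, and it stands in contrast to the uniform asymptotic analysis available when one is content with merely a lower bound.
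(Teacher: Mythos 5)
Your proposal follows essentially the same route as the paper: reduce via Proposition~\ref{keyequationprop} to counting the zeros of $I=G-H$, produce $n$ of them by sign alternation and the intermediate value theorem, and certify that there are no others by strict monotonicity of $I$ on designated subintervals together with bounds keeping $I$ away from zero on the complementary intervals; the paper likewise treats the three maps as separate, numerically certified case analyses rather than by a uniform argument. Your substitution $u=(1/c_n)^s$ exploiting the geometric-progression structure of the branch lengths is a correct extra observation not used in the paper, though it buys little, since the prefactor $(4/5)^s$ and the term $\log F$ prevent the inflection equation from becoming algebraic in $u$.

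One point in your tail analysis needs correcting. For these finite-branch maps the variable $s=-\tau(\alpha)$ ranges over all of $\mathbb{R}$ (several of the computed inflections indeed have $s<0$), so the unbounded tails are $s\to-\infty$ and $s\to+\infty$, not $s\to 0^+$; presumably you were thinking of the variable $u$. More substantively, the right tail contains the point $s=\dim_H(X)$, where $F(s)=1$, so that $G(s)=1/(2\log F(s))$ has a singularity and changes sign there; ruling out extra zeros of $I$ to the right of the last test value is therefore not a pure asymptotic estimate as $s\to+\infty$, but requires showing $I>0$ on $(s_n^+,\dim_H(X))$ and $I<0$ on $(\dim_H(X),\infty)$, which is exactly how the paper disposes of that region.
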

\begin{proof}
In view of Proposition \ref{keyequationprop} it suffices to show that there are precisely $n$ solutions 
$s_1< s_2<\ldots < s_n$ to equation (\ref{keyequation_initial}).
With $F(t) = \sum_{i=1}^q |X_i|^{t}$ and $p(t) = \log F(-t)$, the corresponding $n$ points of inflection
of $L$ are (by Proposition \ref{keyequationprop}) given by 
$$\alpha_i = p'(-s_i) = \frac{-F'(s_i)}{F(s_i)}$$ 
for $i=1,\ldots, n$.

Writing 
$$G(s):=\frac{1}{2\log F(s)} \ , \quad H(s):=\frac{F(s)F''(s)}{F'(s)^2} -1\ ,$$ 
and $I:=G-H$, the solutions of 
(\ref{keyequation_initial}) are precisely the zeros of the function $I$.

For the map $S_4$, the corresponding function $F$ is given explicitly by
$$
F(s) = \left(\frac{4}{5}\right)^s + 5\left(\frac{1}{65}\right)^s + \left(\frac{1}{3380}\right)^s\,,
$$
while for $S_6$ it is given by
$$
F(s) = \left(\frac{4}{5}\right)^s + 30\left(\frac{1}{400}\right)^s  + 30\left(\frac{1}{128000}\right)^s + \left(\frac{1}{40960000}\right)^s\,,
$$
and for $S_8$ it is given by
$$
F(s) = \left(\frac{4}{5}\right)^s + 125\left(\frac{1}{2000}\right)^s  + 585\left(\frac{1}{3200000}\right)^s + 109\left(\frac{1}{5120000000}\right)^s+ \left(\frac{1}{8192000000000}\right)^s\,.
$$

In each of the three cases the corresponding function $I$, which we denote respectively by $I_4$, $I_6$, $I_8$,
%has an explicit closed form which allows it to 
can be graphed (see Figures \ref{i4}, \ref{i6}, \ref{i8}, together with graphs of the corresponding second derivative of the Lyapunov spectrum), and its zeros located (to arbitrary precision, using e.g.~the bisection method).
For the map $S_4$, four zeros of $I_4$ are located,
at $s_1\approx-0.3975$, 
$s_2 \approx -0.0565$, 
$s_3\approx0.0616$, 
$s_4\approx0.4751$,
with corresponding Lyapunov inflections
$\alpha_1\approx5.9895$,
$\alpha_2 \approx 4.4269$,
$\alpha_3 \approx 3.8988$,
$\alpha_4\approx2.0166$.
For the map $S_6$, six zeros of $I_6$ are located,
at
$s_1\approx -0.5654$,
$s_2\approx-0.4724$,
$s_3\approx-0.1534$,
$s_4\approx0.2269$,
$s_5\approx0.4275$,
$s_6\approx0.6236$,
with corresponding Lyapunov inflections
$\alpha_1\approx 14.2732$,
$\alpha_2\approx 13.3822$,
$\alpha_3\approx10.4393$,
$\alpha_4\approx6.6173$,
$\alpha_5\approx4.7925$,
$\alpha_6\approx2.9345$.
For the map $S_8$, eight zeros of $I_8$ are located, at
$s_1\approx-0.5655$,
$s_2\approx-0.4827$,
$s_3\approx-0.4538$,
$s_4\approx-0.0660$,
$s_5\approx0.0649$,
$s_6\approx0.4150$,
$s_7\approx0.4891$,
$s_8\approx0.6883$,
with corresponding Lyapunov inflections
$\alpha_1\approx24.6075$,
$\alpha_2\approx23.1999$,
$\alpha_3\approx22.7113$,
$\alpha_4\approx15.8995$,
$\alpha_5\approx13.7839$,
$\alpha_6\approx7.8754$,
$\alpha_7\approx6.7335$,
$\alpha_8\approx3.5981$.

\begin{figure}[!h]
\begin{center}
%\begin{tabular}{cc}
\includegraphics[scale=1.0]{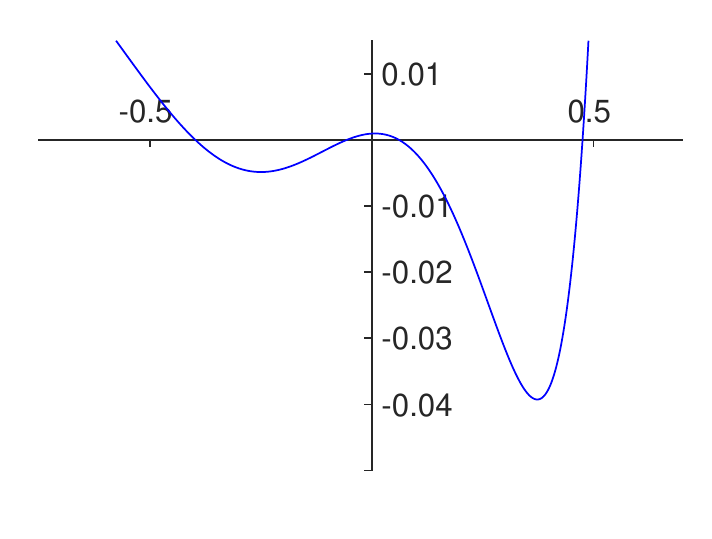}
% \hskip 0.5cm
\includegraphics[scale=1.0]{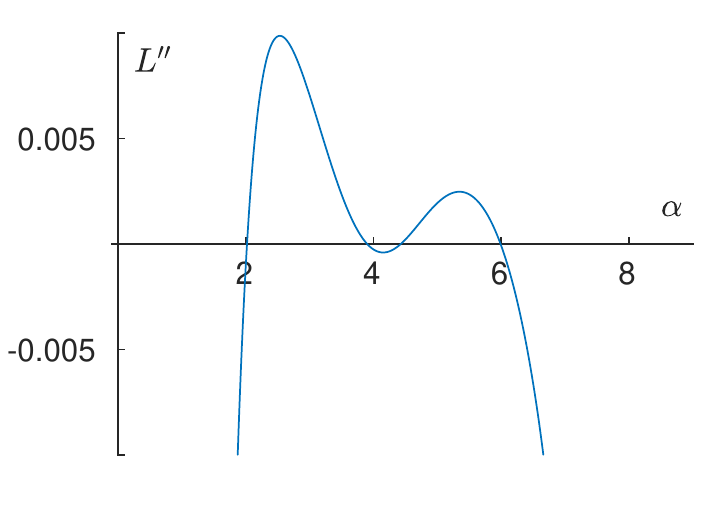}
%\end{tabular}
\caption{Function $I_4$ and the corresponding second derivative $L''$.}\label{i4}
\end{center}
\end{figure}
\begin{figure}[!h]
\begin{center}
%\begin{tabular}{cc}
\includegraphics[scale=1.0]{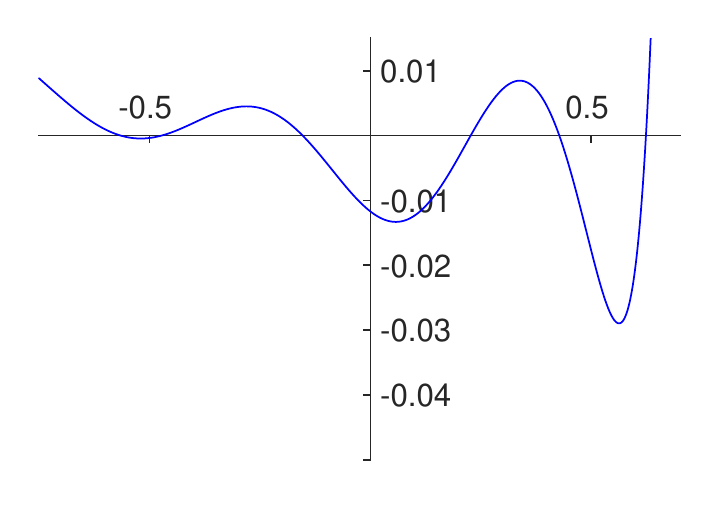}
% \hskip 0.5cm
\includegraphics[scale=1.0]{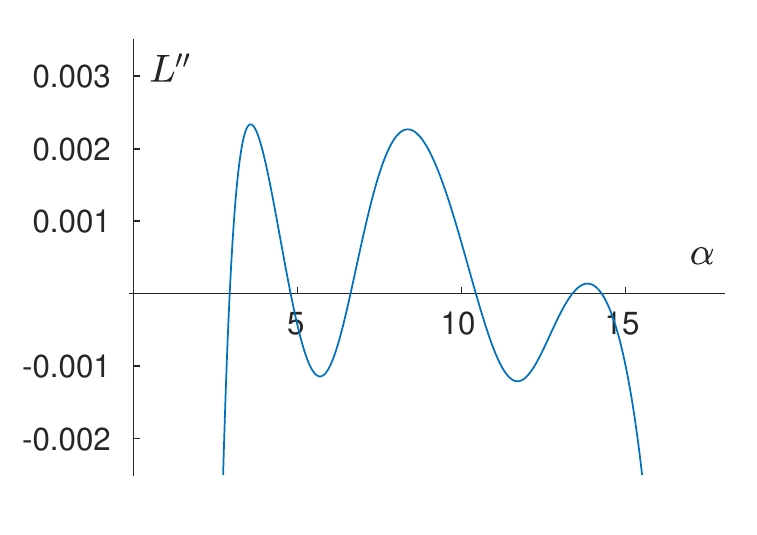}
%\end{tabular}
\caption{Function $I_6$ and the corresponding second derivative $L''$.}\label{i6}
\end{center}
\end{figure}
\begin{figure}[!h]
\begin{center}
%\begin{tabular}{cc}
\includegraphics[scale=1.0]{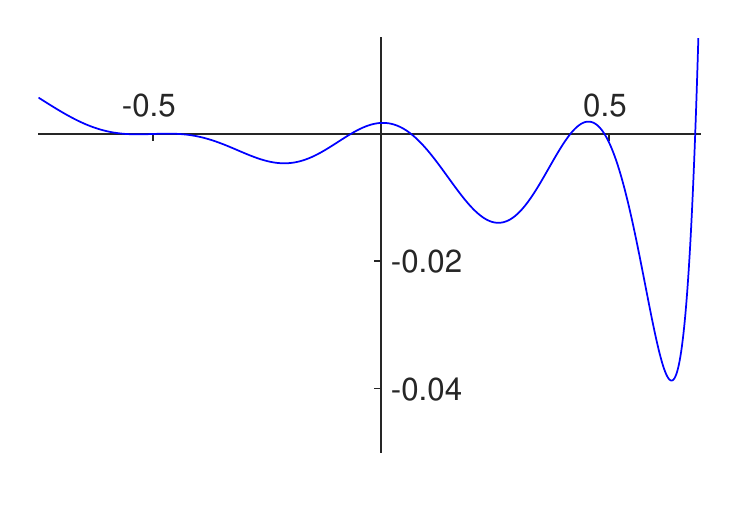} 
%\hskip 0.5cm
\includegraphics[scale=1.0]{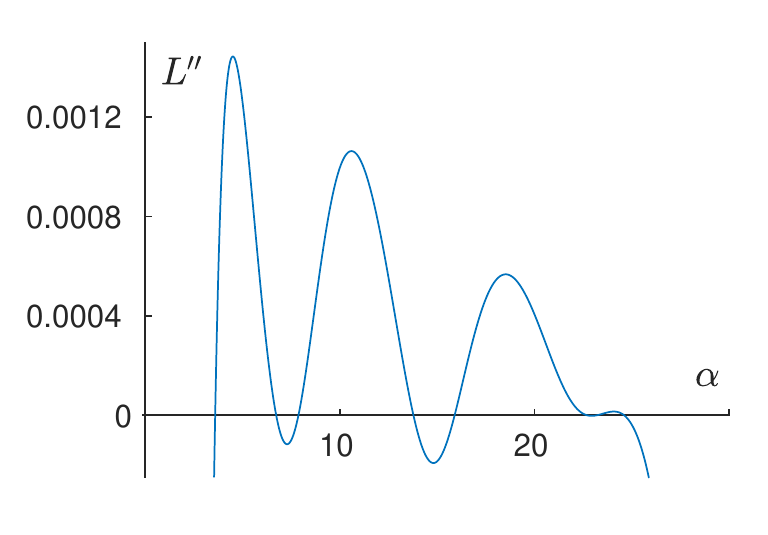}
%\end{tabular}
\caption{Function $I_8$  and the corresponding second derivative $L''$.}\label{i8}
\end{center}
\end{figure}

In each case, the fact that the above are the only zeros of $I=I_n$
can be proved via an intermediate value argument together with
piecewise monotonicity properties.
Specifically, $2n$ points $s_1^-<s_1^+<s_2^-<s_2^+<\ldots<s_n^-<s_n^+$
can be chosen, where $s_n^+<\dim_H(X)$, such that $I(s_i^-)$ and $I(s_i^+)$ have opposite sign,
and $I$ can be shown to be strictly monotone on $(s_i^-,s_i^+)$ (by bounding $I'$ away from zero).
Moreover $I$ can be bounded away from zero on the complementary intervals
$(-\infty, s_1^-)$, $(s_1^+,s_2^-)$, $(s_2^+,s_3^-)$,\ldots, $(s_{n-1}^+,s_n^-)$,
while on $(s_n^+,\infty)$ the function $I$ is strictly positive on $(s_n^+,\dim_H(X))$, has a singularity at
$\dim_H(X)$ (since $F(\dim_H(X))=1$), and is strictly negative on $(\dim_H(X),\infty)$.
\end{proof}

In view of Theorem \ref{twotheorem}, and the 7-branch map $S_4$ with four Lyapunov inflections,
it is natural to wonder if there is a 3-branch piecewise linear map with four Lyapunov inflections,
or failing that whether four Lyapunov inflections occurs for any
piecewise linear map with strictly fewer than 7 branches.
More generally:

\begin{question}\label{minbranches}
For any (even) natural number $n$, what is the smallest number $Q_n$ such that there exists a $Q_n$-branch
piecewise linear map whose Lyapunov spectrum has $n$ points of inflection?
\end{question}

As a consequence of  Theorems \ref{finiteinflectionstheorem}
and \ref{468}, together with \cite{iommikiwi}, we have the following bounds:

\begin{cor}
For all even natural numbers $n$, the smallest number $Q_n$ of branches of a 
piecewise linear map needed to witness $n$ points of inflection in its Lyapunov spectrum satisfies
\begin{enumerate}
\item[\, (i)] $Q_2=2$,
\item[\, (ii)] $Q_4\le 7$,
\item[\, (iii)] $Q_6\le 62$,
\item[\, (iv)] $Q_8\le 821$,
\item[\, (v)] $Q_n\le\sum_{j=6}^{n/2 +26} 2^{j^2} < 2^{(n/2 +26)^2 +1} $ for all $n$.
\end{enumerate}
\end{cor}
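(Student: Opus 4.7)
The plan is to assemble the five bounds directly from results already established in the paper, using Definition~\ref{piecewiselinearmapdefn} and the Iommi--Kiwi construction for (i), Theorem~\ref{468} for (ii)--(iv), and Theorem~\ref{finiteinflectionstheorem} together with an elementary geometric estimate for (v).

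First I would dispatch (i). By Definition~\ref{piecewiselinearmapdefn} every piecewise linear map has at least $2$ branches, yielding the trivial lower bound $Q_n \ge 2$ whenever $Q_n$ is defined, and in particular $Q_2 \ge 2$. The matching upper bound $Q_2 \le 2$ is witnessed by any two-branch piecewise linear map lying in the non-concave regime of Theorem~\ref{2branchquantitativetheorem} (equivalently, \cite[Thm.~A]{iommikiwi}), whose Lyapunov spectrum has exactly two inflection points.

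Parts (ii)--(iv) are then immediate from Theorem~\ref{468}: the maps $S_4$, $S_6$, $S_8$ are piecewise linear maps with $7$, $62$, $821$ branches whose Lyapunov spectra have precisely $4$, $6$, $8$ points of inflection, so each serves as the required witness.

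For (v), given an even $n \ge 2$, I would set $N := n/2 + 26$, which is an integer $\ge 27$, and apply Theorem~\ref{finiteinflectionstheorem} to the map $T_N$: by that theorem $T_N$ has at least $2(N-26) = n$ Lyapunov inflections, and by Definition~\ref{TNdefn} it has exactly $q_N = \sum_{j=6}^{n/2+26} 2^{j^2}$ branches, establishing the first inequality in (v). The case $n = 0$ is trivial ($Q_0 \le 2$). The second inequality $q_N < 2^{N^2+1}$ reduces to a one-line estimate: for $6 \le j \le N-1$ one has
\[
j^2 - N^2 \le (N-1)^2 - N^2 = -(2N-1),
\]
so that
\[
\sum_{j=6}^{N} 2^{j^2} \le 2^{N^2}\bigl(1 + (N-6)\,2^{-(2N-1)}\bigr) < 2 \cdot 2^{N^2} = 2^{N^2+1},
\]
where the final inequality holds comfortably for $N \ge 27$ (since $(N-6)\,2^{-(2N-1)}$ is then far below $1$). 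Substituting $N = n/2 + 26$ produces the claimed bound $2^{(n/2+26)^2+1}$.

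No step presents a serious obstacle; the substantive mathematical content is already packaged in the witnesses supplied by Theorems~\ref{finiteinflectionstheorem} and \ref{468}. The only point requiring a moment of care is the bookkeeping check that the substitution $N = n/2 + 26$ remains in the regime $N \ge 27$ handled by Theorem~\ref{finiteinflectionstheorem}, which is automatic for every even $n \ge 2$.
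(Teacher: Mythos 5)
Your proposal is correct and follows exactly the route the paper intends (the corollary is stated there without a written proof, as an immediate consequence of Theorem~\ref{2branchquantitativetheorem}/\cite{iommikiwi}, Theorem~\ref{468}, and Theorem~\ref{finiteinflectionstheorem}). Your additional one-line estimate $\sum_{j=6}^{N}2^{j^2}\le 2^{N^2}\bigl(1+(N-6)2^{-(2N-1)}\bigr)<2^{N^2+1}$ correctly supplies the only detail the paper leaves implicit.
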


%\begin{figure}[!h]
%\begin{center}
%\includegraphics[scale=1.0]{Ldashdash4}
%\end{center}
%\end{figure}

%%%\begin{figure}[!h]
%%%\begin{center}
%%%\begin{tabular}{cc}
%%%\includegraphics[scale=0.33]{8inflections-L-function}
%%%
%%%&
%%%\includegraphics[scale=0.33]{8inflections-Lprimeprime-function}
%%%\end{tabular}
%%%\end{center}
%%%\end{figure}
%%%
%%%\begin{figure}[!h]
%%%\begin{center}
%%%\begin{tabular}{cc}
%%%\includegraphics[scale=0.33]{8inflections-L-function}
%%%
%%%&
%%%\includegraphics[scale=0.33]{8inflections-Lprimeprime-function}
%%%\end{tabular}
%%%\end{center}
%%%\end{figure}
%%%
%%%\begin{figure}[!h]
%%%\begin{center}
%%%\begin{tabular}{cc}
%%%\includegraphics[scale=0.33]{8inflections-L-function}
%%%
%%%&
%%%\includegraphics[scale=0.33]{8inflections-Lprimeprime-function}
%%%\end{tabular}
%%%\caption{Lyapunov spectra $L$ and second derivative $L''$ for maps $S_4$, $S_6$, $S_8$, with inflection points 
%%%marked}\label{markedinflections}
%%%\end{center}
%%%\end{figure}
%%%
%%%
%%%
%%%\begin{figure}[!h]
%%%\begin{center}
%%%\begin{tabular}{cc}
%%%\includegraphics[scale=0.7]{8inflections-L-function}
%%%
%%%
%%%\end{tabular}
%%%\end{center}
%%%\end{figure}
%%%
%%%\begin{figure}[!h]
%%%\begin{center}
%%%\begin{tabular}{cc}
%%%\includegraphics[scale=0.7]{8inflections-L-function}
%%%
%%%\end{tabular}
%%%\end{center}
%%%\end{figure}
%%%
%%%\begin{figure}[!h]
%%%\begin{center}
%%%\begin{tabular}{cc}
%%%\includegraphics[scale=0.7]{8inflections-L-function}
%%%
%%%
%%%\end{tabular}
%%%\caption{Lyapunov spectra for maps $S_4$, $S_6$, $S_8$, with inflection points marked}
%%%\end{center}
%%%\end{figure}
%%%
%%%

\end{document}